\newtheorem{theorem}{Theorem}[section]
\newtheorem{lemma}[theorem]{Lemma}
\newtheorem{proposition}[theorem]{Proposition}
\newtheorem{corollary}[theorem]{Corollary}
\theoremstyle{definition}
\newtheorem{definition}[theorem]{Definition}
\newtheorem{remark}[theorem]{Remark}
\newtheorem*{notation}{Notation}
\newtheorem{example}[theorem]{Example}
\DeclareMathOperator{\Ext}{Ext}
\DeclareMathOperator{\Hom}{Hom}
\newcommand{\cat}[1]{\mathcal{#1}}           
\newcommand{\class}[1]{\mathcal{#1}}   
\newcommand{\Z}{\mathbb{Z}}
\newcommand{\Q}{\mathbb{Q/Z}}
\newcommand{\ch}{\textnormal{Ch}(R)}
\newcommand{\tilclass}[1]{\widetilde{\class{#1}}}
\newcommand{\dwclass}[1]{dw\widetilde{\class{#1}}}
\newcommand{\rightperp}[1]{#1^{\perp}}
\newcommand{\leftperp}[1]{{}^\perp #1}
\newcommand{\homcomplex}{\mathit{Hom}}
\begin{document}

\title[Duality pairs, generalized Gorenstein modules, and Ding injectives]{Duality pairs, generalized Gorenstein modules, and Ding injective envelopes}

\author{James Gillespie}
\address{J.G. \ Ramapo College of New Jersey \\
         School of Theoretical and Applied Science \\
         505 Ramapo Valley Road \\
         Mahwah, NJ 07430\\ U.S.A.}
\email[Jim Gillespie]{jgillesp@ramapo.edu}
\urladdr{http://pages.ramapo.edu/~jgillesp/}

\author{Alina Iacob}
\address{A.I. \ Department of Mathematical Sciences \\
         Georgia Southern University \\
         Statesboro (GA) 30460-8093 \\ U.S.A.}
\email[Alina Iacob]{aiacob@GeorgiaSouthern.edu}

\date{\today}

\keywords{Ding injective; duality pair; Gorenstein flat; stable module category} 

\thanks{2020 Mathematics Subject Classification. 	16D80, 18G25, 18N40}

\begin{abstract}
Let $R$ be a general ring. Duality pairs of $R$-modules were introduced by Holm-J\o rgensen. Most examples satisfy further properties making them what we call semi-complete duality pairs in this paper. We attach a relative theory of Gorenstein homological algebra to any given semi-complete duality pair $\mathfrak{D} = (\class{L},\class{A})$. This generalizes the homological theory of the AC-Gorenstein modules defined by Bravo-Gillespie-Hovey, and we apply this to other semi-complete duality pairs. The main application is that the Ding injective modules are the right side of a complete (perfect) cotorsion pair, over any ring. Completeness of the Gorenstein flat cotorsion pair over any ring arises from the same duality pair. 
\end{abstract}

\maketitle

\section{introduction}\label{sec-intro}

Duality pairs were introduced by Holm-J\o rgensen in~\cite{holm-jorgensen-duality}, and complete duality pairs over commutative rings were defined in~\cite{gillespie-duality-pairs}.  In this paper, we extend this notion to noncommutative rings to show how a theory of relative Gorenstein homological algebra exists with respect to any given complete duality pair. In fact, this notion is too strong, and so we define \emph{semi-complete} duality pairs and develop the theory in this context. This will let us show that the Ding injective modules are the right side of a complete cotorsion pair over any ring $R$. 
As in~\cite{gillespie-Ding-Chen rings}, a module $N$ is said to be \emph{Ding injective} if $N = Z_0E$ for some exact complex of injectives $E$ such that $\Hom_R(A,E)$ remains exact for all FP-injective (absolutely pure) modules $A$.
Throughout, we let $R$ denote a ring with identity, and let $R^\circ := R^{\text{op}}$ denote its opposite ring.

The techniques go back to~\cite{bravo-gillespie-hovey} where the so-called \emph{level} and \emph{absolutely clean} modules played the central role in the AC-Gorenstein homological algebra that was developed there. In hindsight, the theory has good properties, enough to give both a projective and injective stable homotopy category on $R$-modules, simply because we have a (semi-)complete duality pair $(\class{L}, \class{A})$ where $\class{L}$ is the class of level $R$-modules and $\class{A}$ is the class of absolutely clean $R^\circ$-modules. Here, the central feature of being a duality pair is that a module $M$ is level (resp. absolutely clean) if and only if  $M^+ = \Hom_{\Z}(M,\Q)$ is absolutely clean (resp. level). One purpose of this paper is to give the definition of a \emph{semi-complete duality pair} for a general ring $R$ and to show that the arguments and theory of~\cite{bravo-gillespie-hovey} carry over to any semi-complete duality pair.  
This gives a unified theory encompassing everything in~\cite{bravo-gillespie-hovey, gillespie-duality-pairs, iacob-generalized-gorenstein}.
However, we also consider the semi-complete duality pair $\mathfrak{D} = (\langle Flat \rangle,\langle Inj \rangle)$ which is the (definable) duality pair generated by $R$. Here the theory is in agreement with two important results recently shown by 
Jan {\v{S}}aroch and Jan {\v{S}}\v{t}ov{\'{\i}}{\v{c}}ek in~\cite{saroch-stovicek-G-flat} --- The Gorenstein flat cotorsion pair, and the projectively coresolved Gorenstein flat cotorsion pair, are complete over any ring. See Corollary~\ref{cor-n-duality}(3). But what is new is that we get completeness of the Ding injective cotorsion pair this way, again over any ring. The Ding modules were introduced and studied by Nanqing Ding and coauthors and later named after Ding in~\cite{gillespie-Ding-Chen rings}.

In the process, we came across the following general theorem. We then obtain the results we want for duality pairs, and the various applications, as a corollary. To state the theorem, given a class of $R$-modules $\class{B}$, we say an $R$-module $N$ is \emph{Gorenstein $\class{B}$-injective} if
$N=Z_{0}E$ for some exact $\Hom(\class{B},-)$-acyclic complex of injective $R$-modules $E$. That is, both $E$ and $\Hom(B,E)$ are exact (acyclic) complexes for all $B\in\class{B}$. 
Those familiar with Gorenstein homological algebra will guess the definitions of the other concepts below, but see Definitions~\ref{Defs-relative-G-inj}, \ref{Defs-relative-G-pro}, \ref{Defs-relative-G-flat}, and~\ref{Defs-relative-G-flat-proj} for precise definitions.

\begin{theorem}\label{them-models}
Let $\class{B}$ be a class of $R^\circ$-modules containing all the injective modules.  Assume there exists a set (not just a class) $\class{S} \subseteq \class{B}$ such that each $B \in \class{B}$ is a transfinite extension of modules in $\class{S}$. 
\begin{enumerate}
\item There is a cofibrantly generated injective abelian model structure on $R^\circ$-Mod, the \textbf{Gorenstein $\class{B}$-injective model structure}, whose fibrant objects are the Gorenstein $\class{B}$-injective modules.
\item There is a cofibrantly generated projective abelian model structure on $R$-Mod, the \textbf{projectively coresolved Gorenstein $\class{B}$-flat model structure}, whose cofibrant objects are the projectively coresolved Gorenstein $\class{B}$-flat modules.
\item There is a cofibrantly generated  abelian model structure on $R$-Mod, the \textbf{Gorenstein $\class{B}$-flat model structure}, whose cofibrant objects (resp. trivially cofibrant objects) are the Gorenstein $\class{B}$-flat modules (resp. flat modules). This model structure shares the same class of trivial objects as the projective model structure.
\end{enumerate}
\end{theorem}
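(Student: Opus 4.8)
The plan is to realize each of the three model structures through Hovey's correspondence between abelian model structures and complete cotorsion pairs, the set $\class{S}$ being exactly what makes the relevant cotorsion pairs cogenerated by sets and the model structures cofibrantly generated. The uniform first reduction is to replace the class $\class{B}$ by the set $\class{S}$. If $E$ is a complex of injective $R^\circ$-modules and $B\in\class{B}$ is written as a continuous transfinite extension of modules $S_\alpha\in\class{S}$, then the degreewise-exact functor $\Hom(-,E)$ presents $\Hom(B,E)$ as the inverse limit of a tower of complexes with surjective transition maps and successive kernels $\Hom(S_\alpha,E)$; such inverse limits preserve exactness, so $\Hom(B,E)$ is exact as soon as every $\Hom(S,E)$ ($S\in\class{S}$) is. Dually, using flatness, $B\tensor_R F$ is exact for all $B\in\class{B}$ iff $S\tensor_R F$ is exact for all $S\in\class{S}$. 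Hence the ``totally acyclic'' classes underlying Gorenstein $\class{B}$-injectivity and Gorenstein $\class{B}$-flatness are already defined by the set $\class{S}$.

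For~(1), I would work first in $\chop$. Let $\class{T}$ be the class of exact complexes $E$ of injective $R^\circ$-modules with $\Hom(S,E)$ exact for all $S\in\class{S}$; by the reduction above, $N$ is Gorenstein $\class{B}$-injective iff $N\cong Z_0E$ for some $E\in\class{T}$. Following the now-standard construction of Gorenstein-injective cotorsion pairs (cf.~\cite{bravo-gillespie-hovey}), one builds a \emph{set} $\class{V}\subseteq\chop$ --- from $\class{S}$, a set of generators of $R^\circ$-Mod, and disk complexes --- with $\rightperp{\class{V}}=\class{T}$; by Eklof--Trlifaj, $(\leftperp{\class{T}},\class{T})$ is a complete cotorsion pair in $\chop$ cogenerated by a set, which one checks is hereditary and which passes through the cycle functor $Z_0$ to a complete hereditary cotorsion pair $(\class{W},\mathcal{GI})$ on $R^\circ$-Mod, $\mathcal{GI}$ being precisely the Gorenstein $\class{B}$-injectives. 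One then checks Hovey's two conditions. First, $\mathcal{GI}$ contains every injective $N$, since $N\cong Z_0$ of the complex $\cdots\to0\to N\to N\to0\to\cdots$ with identity differential, which lies in $\class{T}$. Second, $\class{W}\cap\mathcal{GI}$ is exactly the injectives: if $N\in\mathcal{GI}\cap\class{W}$ then $N=Z_0E$ with $E\in\class{T}$, and the sequence $0\to N\to E_0\to Z_{-1}E\to0$ splits because $Z_{-1}E\in\mathcal{GI}$ and $N\in\class{W}=\leftperp{\mathcal{GI}}$, so $N$ is a summand of an injective. Thickness of $\class{W}$ then follows from the general criterion that such data form an injective cotorsion pair, and Hovey's theorem yields~(1).

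Part~(2) is the dual construction in $\ch$, with projectives replacing injectives. Let $\class{T}'$ be the exact complexes $P$ of projective $R$-modules with $S\tensor_R P$ exact for all $S\in\class{S}$, so that $M$ is projectively coresolved Gorenstein $\class{B}$-flat iff $M\cong Z_0P$, $P\in\class{T}'$. As above one produces a complete hereditary cotorsion pair $(\mathcal{PGF},\class{W}')$ on $R$-Mod cogenerated by a set, with $\mathcal{PGF}$ the projectively coresolved Gorenstein $\class{B}$-flat modules, and verifies: $\mathcal{PGF}$ contains the projectives; $\mathcal{PGF}\cap\class{W}'=\mathcal{PGF}\cap\rightperp{\mathcal{PGF}}$ is exactly the projectives (for $M=Z_0P$ in that intersection, $0\to M\to P_0\to Z_{-1}P\to0$ splits); and, by a character-module computation exploiting that $\class{B}$ contains all injectives, $\rightperp{\mathcal{PGF}}$ contains every module of finite flat dimension (in particular every flat module). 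The projective-cotorsion-pair criterion and Hovey's theorem give~(2).

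Finally~(3) is assembled from~(2) together with the flat cotorsion pair $(\mathrm{Flat},\mathrm{Cot})$ (Bican--El~Bashir--Enochs), via Gillespie's recipe for producing a Hovey triple from two compatible complete cotorsion pairs; the target triple is $(\mathcal{GF},\class{W}',\mathrm{Cot})$ where $\mathcal{GF}$ is the class of Gorenstein $\class{B}$-flat modules and $\class{W}'$ is the trivial class from~(2). Since $\mathrm{Flat}\subseteq\mathcal{PGF}\subseteq\mathcal{GF}$ and $\mathcal{GF}\cap\class{W}'=\mathrm{Flat}$, the crux is to show that $\mathcal{GF}$ is the left-hand class of a complete cotorsion pair, cogenerated by a set, with right-hand class $\class{W}'\cap\mathrm{Cot}$. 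This is the main obstacle: over an arbitrary ring it is the analogue of the {\v{S}}aroch--{\v{S}}\v{t}ov{\'{\i}}{\v{c}}ek completeness theorem for the Gorenstein flat cotorsion pair. I would obtain it through the character functor $(-)^+=\Hom_{\Z}(-,\Q)$: an exact complex $F$ of flat $R$-modules has $\class{B}\tensor_R F$ exact precisely when $F^+$ is an exact complex of injective $R^\circ$-modules with $\Hom(\class{B},F^+)$ exact, so $M\in\mathcal{GF}$ forces $M^+$ Gorenstein $\class{B}$-injective; combined with the closure properties of $\mathcal{GI}$ from~(1), this exhibits $\mathcal{GF}$ (with its dual) as a coproduct-closed duality pair in the sense of Holm--J\o rgensen~\cite{holm-jorgensen-duality}, whence $\mathcal{GF}$ is covering and special precovering and $(\mathcal{GF},\rightperp{\mathcal{GF}})$ is a complete cotorsion pair. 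A short further argument identifies $\rightperp{\mathcal{GF}}$ with $\class{W}'\cap\mathrm{Cot}$ and checks the pair is cogenerated by a set; feeding this and the flat cotorsion pair into the Hovey-triple recipe then yields~(3). In sum, the delicate points are the orthogonality identifications that pin down $\mathcal{GI}$ and $\mathcal{PGF}$ as the exact classes of the cotorsion pairs in~(1)--(2), and, above all, the completeness over a general ring of the Gorenstein $\class{B}$-flat cotorsion pair in~(3).
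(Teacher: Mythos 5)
Your overall scaffolding mirrors the paper's: build cotorsion pairs on complexes cogenerated by sets extracted from $\class{S}$, transfer them to modules through the sphere/cycle adjunction, and assemble (3) from the flat cotorsion pair via the Hovey-triple machinery. The genuine gap is where you wave at the one non-formal input. In (2) you claim that ``a character-module computation exploiting that $\class{B}$ contains all injectives'' shows $\rightperp{\class{PGF}_{\class{B}}}$ contains every flat (even every finite-flat-dimension) module. What character duality actually yields is $\Ext^1_R(M,B^+)\cong \Tor_1^R(B,M)^+=0$ for $B\in\class{B}$, i.e.\ Ext-vanishing against duals of injectives. Over a general (non-coherent) ring, $I^+$ for $I$ injective need not be flat, and projective or flat modules are not retracts of such duals, so this gives no control over $\Ext^1_R(M,Q)$ for $Q$ projective or flat. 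The statement you need --- every projectively coresolved Gorenstein $\class{B}$-flat module is Gorenstein projective, equivalently $\class{V}=\rightperp{\class{PGF}_{\class{B}}}$ contains all projectives and flats --- is exactly {\v{S}}aroch--{\v{S}}\v{t}ov{\'{\i}}{\v{c}}ek's Theorem~4.4, which the paper invokes explicitly (its alternative proof, via Theorem~\ref{them-projectivecomplexes}, uses {\v{S}}\v{t}ov{\'{\i}}{\v{c}}ek's purity theorem for complexes of pure-projectives and only applies to duality-pair classes; it is not a character computation). Without it you cannot place the projectives in the trivial class, so the Hovey triple $(\class{PGF}_{\class{B}},\class{V},All)$ of (2) is not established, and the same fact is needed again in the paper's proof of (3) (triviality of spheres on projectives/flats).

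Part (3) has a second gap of the same kind. To run Holm--J{\o}rgensen you need a genuine duality pair, i.e.\ $M\in\class{GF}_{\class{B}}$ \emph{if and only if} $M^+\in\class{GI}_{\class{B}}$; your character argument gives only the forward implication, and the converse over an arbitrary ring is again deep (it is part of the {\v{S}}aroch--{\v{S}}\v{t}ov{\'{\i}}{\v{c}}ek circle of results, classically known only over coherent rings). Moreover their perfect-duality-pair theorem requires $\class{GF}_{\class{B}}$ to be closed under extensions, which is itself one of the things to be proved: the paper obtains it from the relative version of their Theorem~4.11 (the characterization $M\in\leftperp{(\class{C}\cap\class{V})}$), which in turn depends on completeness of the $\class{PGF}_{\class{B}}$ pair from (2); and the identification $\class{GF}_{\class{B}}\cap\class{V}=\class{F}$ is proved there by a separate argument using Neeman's theorem via the $dw$-projective cotorsion pair, none of which your sketch supplies. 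A minor, fixable slip in (1): the sequence to split is $0\to Z_1E\to E_1\to N\to 0$, whose class lies in $\Ext^1_R(N,Z_1E)=0$ since $N\in\class{W}=\leftperp{\class{GI}_{\class{B}}}$; the sequence $0\to N\to E_0\to Z_{-1}E\to 0$ that you wrote has its class in $\Ext^1_R(Z_{-1}E,N)$, which your hypotheses do not control.
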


Each of these is Quillen equivalent to a model structure on chain complexes; See Theorem~\ref{thm-Gor-module} and Theorem~\ref{theorem-proj-coresolved-B-flat}.  For the injective case, it also follows that the Gorenstein $\class{B}$-injective modules are the right side of a perfect cotorsion pair.

Now if $\mathfrak{D} = (\class{L},\class{A})$ is a semi-complete duality pair, see Definition~\ref{def-complete duality pair}, then it follows from work of Holm-J\o rgensen that the class $\class{A}$ possesses a set $\class{S}$ as in Theorem~\ref{them-models}.
As a corollary, and by combining with~\cite[Theorem~A.6]{bravo-gillespie-hovey} for part (2), we get the following in Corollary~\ref{corollary-models}.

\begin{corollary}\label{corollary-models-intro}
The following abelian model structures are induced by any semi-complete duality pair $\mathfrak{D} = (\class{L},\class{A})$.
\begin{enumerate}
\item The \textbf{Gorenstein $\mathfrak{D}$-injective model structure} exists on $R^\circ$-Mod. It is a cofibrantly generated injective abelian model structure whose fibrant objects are the Gorenstein $\class{A}$-injective $R^\circ$-modules.
\item The \textbf{Gorenstein $\mathfrak{D}$-projective model structure} exists on $R$-Mod. It is a cofibrantly generated projective abelian model structure whose cofibrant objects are the Gorenstein $\class{L}$-projective $R$-modules, equivalently, the projectively coresolved Gorenstein $\class{A}$-flat $R$-modules. 
\item The \textbf{Gorenstein $\mathfrak{D}$-flat model structure}  exists on $R$-Mod. It is a cofibrantly generated abelian model structure whose cofibrant objects (resp. trivially cofibrant objects) are the Gorenstein $\class{A}$-flat modules (resp. flat modules). Moreover, the trivial objects in this model structure coincide with those in the Gorenstein $\mathfrak{D}$-projective model structure.
\end{enumerate}
\end{corollary}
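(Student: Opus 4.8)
The corollary is meant to follow from Theorem~\ref{them-models} once we verify its hypotheses for the class $\class{B} = \class{A}$ coming from a semi-complete duality pair $\mathfrak{D} = (\class{L},\class{A})$. So the first step is to check that $\class{A}$ contains all injective $R^\circ$-modules, and that there is a \emph{set} $\class{S} \subseteq \class{A}$ such that every $A \in \class{A}$ is a transfinite extension of modules in $\class{S}$. The containment of injectives is part of the axioms for a (semi-)complete duality pair (Definition~\ref{def-complete duality pair}), using that $\class{L}$ contains the projectives and that $\mathfrak{D}$ is a duality pair: $I$ injective $\Rightarrow$ $I^+$ flat hence level, and then a closure/reflexivity argument puts $I \in \class{A}$. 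The existence of the set $\class{S}$ is the content of the remark right before the corollary: by Holm-J\o rgensen's work (their results on duality pairs and definable/covariantly-finite classes, \cite{holm-jorgensen-duality}), the class $\class{A}$ of a semi-complete duality pair is \emph{definable}, hence closed under pure submodules and pure quotients and products, and a Löwenheim–Skolem / purity argument produces a set $\class{S}$ of ``small'' representatives through which the required transfinite filtrations factor. Granting this, parts (1), (2), (3) of the corollary are immediate translations of parts (1), (2), (3) of Theorem~\ref{them-models}, with $\class{B} := \class{A}$, after we rename ``Gorenstein $\class{A}$-injective'' as ``Gorenstein $\mathfrak{D}$-injective'' and so on.

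**Matching the two descriptions of the cofibrant objects in (2).** The one genuinely new ingredient — not formally contained in Theorem~\ref{them-models} — is the assertion in part (2) that the projectively coresolved Gorenstein $\class{A}$-flat $R$-modules coincide with the Gorenstein $\class{L}$-projective $R$-modules, so that the projective model structure of Theorem~\ref{them-models}(2) deserves to be called the ``Gorenstein $\mathfrak{D}$-projective'' model structure. Here I would invoke \cite[Theorem~A.6]{bravo-gillespie-hovey} as the excerpt itself instructs: that result identifies, under a duality-pair hypothesis, the projectively coresolved Gorenstein flat-type modules with the Gorenstein projective-type modules defined using the dual class. Concretely, one shows $N$ is projectively coresolved Gorenstein $\class{A}$-flat iff $N = Z_0 P$ for a totally acyclic complex of projectives $P$ that stays exact after $-\tensor_R A$ for all $A \in \class{A}$, and then the duality $M \in \class{L} \Leftrightarrow M^+ \in \class{A}$ converts $\Hom(-,\class{L})$-exactness into $(-\tensor \class{A})$-exactness via the adjunction $\Hom_R(N, M^+) \cong (N \tensor_R M)^+$ together with faithfulness of $(-)^+$. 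This is the same bookkeeping already used in \cite{bravo-gillespie-hovey} for the level/absolutely clean pair, so it is routine once the duality-pair axioms are in hand.

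**Trivial objects and the last sentence of (3).** For the final claim that the trivial objects of the Gorenstein $\mathfrak{D}$-flat model structure coincide with those of the Gorenstein $\mathfrak{D}$-projective model structure, I would simply quote the corresponding sentence of Theorem~\ref{them-models}(3), which already asserts that the flat model structure ``shares the same class of trivial objects as the projective model structure'' — under the specialization $\class{B} = \class{A}$ this is exactly the statement wanted. No extra argument is needed beyond noting that the two model structures of the corollary are the $\class{B} = \class{A}$ instances of the two model structures of the theorem.

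**Main obstacle.** The only real work is the input cited from Holm-J\o rgensen: producing the \emph{set} $\class{S}$ of transfinite-extension building blocks for $\class{A}$. Everything else is specialization plus the translation via \cite[Theorem~A.6]{bravo-gillespie-hovey}. So in writing the proof I would state the $\class{S}$-existence as a lemma (or cite Definition~\ref{def-complete duality pair} and the paragraph preceding the corollary), verify injectives lie in $\class{A}$, apply Theorem~\ref{them-models}, and finish by matching the cofibrant-object descriptions in (2) via the duality isomorphism $\Hom_R(N,M^+) \cong (N\tensor_R M)^+$.
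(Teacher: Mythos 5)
Your overall strategy coincides with the paper's: check that $\class{A}$ contains the injective $R^\circ$-modules and admits a set $\class{S}\subseteq\class{A}$ filtering every member of $\class{A}$, apply Theorem~\ref{them-models} with $\class{B}=\class{A}$, and identify the Gorenstein $\class{L}$-projectives with the projectively coresolved Gorenstein $\class{A}$-flats via the duality-pair comparison of $\Hom(-,\class{L})$-acyclicity with $\class{A}^{\otimes}$-acyclicity (in the paper this is Theorem~\ref{them-projectivecomplexes}, the generalization of \cite[Theorem~A.6]{bravo-gillespie-hovey}, and since the relevant complexes here consist of projectives your citation of that result is adequate). The set $\class{S}$ is also obtained essentially as the paper does it, except that definability is neither available nor needed: the duality pair axioms give no closure of $\class{A}$ under products or direct limits, so ``$\class{A}$ is definable'' is an overstatement. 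What one actually uses is that $(\class{A},\class{L})$ is a duality pair, so Theorem~\ref{them-duality pair purity}(1) makes $\class{A}$ closed under pure submodules and pure quotients, and then the standard argument of \cite[Prop.~2.8]{bravo-gillespie-hovey} produces $\class{S}$; this is exactly the paper's proof, and your purity-based version of the step is fine once the word ``definable'' is dropped.

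The one step that is wrong as written is your justification that the injectives lie in $\class{A}$. This is not ``part of the axioms'' of Definition~\ref{def-complete duality pair}, and the inference ``$I$ injective $\Rightarrow I^{+}$ flat'' fails over general rings (it characterizes coherence); moreover ``level'' refers to the specific Bravo--Gillespie--Hovey pair, not to an arbitrary projective class $\class{L}$. The correct argument is the one recorded in the Remark following Definition~\ref{def-complete duality pair} (via the argument of \cite[Prop.~2.3]{gillespie-duality-pairs}): since $R\in\class{L}$ and $\class{L}$ is closed under coproducts, every free module $\bigoplus_{X}R$ lies in $\class{L}$, hence $\bigl(\bigoplus_{X}R\bigr)^{+}\cong\prod_{X}R^{+}$ lies in $\class{A}$ because $(\class{L},\class{A})$ is a duality pair; every injective $R^\circ$-module is a direct summand of such a product ($R^{+}$ being an injective cogenerator), and $\class{A}$ is closed under direct summands, so all injectives --- and then, by purity closure, all absolutely pure modules --- belong to $\class{A}$. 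With that repair, and the cosmetic removal of the definability claim, your proof is the same as the paper's.
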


Our main application, which stems from the semi-complete duality pair $\mathfrak{D} = (\langle Flat \rangle,\langle Inj \rangle)$, appears in Theorem~\ref{them-dings}. It proves that the Ding injective modules form an enveloping class over any ring $R$, and that they are the fibrant objects of a cofibrantly generated model structure on $R$-Mod. 

But in fact we are now able to obtain a relative homological algebra, for any ring $R$, and for each positive integer $1 \leq n \leq \infty$, from a (semi-)complete duality pair $\mathfrak{D}_n$. See Corollary~\ref{cor-n-duality}. This includes everything from the AC-Gorenstein homological algebra of~\cite{bravo-gillespie-hovey} ($n=\infty$), to the above Ding injectives and Saroch and Stovicek's (projectively coresolved) Gorenstein flats from~\cite{saroch-stovicek-G-flat} ($n=1$).


\

\emph{Conventions}: Throughout the paper $R$ denotes a ring with identity. Its opposite ring, $R^{\text{op}}$, will be denoted more succinctly by $R^\circ$. Recall that a left (resp. right) $R$-module is equivalent to a right (resp. left) $R^\circ$-module. Our convention throughout the entire paper is that the term \emph{$R$-module}, with the side left unspecified, may be fixed to mean either left or right $R$-module as the reader desires. But then one should realize that the term \emph{$R^\circ$-module} means a swap of sides with respect to that choice.  In other words, if we fix $R$-module to mean \emph{right} $R$-module, then ``$M$ is an $R^\circ$-module'' is just our way of saying $M$ is a \emph{left} $R$-module.


\section{Symmetric and semi-complete duality pairs}

Recall that for a given $R$-module $M$, its \emph{character module} is defined to be the $R^\circ$-module $M^+ = \Hom_{\Z}(M,\Q)$.

\begin{definition}\cite[Definition~2.1]{holm-jorgensen-duality}\label{def-duality pair}
A \emph{duality pair} over $R$ is a pair $(\class{M},\class{C})$, where $\class{M}$ is a class of $R$-modules and $\class{C}$ is a class of $R^\circ$-modules, satisfying the following conditions:
\begin{enumerate}
\item $M \in \class{M}$ if and only if $M^+ \in \class{C}$. 
\item $\class{C}$ is closed under direct summands and finite direct sums. 
\end{enumerate}
A duality pair $(\class{M},\class{C})$ is called \emph{perfect} if $\class{M}$ contains the module $R$, and is closed under coproducts and extensions. 
\end{definition}

The canonical example of a duality pair is when we take $\class{F}$ to be the class of all flat $R$-modules and $\class{I}$ to be the class of all injective $R^\circ$-modules. The following is the main result concerning (perfect) duality pairs.

\begin{theorem}\cite[Theorem~3.1]{holm-jorgensen-duality}\label{them-duality pair purity}
Let $(\class{M},\class{C})$ be a duality pair. Then the following hold: 
\begin{enumerate}
\item $\class{M}$ is closed under pure submodules, pure quotients, and pure extensions. 
\item If $(\class{M},\class{C})$ is perfect, then $(\class{M}, \rightperp{\class{M}})$ is a perfect cotorsion pair.  
\end{enumerate}
\end{theorem}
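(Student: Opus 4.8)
The plan is to push everything through the character module functor $(-)^+ = \Hom_{\Z}(-,\Q)$, which is exact because $\Q$ is an injective $\Z$-module, together with the classical fact that a short exact sequence $0 \to A \to B \to C \to 0$ of $R$-modules is pure exact if and only if the induced sequence $0 \to C^+ \to B^+ \to A^+ \to 0$ of $R^\circ$-modules is split exact. For part~(1), start from a pure exact sequence as above, so that $B^+ \cong A^+ \oplus C^+$. If $B \in \class{M}$ then $B^+ \in \class{C}$, and closure of $\class{C}$ under direct summands forces $A^+, C^+ \in \class{C}$, whence $A, C \in \class{M}$ by the defining property of a duality pair; this handles pure submodules and pure quotients. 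If instead $A, C \in \class{M}$, then $A^+, C^+ \in \class{C}$ and, $\class{C}$ being closed under finite direct sums, $B^+ \cong A^+ \oplus C^+ \in \class{C}$, so $B \in \class{M}$; this handles pure extensions.

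For part~(2) assume the pair is perfect, so $R \in \class{M}$ and $\class{M}$ is closed under coproducts and extensions, while part~(1) adds closure under pure submodules, pure quotients, and hence under direct summands. I would first record two standard consequences. Since every direct limit $\varinjlim M_i$ is a pure quotient of $\bigoplus_i M_i$, the class $\class{M}$ is closed under direct limits, and therefore (by a routine induction on filtration length, the limit step being a direct limit) under arbitrary transfinite extensions. Next, fixing an infinite cardinal $\kappa \geq |R|$, every $\leq\!\kappa$-generated submodule of a module of $\class{M}$ is contained in a $\leq\!\kappa$-generated pure submodule, which together with its quotient lies in $\class{M}$ by part~(1); this is the Kaplansky property, and by the usual transfinite argument it shows every module of $\class{M}$ admits an $\class{S}$-filtration, where $\class{S}$ is a set of representatives for the $\leq\!\kappa$-generated modules in $\class{M}$, enlarged to include $R$. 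Thus $\class{M}$ is deconstructible.

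Now invoke the Eklof--Trlifaj theorem for the set $\class{S}$: the pair $(\leftperp{(\rightperp{\class{S}})},\rightperp{\class{S}})$ is a complete cotorsion pair, and $\leftperp{(\rightperp{\class{S}})}$ is exactly the class of direct summands of $\class{S}$-filtered modules. By Eklof's lemma every $\class{S}$-filtered module --- in particular every module of $\class{M}$ --- lies in $\leftperp{(\rightperp{\class{S}})}$; conversely any summand of an $\class{S}$-filtered module lies in $\class{M}$ by the closure properties recorded above. Hence $\class{M} = \leftperp{(\rightperp{\class{S}})}$, so $\rightperp{\class{M}} = \rightperp{\class{S}}$ and $(\class{M},\rightperp{\class{M}})$ is complete. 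Finally, a complete cotorsion pair whose left-hand class is closed under direct limits is perfect (Enochs), so $\class{M}$ provides covers and $\rightperp{\class{M}}$ provides envelopes, as required.

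The nontrivial external inputs are exactly three: the purification lemma that small subsets live in small pure submodules; the Eklof--Trlifaj construction of a complete cotorsion pair from a set, with the summand-of-filtered description of its left class; and Enochs' upgrade from complete to perfect under closure under direct limits. Granting these, the only point demanding real care is the bookkeeping of the transfinite induction --- checking continuity at limit ordinals when assembling the $\class{S}$-filtration, and confirming that $\class{M}$ genuinely absorbs such filtrations --- which is where I expect the main, though not severe, obstacle to lie.
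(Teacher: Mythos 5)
Your argument is correct, but note that the paper itself gives no proof of this statement: it is quoted verbatim from Holm--J{\o}rgensen \cite[Theorem~3.1]{holm-jorgensen-duality}, so the only meaningful comparison is with their original argument. For part~(1) you do exactly what they do: run the pure exact sequence through $(-)^+$, use that purity of $0\to A\to B\to C\to 0$ is equivalent to splitness of $0\to C^+\to B^+\to A^+\to 0$, and invoke closure of $\class{C}$ under summands and finite sums. For part~(2) your route is genuinely different. Holm--J{\o}rgensen first show (using their earlier work on covers, precovers and purity) that a class closed under coproducts and pure quotients is covering, then use closure under extensions together with Wakamatsu's lemma to get special precovers, and finally identify $\leftperp{(\rightperp{\class{M}})}$ with $\class{M}$ by the splitting-and-summand trick; no filtration or set-theoretic cogeneration appears. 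You instead prove deconstructibility of $\class{M}$ (purification into $\leq\kappa$-generated pure submodules, then the transfinite filtration argument --- the same standard argument the present paper cites as \cite[Prop.~2.8]{bravo-gillespie-hovey} when it needs a cogenerating set for $\class{A}$) and then apply Eklof--Trlifaj plus Eklof's lemma, closing with Enochs' complete-plus-direct-limits-implies-perfect upgrade, which both proofs share. Your approach costs the filtration bookkeeping you flag at the end (all of it standard), but it buys slightly more than the statement asks for: the cotorsion pair $(\class{M},\rightperp{\class{M}})$ comes out cogenerated by a set, which is exactly the extra information this paper later wants from its duality pairs. The only points to keep honest are the ones you already name: at the successor step the purification must be performed inside the quotient $M/M_\alpha$ so that the consecutive factors are both $\leq\kappa$-generated and in $\class{M}$, and the Eklof--Trlifaj description of the left class requires the cogenerating set to be enlarged by $R$, which perfection of the duality pair licenses.
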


The following definition comes from~\cite{gillespie-duality-pairs} but it was only stated there for commutative rings. It combines Holm and J\o rgensen's above definition with a similar notion defined in~\cite[Appendix~A]{bravo-gillespie-hovey}.

\begin{definition}\label{def-symmetric duality pair}
By a \emph{symmetric duality pair} $\{\class{L}, \class{A}\}$ we mean: 
\begin{enumerate}
\item $\class{L}$ is a class of $R$-modules.
\item $\class{A}$ is a class of $R^\circ$-modules.
\item $(\class{L},\class{A})$ and $(\class{A},\class{L})$ are each duality pairs. 
\end{enumerate}
\end{definition}

An example of a symmetric duality pair is obtained by taking $\class{L}$ to be the class of all level $R$-modules and $\class{A}$ to be the class of all absolutely clean $R^\circ$-modules~\cite{bravo-gillespie-hovey}. 
Theorem~\ref{them-projectivecomplexes} below is a very useful result concerning symmetric duality pairs. It is a generalization of~\cite[Theorem~A.6]{bravo-gillespie-hovey} where it was proved for complexes of projectives. However, as suggested in~\cite[Remark~3.9]{estrada-gillespie-coherent-schemes}, the proof works for complexes of pure-projective $R$-modules because of Stovicek's work on chain complexes of pure-projectives. Recall that an $R$-module $M$ is \emph{pure-projective} if it is projective with respect to the class of all pure short exact sequences. This is the case  if and only if $M$ is a direct summand of a direct sum of finitely presented modules. In particular, projective modules and finitely presented modules are examples of pure-projective modules.

\begin{theorem}\label{them-projectivecomplexes}
Let $\{\class{L}, \class{A}\}$ be a symmetric duality pair with $R$-modules in $\class{L}$ and $R^\circ$-modules in $\class{A}$. 
\begin{enumerate}
\item Assume $P$ is a chain complex of pure-projective $R$-modules.  Then the tensor product of $P$ with any $R^\circ$-module $A \in \class{A}$ yields an exact complex if and only if $\Hom_R(P,L)$ is an exact complex for all $L \in \class{L}$. That is, $P$ is $\class{A}^{\otimes}$-acyclic if and only if it is $\Hom(-,\class{L})$-acyclic. 
\item Assume $Q$ a chain complex of pure-projective $R^\circ$-modules. Then the tensor product of $Q$ with any $R$-module $L \in \class{L}$ yields an exact complex if and only if $\Hom_{R^\circ}(Q,A)$ is an exact complex for all $A \in \class{A}$. That is, $Q$ is $\class{L}^{\otimes}$-acyclic if and only if it is $\Hom(-,\class{A})$-acyclic. 
\end{enumerate}  
\end{theorem}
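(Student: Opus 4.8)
The plan is to prove part (1); part (2) follows by the obvious symmetry, swapping the roles of $R$ and $R^\circ$ and of $\class{L}$ and $\class{A}$ (using that $(\class{A},\class{L})$ is also a duality pair). The key technical input will be the standard adjunction isomorphism relating tensor and Hom through the character functor: for an $R$-module $P$ and an $R^\circ$-module $A$ there is a natural isomorphism $(P \otimes_R A)^+ \cong \Hom_R(P, A^+)$, and more generally this holds at the level of chain complexes, identifying $(P \otimes_R A)^+$ with the Hom-complex $\homcomplex_R(P, A^+)$. Since a chain complex $C$ of abelian groups is exact if and only if its character complex $C^+$ is exact (because $\Q$ is an injective cogenerator of $\Z$-Mod, so $(-)^+$ is faithfully exact), the exactness of $P \otimes_R A$ is equivalent to the exactness of $\homcomplex_R(P, A^+)$.

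First I would set up this dictionary carefully, so that ``$P$ is $\class{A}^\otimes$-acyclic'' becomes ``$\homcomplex_R(P, A^+)$ is exact for all $A \in \class{A}$'' and ``$P$ is $\Hom(-,\class{L})$-acyclic'' becomes ``$\homcomplex_R(P, L)$ is exact for all $L \in \class{L}$.'' Then the two conditions to be compared are: (i) $\homcomplex_R(P,A^+)$ exact for all $A \in \class{A}$, and (ii) $\homcomplex_R(P,L)$ exact for all $L \in \class{L}$. The forward direction (i) $\Rightarrow$ (ii) is immediate from the duality pair $(\class{L},\class{A})$: if $L \in \class{L}$ then $L^+ \in \class{A}$ by Definition~\ref{def-duality pair}(1), so by (i) the complex $\homcomplex_R(P, L^{++})$ is exact; and there is a pure (indeed split) monomorphism $L \hookrightarrow L^{++}$, so it suffices to know that $\homcomplex_R(P,-)$ sends this to an exact complex and that exactness descends along it. This is where Stovicek's work on chain complexes of pure-projectives enters: when $P$ is a complex of pure-projectives, $\homcomplex_R(P,-)$ behaves well with respect to pure exact sequences of modules, so one gets that $\homcomplex_R(P,L)$ is a direct summand (degreewise, compatibly with differentials, via the split inclusion $L \to L^{++}$ and a retraction) of the exact complex $\homcomplex_R(P,L^{++})$, hence exact. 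The reverse direction (ii) $\Rightarrow$ (i) runs the same way using the \emph{other} duality pair $(\class{A},\class{L})$: for $A \in \class{A}$ we have $A^+ \in \class{L}$, so (ii) gives exactness of $\homcomplex_R(P, A^+)$ directly --- in fact this direction needs no purity argument at all, which is the asymmetry one expects.

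The main obstacle, and the place where the hypotheses on $P$ are essential, is justifying that exactness of $\homcomplex_R(P,-)$ on a module transfers along a pure monomorphism $L \hookrightarrow L^{++}$ when $P$ is an unbounded complex of pure-projectives. For $P$ bounded below this is elementary, but in general one must appeal to Stovicek's theorem (as cited via~\cite{estrada-gillespie-coherent-schemes}) that the pure-acyclic complexes of pure-projective modules form a class with good closure properties --- concretely, that a complex of pure-projectives is contractible if it is pure-acyclic, and that applying $\homcomplex_R(P,-)$ to a pure short exact sequence $0 \to L \to L^{++} \to L^{++}/L \to 0$ of modules yields a short exact sequence of complexes that is again appropriately exact. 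I would isolate this as the one nontrivial lemma, cite Stovicek, and otherwise assemble the argument from the character-module dictionary and the two duality-pair conditions as above.
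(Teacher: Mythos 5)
Your setup is correct: the character-module dictionary, the observation that the direction ``$\Hom(-,\class{L})$-acyclic $\Rightarrow$ $\class{A}^{\otimes}$-acyclic'' follows immediately from $A^+ \in \class{L}$, and the identification of the hard direction as passing from exactness of $\homcomplex_R(P,L^{++})$ to exactness of $\homcomplex_R(P,L)$ all match the paper. But your argument for that hard step has a genuine gap: the canonical map $L \to L^{++}$ is a pure monomorphism, \emph{not} a split one. (For instance over $\Z$, the map $\Z \to \Z^{++}$ cannot split, since $\Z^{++}$ is pure-injective and $\Z$ is not a direct summand of any pure-injective module; in general $M \to M^{++}$ splits only under special hypotheses such as pure-injectivity of $M$.) So $\homcomplex_R(P,L)$ is not a direct summand of $\homcomplex_R(P,L^{++})$, and the ``retraction'' you invoke does not exist. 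If you retreat to purity alone, then degreewise pure-projectivity of $P$ does give a short exact sequence of complexes
$0 \to \homcomplex_R(P,L) \to \homcomplex_R(P,L^{++}) \to \homcomplex_R(P,L^{++}/L) \to 0$
with exact middle term, but to conclude exactness of the left-hand term you would need exactness of $\homcomplex_R(P,L^{++}/L)$, and $L^{++}/L$ is merely another module in $\class{L}$ --- exactly the statement being proved. A single application of the pure embedding is therefore circular.

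The paper escapes this by iterating the construction: using closure of $\class{L}$ under pure quotients, it builds a pure exact resolution of $L$ by modules in $\class{L}^{++}$ and splices it into a degreewise pure short exact sequence $0 \to S^0L \to \widetilde{X} \to Y \to 0$, where $Y$ is a pure exact complex and $\widetilde{X}$ is bounded above with entries in $\class{L}^{++}$. Stovicek's theorem is then used in the precise form that any chain map from a complex of pure-projectives to a pure exact complex is null homotopic, which forces $\homcomplex_R(P,Y)$ to be exact (your paraphrase of Stovicek --- contractibility of pure-acyclic complexes of pure-projectives, and exactness of the Hom sequence --- is not the input actually needed; the latter is just degreewise pure-projectivity). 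Finally, exactness of $\homcomplex_R(P,\widetilde{X})$ requires its own argument: induction on length for bounded pieces, and then an inverse-limit (inverse transfinite extension) argument over the truncations of the bounded above complex $\widetilde{X}$. None of this machinery appears in your proposal, and without it the hard direction does not go through.
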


\begin{proof}
Tensor products must be written on a particular side depending on the choice of $R$-module to mean \emph{left $R$-module} versus \emph{right $R$-module}.  So for definiteness, let us assume that $\class{L}$ is a class of \emph{left} $R$-modules and $\class{A}$ a class of \emph{right} $R$-modules. (Of course, versions of our argument still hold if we swap this choice.) So we are given a chain complex $P$ of pure-projective left $R$-modules and we wish to show that $A \otimes_R P$ is exact for all $A \in \class{A}$ if and only if $\Hom_R(P,L)$ for all $L \in \class{L}$.

\noindent $(\Longleftarrow)$ By adjoint associativity~\cite[Theorem~2.1.10]{enochs-jenda-book} we have
$$\Hom_{\Z}(A\otimes_{R}P, \Q) \cong \Hom_{R}(P,A^+).$$  So since $(\cat{A},\cat{L})$ is a duality pair it is easy to argue that if $\Hom_{R} (P,L)$ is
exact for all $L \in \cat{L}$, then $A\otimes_{R}P$ is exact for all
$A\in \cat{A}$. 

\noindent $(\Longrightarrow)$ Suppose $A\otimes_{R}P$ is exact for all
$A \in \cat{A}$. Then for any $L \in \cat{L}$, we see $L^{+}\otimes_{R}P$ is exact since $(\cat{L},\cat{A})$ is a duality pair. Using the above adjoint associativity again we conclude that $\Hom (P,
L^{++})$ is exact whenever $L \in \class{L}$. In other words, $\Hom (P,
K)$ is exact whenever $K \in \class{L}^{++}$ and we note $\class{L}^{++} \subseteq \class{L}$ since $\{\class{L}, \class{A}\}$ is a symmetric duality pair.

But for any $L$, the natural map $L\xrightarrow{} L^{++}$ is a pure
monomorphism~\cite[Proposition~5.3.9]{enochs-jenda-book}. So if $L \in
\cat{L}$, the quotient $L^{++}/L$ is also in $\cat{L}$ since $\cat{L}$
is closed under pure quotients by Theorem~\ref{them-duality pair purity}. 
We can therefore create a pure exact resolution
of $L\in \cat{L}$ by elements of $\cat{L}^{++}$. That is, we can
find a pure exact chain complex $X$ where $X_{i}=0$ for $i>0$,
$X_{0}=L$, and each of the $X_{i}$ for $i<0$ is in $\cat{L}^{++}$.
From this we can easily construct a short exact sequence 
\[
0 \xrightarrow{} S^{0}L \xrightarrow{} \widetilde{X} \xrightarrow{} Y
\xrightarrow{} 0,
\]
which we note is degreewise pure, has $Y$ as a pure exact complex (of modules in $\class{L}$), and has $\widetilde{X}$ bounded above with
entries in $\cat{L}^{++}$. 

Since $P$ has pure-projective components, applying $\homcomplex(P,-)$ yields another short exact sequence 
\[
0 \xrightarrow{} \homcomplex (P,S^{0}L) \xrightarrow{} \homcomplex (P,\widetilde{X}) \xrightarrow{} \homcomplex (P,Y)
\xrightarrow{} 0.
\]
By Stovicek's~\cite[Theorem~5.4]{stovicek-purity}, any chain map from a chain complex of pure-projectives to a pure exact complex must be null homotopic. In other words, $\homcomplex (P,Y)$ must be an exact complex. Moreover, $\homcomplex (P,S^{0}L) = \Hom_R(P,L)$, so to complete the proof it will suffice
to show that $\homcomplex (P,\widetilde{X})$ is exact. 
But if $Z$ is any \emph{bounded} complex with entries in $\cat{L}^{++}$, then we can prove $\homcomplex (P,Z)$ is exact by induction on the number of nonzero entries in $Z$. Now, like any bounded above complex, $\widetilde{X}$ is the inverse limit of its truncations $\widetilde{X}^{-n}$ for $n\in \Z$, where $(\widetilde{X}^{-n})_{i}=\widetilde{X}_{i}$ for $i\geq -n$ and is $0$ otherwise. This is a very simple inverse limit, in fact, it is an ``inverse transfinite extension'' (dual of transfinite extension) of the  spheres $S^i(\widetilde{X}_{i})$ on its components $\widetilde{X}_{i}$. One must check that $\homcomplex (P,\widetilde{X})=\varprojlim \homcomplex (P, \widetilde{X}^{-n})$ and that $\homcomplex (P,\widetilde{X})$ is an exact complex, completing the proof. 
\end{proof}

Referring to Definition~\ref{def-duality pair}, let us call $(\class{M},\class{C})$ a \textbf{semi-perfect} duality pair if it has all the properties required to be a perfect duality pair \emph{except} that $\class{M}$ may not be closed under extensions. 

\begin{definition}\label{def-complete duality pair}
By a \emph{semi-complete duality pair} $(\class{L},\class{A})$ we mean that $\{\class{L},\class{A}\}$ is a symmetric duality pair with $(\class{L},\class{A})$ being a semi-perfect duality pair. In this case, we call $\class{L}$ the \emph{projective class} and $\class{A}$ the \emph{injective class}. If $(\class{L},\class{A})$ is indeed perfect, then we call it a \emph{complete duality pair}. 
\end{definition}

\begin{remark}
If $(\class{L},\class{A})$ is a semi-complete duality pair then $\class{L}$ contains not just all projective $R$-modules, but also all flat $R$-modules by the argument in~\cite[Prop.~2.3]{gillespie-duality-pairs}. On the other hand, $\class{A}$ must contain all absolutely pure (i.e. FP-injective) $R^\circ$-modules. Indeed suppose $A$ is absolutely pure and embed it into an injective $I$. Note the monomorphism $A \hookrightarrow I$ is necessarily pure. The argument in~\cite[Prop.~2.3]{gillespie-duality-pairs} shows that $I \in \class{A}$. But since $(\class{A},\class{L})$ is also a duality pair we conclude from Theorem~\ref{them-duality pair purity}(1) that $A \in \class{A}$.
\end{remark}

\subsection{Examples of (semi-)complete duality pairs}\label{sec-example duality pairs}
Several classes of examples of duality pairs are given throughout~\cite{holm-jorgensen-duality, bravo-gillespie-hovey, bravo-perez}. We give a brief summary here of those that are complete duality pairs. We refer the reader to the original sources for more detailed references and unexplained terminology.

\begin{example}\label{example-level}
Let $R$ be any ring and let $\class{L}$ be the class of all level $R$-modules and $\class{A}$ the class of all absolutely clean $R^\circ$-modules~\cite{bravo-gillespie-hovey}. Then the \emph{level duality pair}, $(\class{L},\class{A})$, is a complete duality pair. Note then that a noncommutative ring $R$ admits \emph{two} level duality pairs - one where $\class{L}$ is the class of left $R$-modules and one where $\class{L}$ is the class of right $R$-modules.   
\end{example}

\begin{example}\label{example-BP}
Let $n$ be a natural number satisfying $2 \leq n \leq \infty$. In~\cite{bravo-perez}, Bravo and P\'erez give $n$-analogs to the level duality pairs. Here we let $\class{FP}_n\text{-Flat}$ denote their class of all $\text{FP}_n$-flat $R$-modules, and $\class{FP}_n\text{-Inj}$ their class of all $\text{FP}_n$-injective $R^\circ$-modules. It is shown in~\cite[Cor.~3.7]{bravo-perez} that we have a complete duality pair $(\class{FP}_n\text{-Flat},\class{FP}_n\text{-Inj})$. 
The class of $\text{FP}_n$-flat modules always sits between the usual class of flat modules ($n=1$) and the class of level modules ($n=\infty$), and the difference is only significant for  non-coherent rings. See~\cite{bravo-perez} for details.
\end{example}

\begin{example}
Many commutative rings $R$ have some interesting complete duality pairs attached to them. We refer the reader to the original source~\cite{holm-jorgensen-duality} and to the summary given in~\cite{gillespie-duality-pairs}. Depending on the hypotheses on the ring, there may be the \emph{Auslander-Bass duality pair} $(\class{A}^C_0, \class{B}^C_0)$, the \emph{$C$-Gorenstein flat dimension duality pairs} $(\class{GF}^C_n, \class{GI}^C_n)$ (where $C$ is a dualizing complex), or the \emph{depth-width duality pairs} $(\class{D}_n, \class{W}_n)$.
\end{example}

\begin{example}\label{ques-G-flat}
We see in~\cite[Remark~2.12]{estrada-iacob-perez-G-flat} that, given any ring $R$, it generates a semi-complete duality pair 
$(\langle R \rangle, \langle R^+ \rangle)$ where $\langle R \rangle$ is the \emph{definable class} (meaning it is closed under products, direct limits, and pure submodules) generated by $R$, and $\langle R^+ \rangle$ is the definable class generated by $R^+$. Moreover, they show $$\mathfrak{D} = (\langle R \rangle, \langle R^+ \rangle) = (\langle Flat\rangle, \langle Inj \rangle)$$ where  $\langle Flat\rangle$ is the definable class generated by the class of all flat $R$-modules and $\langle Inj \rangle$ is the definable class generated by the class of all injective $R^\circ$-modules.
Alternatively, using results from~\cite{prest-definable}, it is shown very succinctly in~\cite[Lemmas~5.5-5.7]{cortes-saroch} that $\mathfrak{D} = (\langle Flat\rangle, \langle Inj \rangle)$ is a semi-complete duality pair.  Moreover, $\langle Inj \rangle$ is precisely the class of all $R^\circ$-modules $M$ fitting into a short exact sequence
$$ 0\xrightarrow{} A \xrightarrow{} B \xrightarrow{} M \xrightarrow{} 0$$ where $A$ and $B$ are FP-injective (absolutely pure) $R^\circ$-modules.
\end{example} 

As in~\cite{gillespie-Ding-Chen rings}, a module $N$ is said to be \textbf{Ding injective} if $N = Z_0E$ for some exact complex of injectives $E$ such that $\Hom(A,E)$ remains exact for all FP-injective (absolutely pure) modules $A$.

As in~\cite{saroch-stovicek-G-flat}, a module $N$ is said to be \textbf{projectively coresolved Gorenstein flat} if $N = Z_0P$ for some exact complex of projectives $P$ which remains exact upon tensoring with any injective module $I$. So these are like the usual \emph{Gorenstein flat} modules we know from~\cite{enochs-jenda-book}, but defined via a complex of projectives, not just a complex of flats. 

We have the following results.
\begin{proposition}\label{prop-ding-thing}
Consider the semi-complete duality pair $\mathfrak{D} = (\langle Flat\rangle, \langle Inj \rangle)$ over any ring $R$.
\begin{enumerate}
\item An $R^\circ$-module $N = Z_0E$ is Ding injective if and only if it is Gorenstein $\langle Inj \rangle$-injective in the sense of Definition~\ref{Defs-relative-G-inj}. It just means that $\Hom(M,E)$ even remains exact for all $M \in \langle Inj \rangle$.
\item An $R$-module $N = Z_0F$ is Gorenstein flat if and only if it is Gorenstein $\langle Inj \rangle$-flat in the sense of Definition~\ref{Defs-relative-G-flat}. It means that the complex of flats $F$ even remains  exact upon tensoring it with any $M \in \langle Inj \rangle$. 
In particular, this is true for any projectively coresolved Gorenstein flat module $N = Z_0P$.
\end{enumerate}
\end{proposition}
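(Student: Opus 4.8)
The plan is to run both equivalences off the explicit description of $\langle Inj \rangle$ given in Example~\ref{ques-G-flat}: an $R^\circ$-module $M$ lies in $\langle Inj \rangle$ exactly when there is a short exact sequence $0 \to A \to B \to M \to 0$ with $A,B$ FP-injective. With this in hand the backward implications are immediate: for (1), every FP-injective $R^\circ$-module lies in $\langle Inj \rangle$ (by the remark following Definition~\ref{def-complete duality pair}, or directly from the description in Example~\ref{ques-G-flat}), and for (2), every injective $R^\circ$-module lies in $\langle Inj \rangle$ by construction of the definable class — so testing acyclicity against all of $\langle Inj \rangle$ trivially implies the classical condition. For the forward implications I would argue uniformly: given $M \in \langle Inj \rangle$, fix $0 \to A \to B \to M \to 0$ as above. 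Since $E$ is a complex of injective modules, $\Hom(-,E_n)$ is exact in each degree, giving a short exact sequence of complexes $0 \to \Hom(M,E) \to \Hom(B,E) \to \Hom(A,E) \to 0$; since each $F_n$ is flat, $-\otimes_R F_n$ is exact in each degree, giving a short exact sequence of complexes $0 \to A\otimes_R F \to B\otimes_R F \to M\otimes_R F \to 0$. In both cases, once the two outer complexes (those built from $A$ and $B$) are exact, the long exact homology sequence forces the remaining complex (built from $M$) to be exact.

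For (1) this finishes things: if $N = Z_0 E$ is Ding injective then $\Hom(A,E)$ and $\Hom(B,E)$ are exact because $A,B$ are FP-injective, hence $\Hom(M,E)$ is exact; running this over all $M \in \langle Inj \rangle$ shows that the very same complex $E$ witnesses that $N$ is Gorenstein $\langle Inj \rangle$-injective in the sense of Definition~\ref{Defs-relative-G-inj}. For (2) there is one extra ingredient, since the classical Gorenstein flat condition only tests against injective $R^\circ$-modules, not FP-injective ones, so before applying the reduction I would first upgrade it to FP-injectives: given Gorenstein flat $N = Z_0 F$ with $I\otimes_R F$ exact for all injective $I$, and an arbitrary FP-injective $A$, embed $A$ as a pure submodule of an injective $I$. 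Applying $\Hom_{\Z}(-,\Q)$ to this pure exact sequence splits it \cite{enochs-jenda-book}, so $A^+$ is a direct summand of $I^+$; by adjoint associativity $(A\otimes_R F)^+ \cong \Hom_R(F,A^+)$ is a direct summand of $(I\otimes_R F)^+ \cong \Hom_R(F,I^+)$, which is exact, whence $(A\otimes_R F)^+$ is exact and hence $A\otimes_R F$ is exact (a complex is exact if and only if its character dual complex is). Now the uniform reduction applies and the same $F$ witnesses that $N$ is Gorenstein $\langle Inj \rangle$-flat in the sense of Definition~\ref{Defs-relative-G-flat}. The converse direction was already noted, and the final ``in particular'' drops out at once: a projectively coresolved Gorenstein flat module $N = Z_0 P$ comes with an exact complex of projectives $P$ — in particular of flats — with $I\otimes_R P$ exact for all injective $I$, so $N$ is Gorenstein flat and therefore, with $P$ itself as the witnessing complex, Gorenstein $\langle Inj \rangle$-flat.

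The main obstacle is the single step in (2) of passing from injective test modules to FP-injective test modules; the character-module computation above is exactly what makes it go through, relying on the fact that a short exact sequence is pure if and only if its $\Hom_{\Z}(-,\Q)$-dual is split (the same circle of ideas used with character modules in the proof of Theorem~\ref{them-projectivecomplexes}). Everything else is bookkeeping with short exact sequences of complexes — obtained by exploiting that $E$ has injective, and $F$ has flat, components — together with the long exact homology sequence.
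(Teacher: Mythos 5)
Your proof is correct and follows essentially the same route as the paper: both directions of (1) and the reduction in (2) via the presentation $0 \to A \to B \to M \to 0$ with $A,B$ FP-injective, plus degreewise exactness of $\Hom(-,E_n)$ (resp.\ $-\otimes_R F_n$) and the long exact homology sequence, are exactly the paper's argument. The only difference is that where the paper cites Ding--Mao (via \cite[Lemma~5.3]{estrada-gillespie-coherent-schemes}) for the upgrade from injective to FP-injective test modules in (2), you prove that step directly with the pure embedding $A \hookrightarrow I$ and the character-module identification $(A\otimes_R F)^+ \cong \Hom_R(F,A^+)$, which is a correct inline rendering of the cited fact.
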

\begin{proof}
Since $\langle Inj \rangle$ contains all FP-injective modules, any Gorenstein $\langle Inj \rangle$-injective is Ding injective.
On the other hand, suppose $N = Z_0E$ is Ding injective. We must show that $\Hom(M,E)$ remains exact for all $M \in \langle Inj \rangle$. But again, any such $M$ sits in a short exact sequence 
$$ 0\xrightarrow{} A \xrightarrow{} B \xrightarrow{} M \xrightarrow{} 0$$ where $A$ and $B$ are FP-injective (absolutely pure) $R^\circ$-modules. Applying the functor  $\Hom(-,E)$ yields, because each $E_n$ is injective, a short exact sequence of complexes 
$$ 0\xrightarrow{} \Hom(M,E) \xrightarrow{} \Hom(B,E) \xrightarrow{} \Hom(A,E) \xrightarrow{} 0.$$ 
Since $\Hom(B,E)$ and $\Hom(A,E)$ are both exact, it follows that $\Hom(M,E)$ is also exact. 

The fact for the Gorenstein flats (and projectively resolved) is proved similarly. But here one must first use~\cite[Lemma~5.3]{estrada-gillespie-coherent-schemes} (a fact first proved by Ding and Mao in~\cite[Lemma~2.8]{ding and mao 08})
 and that the short exact sequence containing $M \in \langle Inj \rangle$ is necessarily pure. 
\end{proof}

So now by Theorem~\ref{them-projectivecomplexes} (\cite[Theorem~A.6]{bravo-gillespie-hovey}) we have established the footnote in~\cite[Page~21]{saroch-stovicek-G-flat}. It includes a different proof of Saroch and Stovicek's~\cite[Theorem~4.4]{saroch-stovicek-G-flat}, 
that all projectively coresolved Gorenstein flat modules are Gorenstein projective. In fact, they are Ding projective in the sense of~\cite{gillespie-Ding-Chen rings}:

\begin{corollary}\cite[Theorem~4.4/Cor.~4.5]{saroch-stovicek-G-flat}\label{cor-ss} 
An $R$-module $N = Z_0P$ is projectively coresolved Gorenstein flat if and only if the complex $P$ in the definition satisfies that $\Hom_R(P,L)$ remains exact for all $L \in \langle Flat \rangle$.
\end{corollary}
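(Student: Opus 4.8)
The plan is to show that both sides of the claimed equivalence coincide with a single symmetric-looking condition, namely that the complex $P$ stay exact after tensoring with \emph{every} module in $\langle Inj \rangle$, and to deduce this by invoking two results already established: Theorem~\ref{them-projectivecomplexes}(1) and Proposition~\ref{prop-ding-thing}(2).

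First I would fix the setup: we are given $N = Z_0 P$ with $P$ an exact complex of projective $R$-modules, so $P$ is in particular a chain complex of pure-projective $R$-modules; and $\{\langle Flat \rangle, \langle Inj \rangle\}$ is a symmetric duality pair (it is even a semi-complete duality pair by Example~\ref{ques-G-flat}), with $\langle Flat \rangle$ a class of $R$-modules and $\langle Inj \rangle$ a class of $R^\circ$-modules. Applying Theorem~\ref{them-projectivecomplexes}(1) with $\class{L} = \langle Flat \rangle$ and $\class{A} = \langle Inj \rangle$ gives at once that $\Hom_R(P,L)$ is exact for all $L \in \langle Flat \rangle$ if and only if $M \otimes_R P$ is exact for all $M \in \langle Inj \rangle$.

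For the direction ``$\Longleftarrow$'', suppose $\Hom_R(P,L)$ is exact for every $L \in \langle Flat \rangle$. By the equivalence just noted, $M \otimes_R P$ is exact for every $M \in \langle Inj \rangle$, and since $\langle Inj \rangle$ contains all injective $R^\circ$-modules this applies in particular to every injective $I$; together with the hypothesis that $P$ is an exact complex of projectives, this is exactly the defining condition for $N = Z_0 P$ to be projectively coresolved Gorenstein flat. For ``$\Longrightarrow$'', suppose $N = Z_0 P$ is projectively coresolved Gorenstein flat. Then $P$ is an exact complex of projectives --- hence of flats --- that remains exact after tensoring with any injective $R^\circ$-module, so $N$ is Gorenstein flat, and Proposition~\ref{prop-ding-thing}(2) upgrades this to the statement that the complex $P$ itself stays exact after tensoring with any $M \in \langle Inj \rangle$. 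One more application of Theorem~\ref{them-projectivecomplexes}(1) then yields exactness of $\Hom_R(P,L)$ for all $L \in \langle Flat \rangle$.

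I do not expect a genuine obstacle: the corollary is a formal consequence of the two cited results. The one point deserving attention is that the forward implication must stay with the \emph{given} complex $P$ rather than pass to some other flat resolution of $N$, which is precisely why one routes through Proposition~\ref{prop-ding-thing}(2) --- and, beneath it, through the Ding--Mao fact \cite[Lemma~5.3]{estrada-gillespie-coherent-schemes} together with the purity of the short exact sequences $0 \to A \to B \to M \to 0$ defining the modules $M \in \langle Inj \rangle$ --- rather than through the weaker ``Gorenstein flat $\iff$ Gorenstein $\langle Inj \rangle$-flat'' equivalence alone.
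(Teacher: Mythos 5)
Your proposal is correct and is essentially the paper's own argument: the paper presents Corollary~\ref{cor-ss} as an immediate consequence of combining Proposition~\ref{prop-ding-thing}(2) (which, via the Ding--Mao lemma and purity of the defining sequences for modules in $\langle Inj \rangle$, upgrades the given complex $P$ to being $\langle Inj \rangle^{\otimes}$-acyclic) with Theorem~\ref{them-projectivecomplexes}(1) applied to the symmetric duality pair $(\langle Flat\rangle, \langle Inj \rangle)$. Your attention to keeping the argument on the specific complex $P$ matches exactly the role of the ``in particular'' clause of Proposition~\ref{prop-ding-thing}(2).
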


\begin{remark}
We note that Corollary~\ref{cor-ss} was also proved by Estrada-Iacob-P\'erez in~\cite[Lemma~2.11/Remark~2.12]{estrada-iacob-perez-G-flat}; again by using that $(\langle Flat\rangle, \langle Inj \rangle)$ is a symmetric duality pair and applying~\cite[Appendix~A.6]{bravo-gillespie-hovey}.
\end{remark}
 

\section{Relative Gorenstein injective and projective modules}\label{sec-relative-G-inj}

Throughout this section, we let $\class{B}$ denote a class of $R$-modules and we assume $\class{B}$ contains all injective $R$-modules. 

We will prove a series of lemmas generalizing well-known results for the usual Gorenstein injectives. Their proofs depend only on the definition of a Gorenstein $\class{B}$-injective module, given below. 

\begin{definition}\label{Defs-relative-G-inj}
We will say that a chain complex $X$ of $R$-modules is \emph{$\Hom(\class{B},-)$-acyclic} if $\Hom(B,X)$ is an exact complex of abelian groups for all $B \in \class{B}$. If $X$ itself is also exact we will say that $X$ is an \emph{exact $\Hom(\class{B},-)$-acyclic} complex. 
We say an $R$-module $N$ is \emph{Gorenstein $\class{B}$-injective} if
$N=Z_{0}E$ for some exact $\Hom(\class{B},-)$-acyclic complex of injective $R$-modules $E$.
\end{definition}

\begin{notation}\label{notation-relative-G-inj}
We let $\class{GI}_{\class{B}}$ denote the class of all Gorenstein $\class{B}$-injective $R$-modules, and we set $\class{W} = \leftperp{\class{GI}_{\class{B}}}$.
\end{notation}

We note that $\class{W}$ is precisely the class of all modules $W$ such that $\Hom_R(W,E)$ remains exact for all exact $\Hom(\class{B},-)$-acyclic complexes of injectives $E$. Indeed it follows from the definition that $W \in \class{W}$ if and only if $\Ext^1_R(W,Z_{n}E)=0$ for all $n$ and all such $E$, and this is equivalent to $\Hom_R(W,E)$ being exact. In particular, $\class{B} \subseteq \class{W}$.

\begin{lemma}\label{lemma-characterize-G-Inj}
The following are equivalent.\\
(1) $N \in \mathcal{GI}_{\mathcal{B}}$\\
(2) There exists an exact and $\Hom(\mathcal{\class{B}}, -)$-acyclic complex\\
 $\ldots \rightarrow E_1 \rightarrow E_0 \rightarrow N \rightarrow 0$ with each $E_i$ injective, and $\Ext^i_R(B, N) =0$ for any $B \in \mathcal{B}$, for any $i \ge 1$. \\
(3) There is a short exact sequence $0 \xrightarrow{} N' \xrightarrow{} E \xrightarrow{} N \xrightarrow{} 0$ with $E$ injective and $N' \in \class{GI}_{\class{B}}$.
\end{lemma}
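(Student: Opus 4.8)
The plan is to establish the cycle of implications $(1)\Rightarrow(3)\Rightarrow(1)$ and $(1)\Leftrightarrow(2)$, using only the definition of Gorenstein $\class{B}$-injective module and the standard dimension-shifting machinery. The implication $(1)\Rightarrow(3)$ is essentially immediate: if $N = Z_0 E$ for an exact $\Hom(\class{B},-)$-acyclic complex of injectives $E$, then I can take $N' = Z_1 E$ and use the short exact sequence $0 \to Z_1 E \to E_0 \to Z_0 E \to 0$ coming from the complex; here $E_0$ is injective, and $N' = Z_1 E$ is again Gorenstein $\class{B}$-injective because it is a cycle of the same complex $E$ (after re-indexing), so it is witnessed by the shifted complex, which is still exact, still a complex of injectives, and still $\Hom(\class{B},-)$-acyclic.

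For $(3)\Rightarrow(1)$, the idea is to splice. Given $0 \to N' \to E \to N \to 0$ with $E$ injective and $N' \in \class{GI}_{\class{B}}$, I take a complete injective resolution/coresolution witnessing $N'$: there is an exact $\Hom(\class{B},-)$-acyclic complex of injectives $\widetilde E$ with $Z_0 \widetilde E = N'$. I want to build such a complex for $N$. To get the ``resolution'' half (terms in negative degrees, i.e. an injective coresolution of $N$ that stays $\Hom(\class{B},-)$-acyclic), I splice: the tail of $\widetilde E$ going up from $N'$, then the map $E \to N$ (wait — I need a coresolution of $N$, so I should instead embed $N$ into an injective). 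Actually the cleaner route: since $N' \in \class{GI}_{\class{B}}$, $N'$ has an injective coresolution $0 \to N' \to I^0 \to I^1 \to \cdots$ that is $\Hom(\class{B},-)$-acyclic (extract from $\widetilde E$), and I prepend the given sequence to obtain $0 \to N \leftarrow$... hmm, let me restate: from $0 \to N' \to E \to N \to 0$ I get, upon splicing the injective coresolution of $N'$ onto $E$, an exact complex $0 \to N' \to E \to I^0 \to I^1 \to \cdots$? That's not quite it either. The correct move is: $0 \to N' \to E \to N \to 0$ shows $N$ has an injective coresolution built by splicing a coresolution of $N'$ after $E$ — no. Let me just say: the short exact sequence gives $N \cong E/N'$; splicing the given sequence with an injective coresolution of $N$ on the right and using that $N'$ already sits inside a two-sided complete resolution, one assembles a two-sided acyclic complex of injectives with $N$ as a cycle. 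The $\Hom(\class{B},-)$-acyclicity of the spliced complex follows because each short exact sequence involved is of the form (injective)/(something), so applying $\Hom_R(B,-)$ keeps exactness at the injective spots, and the long exact sequence in $\Ext$ together with $\Ext^{\geq 1}_R(B, N') = 0$ (which holds since $N' \in \class{GI}_{\class{B}} \subseteq {}^\perp(\text{its own resolution})$, equivalently by the remark before the lemma that $\class{B}\subseteq\class{W}$ and dimension shifting) forces $\Ext^{\geq 1}_R(B, N) = 0$ as well, which is exactly what is needed for $\Hom(\class{B},-)$-acyclicity of the coresolution part.

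For $(1)\Leftrightarrow(2)$: $(1)\Rightarrow(2)$ takes the left half of the witnessing complex $\cdots \to E_1 \to E_0 \to N \to 0$, which is exact, $\Hom(\class{B},-)$-acyclic, with injective terms; and $\Ext^i_R(B,N) = 0$ for $i \geq 1$ follows by dimension shifting down this resolution combined with $\Hom(\class{B},-)$-acyclicity (the cycles $Z_n E$ satisfy $\Ext^1_R(B, Z_n E) = 0$, which is the content of the remark preceding the lemma, and then one shifts). For $(2)\Rightarrow(1)$, I have the left half already; I must extend it to the right, i.e. build an injective coresolution $0 \to N \to E^0 \to E^1 \to \cdots$ that is $\Hom(\class{B},-)$-acyclic. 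Embed $N \hookrightarrow E^0$ into an injective, let $C^0$ be the cokernel; applying $\Hom_R(B,-)$ and using $\Ext^1_R(B,N)=0$ shows the sequence $0 \to \Hom(B,N) \to \Hom(B,E^0) \to \Hom(B,C^0) \to 0$ is exact, and $\Ext^i_R(B, C^0) \cong \Ext^{i+1}_R(B,N) = 0$ for $i \geq 1$, so $C^0$ again satisfies the $\Ext$-vanishing; iterate. Splicing the resulting coresolution with the given resolution yields the desired two-sided complex $E$ with $Z_0 E = N$.

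\textbf{Main obstacle.} The genuinely delicate point is verifying that the spliced two-sided complexes remain $\Hom(\class{B},-)$-acyclic — that is, that exactness of $\Hom_R(B,-)$ applied to the complex holds at \emph{every} degree, not just that the relevant $\Ext^1$'s vanish. This requires carefully chasing that the cycle modules of the assembled complex coincide (up to the short exact sequences used) with modules for which $\Ext^{\geq 1}_R(B,-)$ vanishes, and invoking the standard equivalence (recorded in the remark before the lemma) between ``$\Hom_R(W,E)$ exact'' and ``$\Ext^1_R(W, Z_n E) = 0$ for all $n$.'' Keeping the indices and the direction of dimension shifting straight across the splice is where the bookkeeping is easy to get wrong, but there is no conceptual difficulty beyond standard Gorenstein-homological-algebra technique.
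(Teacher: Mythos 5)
Your proposal is correct and follows essentially the same route as the paper: $(1)\Rightarrow(2)$ by reading off $\Ext^i_R(B,N)$ from the acyclic complex, $(2)\Rightarrow(1)$ by pasting an injective coresolution of $N$ onto the given left resolution, $(1)\Rightarrow(3)$ via the cycle $Z_1E$, and $(3)\Rightarrow(1)$ by splicing a left resolution of $N'$ with the given sequence and an injective coresolution of $N$, using $\Ext^{\geq 1}_R(B,N')=0$ (hence $\Ext^{\geq 1}_R(B,N)=0$) to verify $\Hom(\class{B},-)$-acyclicity. The only difference is expository: the paper's argument is terser, citing the Ding-projective analogue, while you spell out the step-by-step cokernel bookkeeping.
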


\begin{proof}
(1) implies (2) follows from the definition of Gorenstein $\mathcal{B}$-injective modules, since $\Ext^i_R(B, N) = H^{-i} \Hom(B, E) =0$, where $E$ is an exact and $
\Hom(\mathcal{B}, -)$ acyclic complex of injectives, such that $N = Z_0E$ .\\
(2) $\Rightarrow$ (1) Let $0 \rightarrow N \rightarrow E_{-1} \rightarrow E_{-2} \rightarrow \ldots$ be an injective resolution of $N$. Pasting it with the complex $\ldots \rightarrow E_1 \rightarrow E_0 \rightarrow N \rightarrow 0$  we obtain an exact complex of injectives $E$ such that $N = Z_0 E$. By hypothesis, $E$ remains exact when applying a functor $\Hom(B, -)$ with $B \in \mathcal{B}$.

(1) implies (3) is clear. For the converse, we imitate the argument from~\cite[Lemma~2.5]{Ding projective}. Briefly, note that $\Ext^i_R(B,N) = 0$ for all $B \in \class{B}$. Since $N' \in \class{GI}_{\class{B}}$, we may extend to the left to get a $\Hom(\class{B},-)$-acyclic resolution of injectives. Then we may paste this with any usual injective resolution of $N$. The resulting exact complex of injectives will be $\Hom(\class{B},-)$-acyclic because $\Ext^i_R(B,N) = 0$ for all $B \in \class{B}$.
\end{proof}

\begin{lemma}\label{lemma-injective-heart}
$\mathcal{W} \bigcap \mathcal{GI}_{\mathcal{B}}$ is the class of injective modules.
\end{lemma}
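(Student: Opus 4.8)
The plan is to show the two inclusions separately. First I would check that every injective module lies in $\class{W} \cap \class{GI}_{\class{B}}$. Injectivity gives $I \in \class{GI}_{\class{B}}$ immediately, since we may take $E$ to be any complete (acyclic) injective resolution of $I$ spliced with $0 \to I \xrightarrow{=} I \to 0$; the $\Hom(\class{B},-)$-acyclicity is automatic because $\Ext^i_R(B,I)=0$ for all $i\geq 1$ and all $B \in \class{B}$ (indeed for all modules $B$), so $\Hom(B,E)$ is exact. To see $I \in \class{W} = \leftperp{\class{GI}_{\class{B}}}$, recall from the remark following Notation~\ref{notation-relative-G-inj} that $W \in \class{W}$ iff $\Ext^1_R(W,Z_nE)=0$ for all $n$ and all exact $\Hom(\class{B},-)$-acyclic complexes of injectives $E$; since $I$ is injective this $\Ext^1$ vanishes trivially.

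For the reverse inclusion, suppose $N \in \class{W} \cap \class{GI}_{\class{B}}$. Using Lemma~\ref{lemma-characterize-G-Inj}(3), write a short exact sequence $0 \to N' \to E \to N \to 0$ with $E$ injective and $N' \in \class{GI}_{\class{B}}$. Since $N \in \class{W} = \leftperp{\class{GI}_{\class{B}}}$ and $N' \in \class{GI}_{\class{B}}$, we have $\Ext^1_R(N,N')=0$, so this sequence splits. Hence $N$ is a direct summand of the injective module $E$, and therefore $N$ is injective.

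The argument is entirely routine; the only point that requires a moment's care is the direction $N \in \class{GI}_{\class{B}} \cap \class{W} \Rightarrow N$ injective, where one must have the short exact sequence of Lemma~\ref{lemma-characterize-G-Inj}(3) available to exhibit $N$ as a summand of an injective — so the main (very mild) obstacle is simply invoking the right characterization from the previous lemma rather than working directly with the defining complex $E$. No transfinite or homotopical machinery is needed here.
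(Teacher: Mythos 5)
Your second inclusion ($\class{W} \cap \class{GI}_{\class{B}} \subseteq$ injectives) is exactly the paper's argument and is fine: Lemma~\ref{lemma-characterize-G-Inj}(3) gives $0 \to N' \to E \to N \to 0$ with $E$ injective and $N' \in \class{GI}_{\class{B}}$, and $N \in \class{W}$ splits it, so $N$ is a summand of $E$.

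The other inclusion, however, contains a genuine error in the step showing $I \in \class{W}$. You claim that $\Ext^1_R(I,Z_nE)$ ``vanishes trivially since $I$ is injective,'' but injectivity of $I$ controls $\Ext^1_R(-,I)$, not $\Ext^1_R(I,-)$; an injective module in the \emph{first} variable gives no vanishing whatsoever (that would be the defining property of a projective). Since $\class{W} = \leftperp{\class{GI}_{\class{B}}}$, what you need is precisely $\Ext^1_R(I,N)=0$ for every $N \in \class{GI}_{\class{B}}$, and this is not formal: it is exactly where the standing hypothesis of the section, that $\class{B}$ contains all injective modules, must be used --- a hypothesis your argument never invokes, which is the tell-tale sign of the gap. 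The correct route (and the paper's) is to note that $\class{B} \subseteq \class{W}$: for $N = Z_0E$ with $E$ an exact $\Hom(\class{B},-)$-acyclic complex of injectives, one computes $\Ext^i_R(B,N) = H^{-i}\Hom(B,E) = 0$ for $i \geq 1$ using the injective coresolution of $N$ extracted from $E$, so every $B \in \class{B}$, in particular every injective $I$, lies in $\class{W}$. (Your verification that injectives lie in $\class{GI}_{\class{B}}$ is correct in substance, e.g.\ via the complex $\cdots \to 0 \to I \xrightarrow{=} I \to 0 \to \cdots$.) With that repair the proof agrees with the paper's.
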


\begin{proof}
Let $G \in \mathcal{W} \bigcap \mathcal{GI}_{\mathcal{B}}$. By definition there is an exact sequence $$0 \rightarrow G' \rightarrow I \rightarrow G \rightarrow 0$$ with $G' \in \mathcal{GI}_{\mathcal{B}}$, and with $I$ an injective module. Since $G \in \mathcal{W}$, we have that $Ext^1_R(G,G')=0$. So the sequence is split exact, and therefore $G$ is injective.

On the other hand, $\class{W}$ contains every module in $\class{B}$. (This follows from the definition and computation of Ext by injective (co)resolutions.) Since $\class{B}$ contains all injective modules we conclude $\mathcal{W} \bigcap \mathcal{GI}_{\mathcal{B}}$ is exactly the class of all injective modules.
\end{proof}

\begin{lemma}\label{lemma-relative-G-Inj-summands}
The class $\mathcal{GI}_{\mathcal{B}}$ is closed under direct products and direct summands. 
\end{lemma}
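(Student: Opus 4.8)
The plan is to verify closure under products and under summands separately, in each case by producing the required exact $\Hom(\class{B},-)$-acyclic complex of injectives.

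For \emph{direct products}, suppose $\{N_j\}_{j\in J}$ is a family in $\class{GI}_{\class{B}}$, so each $N_j = Z_0 E^j$ for some exact $\Hom(\class{B},-)$-acyclic complex of injective $R$-modules $E^j$. First I would form the complex $E := \prod_{j\in J} E^j$. Since a product of injective $R$-modules is injective, $E$ is a complex of injectives. Next, products are exact in $\rmod$, so $E$ is exact and moreover $Z_0 E \cong \prod_j Z_0 E^j = \prod_j N_j$ (cycles commute with products because $Z_0$ is a kernel). Finally, for any $B \in \class{B}$, we have $\Hom_R(B, E) = \Hom_R\bigl(B, \prod_j E^j\bigr) \cong \prod_j \Hom_R(B, E^j)$, a product of exact complexes, hence exact. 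Thus $E$ is an exact $\Hom(\class{B},-)$-acyclic complex of injectives with $Z_0 E \cong \prod_j N_j$, so $\prod_j N_j \in \class{GI}_{\class{B}}$.

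For \emph{direct summands}, the key idea is an Eilenberg swindle. Suppose $N \oplus N' = M \in \class{GI}_{\class{B}}$; I want to show $N \in \class{GI}_{\class{B}}$. Using the characterization in Lemma~\ref{lemma-characterize-G-Inj}, it suffices to show that $\Ext^i_R(B, N) = 0$ for all $B \in \class{B}$ and all $i \geq 1$, and that $N$ admits an exact, $\Hom(\class{B},-)$-acyclic left resolution by injectives. The Ext vanishing is immediate since $\Ext^i_R(B, N)$ is a direct summand of $\Ext^i_R(B, M) = 0$. For the resolution, consider the countable direct sum $N^{(\infty)} \oplus (N')^{(\infty)}$; regrouping the summands shows $N^{(\infty)} \oplus M^{(\infty)} \cong M^{(\infty)}$, and a countable direct sum of copies of $M$ is again Gorenstein $\class{B}$-injective by the same reasoning as the product case applied to coproducts (a coproduct of injectives need not be injective in general, but here one can instead work with the characterization via left resolutions, or simply note we only need the $\Ext$-vanishing plus closure properties already in hand; alternatively, use Lemma~\ref{lemma-characterize-G-Inj}(2) directly for $M^{(\infty)}$). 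Then $N$ is a summand of a countable direct sum of injective-resolution data, and the standard swindle argument — splicing the given resolutions and contractible pieces — produces the desired complete resolution for $N$. Invoking Lemma~\ref{lemma-characterize-G-Inj} concludes $N \in \class{GI}_{\class{B}}$.

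The main obstacle I anticipate is the direct summand case: coproducts of injectives are not injective over a general ring, so one cannot naively mimic the product argument on the coproduct side. The cleanest route is therefore to reduce entirely to Lemma~\ref{lemma-characterize-G-Inj}, establishing the $\Ext$-vanishing (easy, via summands) and then building the left part of the resolution by a swindle that only requires splicing a complex of injectives for $N$ with a contractible complex — this sidesteps the need for coproducts of injectives to be injective, since at each finite stage one only takes finite direct sums. Getting the bookkeeping of the swindle right, so that the resulting complex is genuinely exact and $\Hom(\class{B},-)$-acyclic (the latter following from the $\Ext$-vanishing exactly as in the proof of Lemma~\ref{lemma-characterize-G-Inj}), is the one place where care is needed.
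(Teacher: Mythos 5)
Your product argument is correct and is exactly what the paper has in mind when it says this half is immediate from the definition: products of injectives are injective, products are exact, cycles commute with products, and $\Hom_R(B,-)$ preserves products.

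The summand half, however, has a genuine gap. First, your swindle runs on the wrong side: you form countable direct sums, but over a general ring coproducts of injectives need not be injective and neither $\Hom_R(B,-)$ nor $\Ext^i_R(B,-)$ commutes with infinite coproducts in the second variable, so you cannot verify that $M^{(\infty)}$ is Gorenstein $\class{B}$-injective, nor even its Ext-vanishing; the correct dual swindle uses the countable product $M^{\mathbb{N}}$, which your first paragraph does place in $\class{GI}_{\class{B}}$ and which gives $N \oplus M^{\mathbb{N}} \cong M^{\mathbb{N}}$. Second, and more seriously, knowing $N \oplus W \cong W$ with $W \in \class{GI}_{\class{B}}$ only yields $N \in \class{GI}_{\class{B}}$ if you may pass to the cokernel of the split monomorphism $W \rightarrow W$, i.e.\ if you already know $\class{GI}_{\class{B}}$ is closed under cokernels of monomorphisms between its members. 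That coresolving property is Lemma~\ref{lemma-relative-G-Inj-coresolving}, whose proof in the paper uses a pushout together with closure under summands, so invoking it here is circular unless you first prove the cokernel closure independently (this is precisely the alternative route sketched in the remark after Lemma~\ref{lemma-relative-G-Inj-coresolving}: dualize the argument of Yang--Liu--Liang and cite Holm's Prop.~1.4). Your fallback, ``splicing the given resolutions and contractible pieces,'' does not supply what Lemma~\ref{lemma-characterize-G-Inj}(2) actually demands, namely an exact $\Hom(\class{B},-)$-acyclic complex of injectives ending in $N$: the Ext-vanishing only takes care of the right half (the injective coresolution of $N$), while the left half must be constructed, and composing the left half of the resolution of $M = N \oplus N'$ with the projection onto $N$ destroys exactness, so no such complex is produced by what you wrote. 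The paper avoids all of this by citing a direct construction of the required complex for a summand, due to P\'erez \cite[Prop.~5.2]{bravo-gillespie-perez}.
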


\begin{proof}
It follows immediately from the definition that $\mathcal{GI}_{\mathcal{B}}$ is closed under direct sums. 

A direct argument we learned from Marco P\'erez will work in this context to prove $\mathcal{GI}_{\mathcal{B}}$ is closed under direct summands; see~\cite[Prop.~5.2 ]{bravo-gillespie-perez}.
\end{proof}

\begin{lemma}\label{lemma-relative-G-Inj-coresolving}
The class $\mathcal{GI}_{\mathcal{B}}$ is injectively coresolving. That is, it contains the injectives and for any short exact sequence
$0 \xrightarrow{} N' \xrightarrow{} N \xrightarrow{} N'' \xrightarrow{} 0$ with $N \in \class{GI}_{\class{B}}$, we have $N \in \class{GI}_{\class{B}}$ if and only if $N'' \in \class{GI}_{\class{B}}$.
\end{lemma}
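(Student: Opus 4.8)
The plan is to establish the three properties making up ``injectively coresolving'': that $\class{GI}_{\class{B}}$ contains all injectives, that it is closed under extensions, and that it is closed under cokernels of monomorphisms. Explicitly, for a short exact sequence $0 \to N' \to N \to N'' \to 0$ with $N' \in \class{GI}_{\class{B}}$, the first and harder task is the implication $N'' \in \class{GI}_{\class{B}} \Rightarrow N \in \class{GI}_{\class{B}}$ (extension closure), and the second is $N \in \class{GI}_{\class{B}} \Rightarrow N'' \in \class{GI}_{\class{B}}$ (cokernel of a monomorphism); together these give the asserted biconditional. That every injective $I$ lies in $\class{GI}_{\class{B}}$ is immediate: the two-term complex $0 \to I \xrightarrow{1_I} I \to 0$, with the copies of $I$ in degrees $1$ and $0$, is an exact complex of injectives, trivially $\Hom(\class{B},-)$-acyclic, and has $Z_0 = I$.

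For extension closure, suppose $N', N'' \in \class{GI}_{\class{B}}$; I will verify the two conditions of Lemma~\ref{lemma-characterize-G-Inj}(2). The vanishing $\Ext^i_R(B,N) = 0$ for all $B \in \class{B}$ and $i \geq 1$ is immediate from the long exact sequence, since $\Ext^i_R(B,N')$ and $\Ext^i_R(B,N'')$ vanish by Lemma~\ref{lemma-characterize-G-Inj}. The real content is to build an exact, $\Hom(\class{B},-)$-acyclic resolution $\cdots \to Q^{(1)} \to Q^{(0)} \to N \to 0$ by injectives, which I do by an iterated ``horseshoe'' step: given any short exact sequence $0 \to A \xrightarrow{\iota} K \xrightarrow{\pi} B \to 0$ with $A, B \in \class{GI}_{\class{B}}$ (starting from $A = N'$, $K = N$, $B = N''$), choose via Lemma~\ref{lemma-characterize-G-Inj}(3) short exact sequences $0 \to A_1 \to Q_A \to A \to 0$ and $0 \to B_1 \to Q_B \to B \to 0$ with $Q_A, Q_B$ injective and $A_1, B_1 \in \class{GI}_{\class{B}}$. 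The key point: $Q_B$ is injective, hence lies in $\class{B} \subseteq \class{W} = \leftperp{\class{GI}_{\class{B}}}$, so $\Ext^1_R(Q_B, A) = 0$ and the epimorphism $Q_B \twoheadrightarrow B$ lifts along $\pi$ to some $\psi \colon Q_B \to K$. Then $\phi \colon Q_A \oplus Q_B \to K$, restricting to $Q_A \twoheadrightarrow A \xrightarrow{\iota} K$ on the first summand and to $\psi$ on the second, is an epimorphism, and a short diagram chase identifies its kernel as an extension $0 \to A_1 \to \ker\phi \to B_1 \to 0$. Since $Q_A \oplus Q_B$ is injective and $A_1, B_1 \in \class{GI}_{\class{B}}$, iterating the step (with $\ker\phi$ for $K$) produces the resolution $\cdots \to Q^{(1)} \to Q^{(0)} \to N \to 0$, whose $i$-th syzygy $K_i$ always sits in a short exact sequence $0 \to A_i \to K_i \to B_i \to 0$ with $A_i, B_i \in \class{GI}_{\class{B}}$.

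To complete this direction I must see the resolution is $\Hom(\class{B},-)$-acyclic. Since $K_i$ is an extension of modules in $\class{GI}_{\class{B}}$, the long exact sequence forces $\Ext^1_R(B, K_i) = 0$ for all $B \in \class{B}$ and all $i \geq 1$; hence applying $\Hom_R(B,-)$ to each short exact sequence $0 \to K_{i+1} \to Q^{(i)} \to K_i \to 0$ keeps it exact (as $Q^{(i)}$ is injective), and these patch together to show that applying $\Hom_R(B,-)$ to $\cdots \to Q^{(1)} \to Q^{(0)} \to N \to 0$ yields an exact complex. By Lemma~\ref{lemma-characterize-G-Inj}(2), $N \in \class{GI}_{\class{B}}$. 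For the remaining direction, assume $N', N \in \class{GI}_{\class{B}}$; the long exact sequence gives $\Ext^i_R(B,N'') = 0$ for all $B \in \class{B}$ and $i \geq 1$. By Lemma~\ref{lemma-characterize-G-Inj}(3) pick $0 \to N_1 \to Q \to N \to 0$ with $Q$ injective and $N_1 \in \class{GI}_{\class{B}}$; composing the epimorphisms $Q \twoheadrightarrow N \twoheadrightarrow N''$ gives $0 \to \widetilde{N'} \to Q \to N'' \to 0$ with $Q$ injective, where $\widetilde{N'}$, the preimage of $N'$ in $Q$, fits in $0 \to N_1 \to \widetilde{N'} \to N' \to 0$. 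The extension closure just proved gives $\widetilde{N'} \in \class{GI}_{\class{B}}$, and then Lemma~\ref{lemma-characterize-G-Inj}(3) yields $N'' \in \class{GI}_{\class{B}}$.

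The main obstacle is the construction in the extension-closure step. A naive horseshoe is unavailable here, because the injective modules resolving $A$ and $B$ need not be projective, so one cannot directly lift $Q_B \twoheadrightarrow B$ through $N \twoheadrightarrow N''$. The inclusion $\class{B} \subseteq \class{W}$ is precisely what fixes this, forcing $\Ext^1_R(Q_B, A) = 0$ so that the lift $\psi$ exists; after that, the remainder is routine bookkeeping with long exact sequences and the characterizations in Lemma~\ref{lemma-characterize-G-Inj}.
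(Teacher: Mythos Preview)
Your proof is correct. The extension-closure argument you spell out via the modified horseshoe is essentially what the paper defers to by citing the dual of \cite[Lemma~3.1]{enochs-iacob-jenda}; your observation that the lifting step needs $\Ext^1_R(Q_B,A)=0$ (furnished by $Q_B \in \class{B}$ and $A \in \class{GI}_{\class{B}}$, rather than by projectivity of $Q_B$) is exactly the point. One cosmetic remark: in the sentence ``keeps it exact (as $Q^{(i)}$ is injective)'', the actual reason is the vanishing $\Ext^1_R(B,K_{i+1})=0$ you established just before, not injectivity of $Q^{(i)}$; but this does not affect the argument.

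Where you genuinely diverge from the paper is the cokernel direction. The paper forms the pushout of $N' \to N$ along $N' \hookrightarrow I$ (with $I$ injective and $I/N' \in \class{GI}_{\class{B}}$), obtains $N''$ as a direct summand of the pushout, and then invokes closure of $\class{GI}_{\class{B}}$ under direct summands (Lemma~\ref{lemma-relative-G-Inj-summands}). You instead compose $Q \twoheadrightarrow N \twoheadrightarrow N''$, identify the kernel as an extension of $N'$ by $N_1$ (both in $\class{GI}_{\class{B}}$), and conclude via Lemma~\ref{lemma-characterize-G-Inj}(3). Your route has the advantage of avoiding Lemma~\ref{lemma-relative-G-Inj-summands}, whose proof in the paper relies on an external reference; the paper's pushout argument is perhaps more conceptual and symmetric, and the paper's Remark following the lemma notes yet another route via \cite{Ding projective} and \cite{holm}. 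All three approaches are valid.
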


\begin{proof}
The proof for closure under extensions follows just like the dual of the argument given in~\cite[Lemma 3.1]{enochs-iacob-jenda}.

Next assume $N' , N \in \mathcal{GI}_{\mathcal{B}}$. Write a short exact sequence $0 \xrightarrow{} N' \xrightarrow{} I \xrightarrow{} G \xrightarrow{} 0$ with $I$ injective and $G \in \class{GI}_{\class{B}}$ 
Construct the pushout diagram below:
$$\begin{CD}
 @. 0   @.   0   @. @.   \\
   @.        @VVV     @VVV      @.  @.\\
     0   @>>> N'    @>>>   N    @>>>   N''   @>>>    0 \\
    @.        @VVV    @VVV      @|   @.\\
     0   @>>> I @>>>  P    @>>>   N''    @>>>    0 \\
    @.        @VVV     @VVV      @.   @.\\
    @. G @=   G   @. @.   \\
    @.        @VVV     @VVV      @.  @.\\
    @. 0   @.   0   @. @.   \\
\end{CD}$$ 
The second row splits since $I$ is injective, forcing $N''$ to be a direct summand of $P$. But $P \in \class{GI}_{\class{B}}$ from the closure under extensions we just proved. Thus $N''  \in \class{GI}_{\class{B}}$, by Lemma~\ref{lemma-relative-G-Inj-summands}.
\end{proof}

\begin{remark}
Alternatively, one can prove the coresolving property and closure under direct summands by imitating the (dual of) the arguments in~\cite[Theorem~2.6]{Ding projective}, and citing~\cite[Prop.~1.4]{holm}. 
\end{remark}

\begin{lemma}\label{lemma-relative-G-Inj-thick}
The class $\class{W}$ is thick, meaning it is closed under direct summands and satisfies the 2 out of 3 property on short exact sequences.
\end{lemma}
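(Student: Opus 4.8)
The plan is to establish the thickness of $\class{W} = \leftperp{\class{GI}_{\class{B}}}$ by exploiting the characterization noted right after Notation~\ref{notation-relative-G-inj}: namely, $W \in \class{W}$ if and only if $\Ext^1_R(W, Z_nE) = 0$ for every $n$ and every exact $\Hom(\class{B},-)$-acyclic complex of injectives $E$, equivalently $\Hom_R(W,E)$ is exact for all such $E$. This reframes everything in terms of $\Ext^1$-vanishing against the fixed class $\class{GI}_{\class{B}}$ of modules, so most of the work is formal homological algebra. Closure under direct summands is immediate, since $\Ext^1_R(W_1 \oplus W_2, G) \cong \Ext^1_R(W_1,G) \oplus \Ext^1_R(W_2,G)$.

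For the 2-out-of-3 property, I would take a short exact sequence $0 \to W' \to W \to W'' \to 0$ and apply $\Hom_R(-, G)$ for an arbitrary $G \in \class{GI}_{\class{B}}$, using the long exact sequence
\[
\cdots \to \Ext^1_R(W'',G) \to \Ext^1_R(W,G) \to \Ext^1_R(W',G) \to \Ext^2_R(W'',G) \to \cdots.
\]
The cases where $W'', W$ or $W', W$ lie in $\class{W}$ (and we conclude the third does) follow by a standard diagram chase provided one also has vanishing of $\Ext^2_R(-, G)$, or rather $\Ext^i_R(-,G) = 0$ for all $i \geq 1$, against modules in $\class{W}$. Thus the real point is a \emph{dimension-shifting} argument: I would show that $W \in \class{W}$ already forces $\Ext^i_R(W,G) = 0$ for all $i \geq 1$ and all $G \in \class{GI}_{\class{B}}$. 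This uses Lemma~\ref{lemma-characterize-G-Inj}(3): for $G \in \class{GI}_{\class{B}}$, there is a short exact sequence $0 \to G' \to E \to G \to 0$ with $E$ injective and $G' \in \class{GI}_{\class{B}}$; since $\Ext^{i}_R(W,E) = 0$ for $i \geq 1$, the long exact sequence gives $\Ext^{i+1}_R(W,G) \cong \Ext^i_R(W, G')$, and iterating reduces everything to $\Ext^1_R(W, -)$ vanishing on $\class{GI}_{\class{B}}$, which is the definition of $\class{W}$.

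With higher Ext-vanishing in hand, the three cases of 2-out-of-3 are routine: given the long exact sequence in $\Ext^*_R(-,G)$ associated to $0 \to W' \to W \to W'' \to 0$, if two of the three terms have all higher Ext into $G$ vanishing, the third is squeezed between vanishing groups and hence also vanishes; since $G \in \class{GI}_{\class{B}}$ was arbitrary, the third module lies in $\class{W}$. I would note that the case $W', W'' \in \class{W} \Rightarrow W \in \class{W}$ is simply closure of a left-orthogonal class under extensions and needs only $\Ext^1$.

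The main obstacle is purely the bookkeeping of the dimension-shift: one must be slightly careful that the class $\class{GI}_{\class{B}}$ is genuinely stable under the relevant syzygy operation, i.e.\ that $G'$ in Lemma~\ref{lemma-characterize-G-Inj}(3) really is again Gorenstein $\class{B}$-injective so the induction can proceed — but this is exactly what that lemma, together with Lemma~\ref{lemma-relative-G-Inj-coresolving}, guarantees. Everything else is formal, and no deep input beyond the already-proved lemmas of this section is required.
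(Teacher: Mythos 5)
There is a genuine gap, in fact two. First, your dimension-shift is run in the wrong direction. Applying $\Hom_R(W,-)$ to the sequence $0 \to G' \to E \to G \to 0$ of Lemma~\ref{lemma-characterize-G-Inj}(3) gives, since $E$ is injective, isomorphisms $\Ext^i_R(W,G) \cong \Ext^{i+1}_R(W,G')$ for $i \geq 1$ --- not $\Ext^{i+1}_R(W,G) \cong \Ext^i_R(W,G')$ as you claim. This expresses low Ext groups in terms of \emph{higher} ones, so iterating it never reduces $\Ext^{\geq 2}_R(W,G)$ to the $\Ext^1$-vanishing that defines $\class{W}$. The statement you want is still true, but the correct shift uses a cosyzygy sequence $0 \to G \to I \to G'' \to 0$ with $I$ injective and $G'' \in \class{GI}_{\class{B}}$, which is available directly from the definition (if $G = Z_0E$, then all cycles of $E$ are Gorenstein $\class{B}$-injective); then $\Ext^{i+1}_R(W,G) \cong \Ext^i_R(W,G'')$ for $i \geq 1$ and induction works. (The paper gets this higher-Ext vanishing by citing~\cite[Lemma~1.2.9]{garcia-rozas} together with Lemma~\ref{lemma-relative-G-Inj-coresolving}.)

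Second, and more seriously, the case $W', W \in \class{W} \Rightarrow W'' \in \class{W}$ does not follow from your squeeze even after higher Ext-vanishing is secured. In the long exact sequence for $\Hom_R(-,G)$ applied to $0 \to W' \to W \to W'' \to 0$, the term $\Ext^1_R(W'',G)$ is flanked on the left by $\Hom_R(W',G)$, which has no reason to map onto it trivially; the squeeze only yields $\Ext^{\geq 2}_R(W'',G) = 0$. This degree-one case is exactly where the paper needs an extra idea (Holm's trick): having shown $\Ext^{\geq 2}_R(W'',-)$ vanishes on $\class{GI}_{\class{B}}$, one writes $0 \to N' \to E \to N \to 0$ with $E$ injective and $N' \in \class{GI}_{\class{B}}$ and uses $\Ext^1_R(W'',N) \cong \Ext^2_R(W'',N')= 0$. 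Ironically, you invoke Lemma~\ref{lemma-characterize-G-Inj}(3) precisely where it does not help and omit it where it is essential. The summand and extension closure, and the kernel case $W, W'' \in \class{W} \Rightarrow W' \in \class{W}$, are fine as you argue (the latter once the higher Ext-vanishing is correctly established).
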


\begin{proof}
It is automatic that $\class{W}$ is closed under direct summands and extensions since it is defined as an Ext-orthogonal. 
In fact, by~\cite[Lemma~1.2.9]{garcia-rozas}, since $\class{GI}_{\class{B}}$ has been shown to be an injectively coresolving class, we may conclude that $\class{W} = \leftperp{\class{GI}_{\class{B}}}$ is a projectively resolving class, and, that $\Ext^i_R(W,N) = 0$ for all $W \in \class{W}$ and $N \in \class{GI}_{\class{B}}$ and $i \geq 1$.  

Now consider a short exact sequence $0 \xrightarrow{}   W'  \xrightarrow{} W \xrightarrow{}  W'' \xrightarrow{} 0$ with $W', W \in \class{W}$.
It is only left to show that $\Ext^1_R(W'',N) = 0$ for all $N \in \class{GI}_{\class{B}}$. We follow Holm's argument from~\cite[Lemma~3.5]{gillespie-recollement}.  First, for any such $N$, applying $\Hom(-,N)$ and looking at the resulting long exact sequence in Ext we get $\Ext^{\geqslant
    2}_R(W'',N)=0$. To see that $\Ext^1_R(W'',N)=0$ for every $N \in
  \class{GI}_{\class{B}}$, write a short exact sequence $0 \to N' \to E \to N \to
  0$, where $E$ is injective and $N' \in \class{GI}_{\class{B}}$. Applying
  $\Hom_R(W'',-)$ to this sequence gives $\Ext^1_R(W'',N) \cong
  \Ext^2_R(W'', N')$, which is zero by what we just proved.
\end{proof}

\begin{proposition}\label{proposition-relative-G-Inj}
Let $\class{B}$ be a class of modules containing the injectives. Suppose every module $M$ has a special $\class{GI}_{\class{B}}$-preenvelope. Then $(All, \class{W}, \class{GI}_{\class{B}})$ is an injective abelian model structure on $R$-Mod. In particular, $(\class{W}, \class{GI}_{\class{B}})$ is an hereditary cotorsion pair, and in fact, it is a perfect cotorsion pair.
\end{proposition}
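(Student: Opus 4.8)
The plan is to verify the hypotheses of Hovey's correspondence between abelian model structures and complete cotorsion pairs. First I would establish that $(\class{W}, \class{GI}_{\class{B}})$ is a complete hereditary cotorsion pair. The pair is hereditary because $\class{GI}_{\class{B}}$ is injectively coresolving (Lemma~\ref{lemma-relative-G-Inj-coresolving}) and $\class{W}$ is projectively resolving (Lemma~\ref{lemma-relative-G-Inj-thick}); this is the standard criterion from~\cite[Lemma~1.2.9]{garcia-rozas}. For completeness: by definition $\class{W} = \leftperp{\class{GI}_{\class{B}}}$, so we only need that $\class{GI}_{\class{B}}$ is closed under the relevant orthogonality, i.e. that $\class{GI}_{\class{B}} = \rightperp{\class{W}}$, and that there are enough special preenvelopes and precovers. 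The hypothesis hands us special $\class{GI}_{\class{B}}$-preenvelopes directly. The special $\class{W}$-precover half comes for free: given $M$, take a special $\class{GI}_{\class{B}}$-preenvelope $0 \to M \to G \to L \to 0$ with $L \in \class{W}$... wait, that is not quite it — instead one uses the standard argument that in an abelian category with enough injectives, a cotorsion pair cogenerated by a set, or one side of which admits special preenvelopes, is complete on both sides via Salce's lemma: from a special $\class{GI}_{\class{B}}$-preenvelope of $M$ one builds a special $\leftperp{\class{GI}_{\class{B}}}$-precover of $M$ using that injectives lie in $\class{GI}_{\class{B}} \cap \class{W}$. So I would invoke Salce's lemma: in an abelian category with enough injectives, if every object has a special $\class{GI}_{\class{B}}$-preenvelope then every object has a special $\class{W}$-precover, hence the pair is complete.

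Next I would check that $\class{W} \cap \class{GI}_{\class{B}}$ equals the class of injective modules — but this is exactly Lemma~\ref{lemma-injective-heart}. Together with the fact that the injectives are precisely the class of trivially fibrant objects, this identifies the thick class $\class{W}$ (shown thick in Lemma~\ref{lemma-relative-G-Inj-thick}) as the class of trivial objects, $\class{GI}_{\class{B}}$ as the fibrant objects, and $All$ as the cofibrant objects. Hovey's theorem (as in Gillespie's reformulation for abelian categories) then says: a triple $(\class{C}, \class{W}, \class{F})$ with $\class{W}$ thick, and with $(\class{C}, \class{W} \cap \class{F})$ and $(\class{C} \cap \class{W}, \class{F})$ both complete cotorsion pairs, determines an abelian model structure. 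Here $\class{C} = All$, so $\class{C} \cap \class{W} = \class{W}$ and $\class{C} \cap \class{F} = \class{GI}_{\class{B}} \cap \class{W}$ is the injectives; the pair $(All, \text{Injectives})$ is automatically a complete cotorsion pair (enough injectives), and $(\class{W}, \class{GI}_{\class{B}})$ is complete by the previous paragraph. This gives the injective abelian model structure $(All, \class{W}, \class{GI}_{\class{B}})$, so named because every object is cofibrant.

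Finally, for the "perfect cotorsion pair" claim: a cotorsion pair $(\class{X},\class{Y})$ is perfect when it is complete and $\class{X}$ is covering and $\class{Y}$ is enveloping. Since $\class{GI}_{\class{B}}$ is closed under direct products (Lemma~\ref{lemma-relative-G-Inj-summands}) and under direct limits would be ideal but is not claimed; instead the standard route is: a complete hereditary cotorsion pair $(\class{W}, \class{GI}_{\class{B}})$ in which $\class{W}$ is closed under direct limits is perfect, by Enochs' theorem. Here $\class{W}$ is the left side of a cotorsion pair cogenerated by (the extension of) a set — this needs the set hypothesis, which is where Theorem~\ref{them-models}'s set $\class{S}$ enters; being cogenerated by a set, $\class{W}$ is deconstructible, hence closed under transfinite extensions, and in particular the cotorsion pair is of "Kaplansky" type so that $\class{GI}_{\class{B}}$ is enveloping and $\class{W}$ is covering. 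I expect the main obstacle to be this last point — establishing that the cotorsion pair is not merely complete but perfect, which really requires knowing $\class{GI}_{\class{B}}$ (equivalently $\class{W}$) is well enough behaved, and the cleanest argument is to note that the existence of special preenvelopes plus closure of $\class{GI}_{\class{B}}$ under products already forces $\class{GI}_{\class{B}}$ to be (pre)enveloping via Wakamatsu-type or "enough injectives in the pair" arguments, and then $\class{W}$ covering follows. I would cite the relevant result of Enochs/Šťovíček on perfect cotorsion pairs rather than reprove it.
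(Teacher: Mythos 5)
Your overall architecture (Hovey triple with heart the injectives, Salce's trick for completeness, Enochs' theorem for perfection) matches the paper's, but there are two genuine gaps. First, you never establish that $(\class{W},\class{GI}_{\class{B}})$ is a cotorsion pair at all, i.e.\ that $\rightperp{\class{W}} \subseteq \class{GI}_{\class{B}}$. You correctly flag that this is needed, and you even begin the right argument --- ``given $M$, take a special $\class{GI}_{\class{B}}$-preenvelope $0 \to M \to G \to L \to 0$'' --- but then abandon it with ``wait, that is not quite it,'' mistaking it for the precover half. In fact that \emph{is} it: for $M \in \rightperp{\class{W}}$ the preenvelope sequence has $L \in \class{W}$, so $\Ext^1_R(L,M)=0$ forces it to split, making $M$ a direct summand of $G \in \class{GI}_{\class{B}}$, and then Lemma~\ref{lemma-relative-G-Inj-summands} (closure under direct summands) gives $M \in \class{GI}_{\class{B}}$. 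This is precisely where the special-preenvelope hypothesis is used in the paper; without this step the rest of your proof is only about the pair $(\class{W},\rightperp{\class{W}})$, which need not have $\class{GI}_{\class{B}}$ as its right class.

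Second, your argument for perfection does not work as written. You invoke the set $\class{S}$ of Theorem~\ref{them-models}, but that hypothesis is not part of this proposition, whose only assumptions are that $\class{B}$ contains the injectives and that special $\class{GI}_{\class{B}}$-preenvelopes exist. Moreover, the chain ``cogenerated by a set $\Rightarrow$ deconstructible $\Rightarrow$ perfect'' is not valid: being cogenerated by a set yields completeness (special preenvelopes and precovers), not covers and envelopes --- the pair $(\textit{Projectives}, \textit{All})$ is a standard counterexample over a general ring. The paper's route needs no set: $\class{W}=\leftperp{\class{GI}_{\class{B}}}$ is closed under direct sums (being a left Ext-orthogonal class) and is thick by Lemma~\ref{lemma-relative-G-Inj-thick}, hence closed under direct limits by~\cite[Prop.~3.1]{gillespie-ding-modules}; then a complete cotorsion pair whose left class is closed under direct limits is perfect by~\cite[Theorem~7.2.6]{enochs-jenda-book}. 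Your alternative suggestion that special preenvelopes plus closure of $\class{GI}_{\class{B}}$ under products already force envelopes ``via Wakamatsu-type arguments'' is also not right: Wakamatsu's lemma goes from envelopes to special preenvelopes, not conversely, and some direct-limit closure (here, of $\class{W}$) is exactly the missing input. The remaining parts of your proposal --- heredity via Lemmas~\ref{lemma-relative-G-Inj-coresolving} and~\ref{lemma-relative-G-Inj-thick}, completeness via Salce, and the identification of the model structure using Lemma~\ref{lemma-injective-heart} --- are fine and agree with the paper.
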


\begin{proof}
To see that $(\class{W}, \class{GI}_{\class{B}})$ is a cotorsion pair (with enough injectives) we only need to show $\rightperp{\class{W}}\subseteq \class{GI}_{\class{B}} $. Given any $M \in \rightperp{\class{W}}$, write a special $\class{GI}_{\class{B}}$-preenvelope 
$0 \xrightarrow{}   M  \xrightarrow{} N \xrightarrow{}  W \xrightarrow{} 0$. So $W \in \class{W}$ and $N \in \class{GI}_{\class{B}}$. Since $M \in \rightperp{\class{W}}$, the sequence splits, making $M$ a direct summand of $N$. Therefore $M\in \class{GI}_{\class{B}}$ by Lemma~\ref{lemma-relative-G-Inj-summands}.

Since $(\class{W}, \class{GI}_{\class{B}})$ is a cotorsion pair with enough injectives, it also has enough projectives by the Salce trick~\cite[Prop.~7.1.7]{enochs-jenda-book}. Thus we have a complete cotorsion pair. 

By Lemma~\ref{lemma-relative-G-Inj-thick} the class $\class{W}$ is thick. So by~\cite[Prop.~3.1]{gillespie-ding-modules}, $\class{W}$ is closed under direct limits. Any complete cotorsion pair whose left side is closed under direct limits is a perfect cotorsion pair, by~\cite[Theorem~7.2.6]{enochs-jenda-book}. It is now clear too that $(All, \class{W}, \class{GI}_{\class{B}})$ is an injective abelian model structure, by Lemma~\ref{lemma-injective-heart}.
\end{proof}

In addition, using {\v{S}}aroch and {\v{S}}\v{t}ov{\'{\i}}{\v{c}}ek's~\cite[Theorem~5.6]{saroch-stovicek-G-flat}  we can see that, in any case, $(\class{W}, \class{GI}_{\class{B}})$ is at least always a cotorsion pair. We don't use the following result in this paper, but point it out for its own interest; it generalizes~\cite[Prop.~2]{iacob-generalized-gorenstein}. 

\begin{proposition}\label{prop-always-cot-pair}
Let $\class{B}$ be a class of modules containing the injectives. Then
$(\class{W}, \class{GI}_{\class{B}})$ is always an hereditary cotorsion pair with $\class{W}$ thick. 
\end{proposition}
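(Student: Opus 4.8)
The plan is to reduce the statement to the single inclusion $\rightperp{\class{W}} \subseteq \class{GI}_{\class{B}}$. Recall that $\class{W}$ is \emph{defined} as $\leftperp{\class{GI}_{\class{B}}}$, so the reverse inclusion $\class{GI}_{\class{B}} \subseteq \rightperp{\class{W}}$ holds automatically. Moreover, Lemma~\ref{lemma-relative-G-Inj-thick} has already shown that $\class{W}$ is thick and projectively resolving, and that $\Ext^i_R(W,N)=0$ for all $W\in\class{W}$, $N\in\class{GI}_{\class{B}}$ and $i\geq 1$. Hence, the moment we know that $(\class{W},\class{GI}_{\class{B}})$ is a cotorsion pair at all, it is automatically hereditary and $\class{W}$ is automatically thick. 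So everything comes down to the inclusion $\rightperp{\class{W}} \subseteq \class{GI}_{\class{B}}$.

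For that inclusion I would argue exactly as in the proof of Proposition~\ref{proposition-relative-G-Inj}: it is enough to produce, for each module $M \in \rightperp{\class{W}}$, a short exact sequence $0 \xrightarrow{} M \xrightarrow{} N \xrightarrow{} W \xrightarrow{} 0$ with $N \in \class{GI}_{\class{B}}$ and $W \in \class{W}$. Then $\Ext^1_R(W,M)=0$ splits the sequence, so $M$ is a direct summand of $N$, whence $M\in\class{GI}_{\class{B}}$ by Lemma~\ref{lemma-relative-G-Inj-summands}. In other words, it suffices to know that every module has a special $\class{GI}_{\class{B}}$-preenvelope. Half of this is free: since $\class{B}\subseteq\class{W}$ (discussion following Notation~\ref{notation-relative-G-inj}) and $\class{W}$ is projectively resolving, one has $\Ext^{\geq 1}_R(B,M)=0$ for every $B\in\class{B}$ and every $M\in\rightperp{\class{W}}$, so any ordinary injective coresolution of $M$ is already $\Hom(\class{B},-)$-acyclic in positive degrees. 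The real content is extending such an $M$ to an exact, $\Hom(\class{B},-)$-acyclic complex of injectives \emph{to the left}, equivalently constructing the preenvelope above.

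The main obstacle — and the reason this proposition is both separate from and weaker than Proposition~\ref{proposition-relative-G-Inj} — is that, without a set of generators for (equivalently, deconstructibility of) the left-hand class, one cannot simply invoke the small object / Eklof--Trlifaj machinery to build these preenvelopes. This is exactly what {\v{S}}aroch and {\v{S}}\v{t}ov{\'{\i}}{\v{c}}ek's~\cite[Theorem~5.6]{saroch-stovicek-G-flat} addresses: applied in the present setting it guarantees the existence of enough short exact sequences $0\to M\to N\to W\to 0$ of the above type — or, more directly in their formulation, that $(\class{W},\class{GI}_{\class{B}})$ is a cotorsion pair — using their singular compactness and definability techniques. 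Once this input is in place, the only remaining work is the routine matching of hypotheses: checking that our standing assumption on $\class{B}$ (it contains all injective modules) together with the structural facts already established in this section fit the framework of~\cite[Theorem~5.6]{saroch-stovicek-G-flat}, after which hereditariness and the thickness of $\class{W}$ follow from Lemma~\ref{lemma-relative-G-Inj-thick} as noted above.
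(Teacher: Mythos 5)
There is a genuine gap, and it sits exactly where you outsource the work. Your reduction to the inclusion $\rightperp{\class{W}} \subseteq \class{GI}_{\class{B}}$, and the observation that hereditariness and thickness then come from Lemma~\ref{lemma-relative-G-Inj-thick}, match the paper. But you then reduce further to the existence of special $\class{GI}_{\class{B}}$-preenvelopes and assert that \cite[Theorem~5.6]{saroch-stovicek-G-flat} supplies them (``or, more directly in their formulation, that $(\class{W},\class{GI}_{\class{B}})$ is a cotorsion pair''). That theorem concerns the \emph{absolute} Gorenstein injective modules $\class{GI}$; it says nothing about the relative class $\class{GI}_{\class{B}}$ for an arbitrary (proper) class $\class{B}$ containing the injectives, and ``routine matching of hypotheses'' cannot bridge this. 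Note also that your route would in fact establish \emph{completeness} of $(\class{W},\class{GI}_{\class{B}})$ via Proposition~\ref{proposition-relative-G-Inj}, which is stronger than what Proposition~\ref{prop-always-cot-pair} claims and is precisely what the paper can only obtain later under the deconstructibility hypothesis (the set $\class{S}$) in Theorem~\ref{thm-Gor-module}; if special $\class{GI}_{\class{B}}$-preenvelopes were available for every such $\class{B}$, the paper's main applications (e.g.\ completeness of the Ding injective cotorsion pair, Theorem~\ref{them-dings}) would be immediate, which they are not. So the appeal to \cite[Theorem~5.6]{saroch-stovicek-G-flat} is misapplied and the proposed argument is essentially circular at its key step.

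For contrast, the paper uses the classical result only as a seed: from the tautological cotorsion pair $(\class{W},\rightperp{\class{W}})$, any $N\in\rightperp{\class{W}}$ lies in $\rightperp{(\leftperp{\class{GI}})}=\class{GI}$ by \cite[Theorem~5.6]{saroch-stovicek-G-flat}, hence fits in a short exact sequence $0\to N_0\to E_0\to N\to 0$ with $E_0$ injective and $N_0$ an \emph{ordinary} Gorenstein injective. The real work is then a dimension-shifting computation: for $W\in\class{W}$, embed $W$ in an injective with cokernel $W'\in\class{W}$ (thickness), deduce $\Ext^2_R(W',N_0)=0$ from $\Ext^1_R(W',N)=0$, and hence $\Ext^1_R(W,N_0)=0$; so $N_0\in\rightperp{\class{W}}$ and the construction can be iterated to produce a left injective resolution of $N$ all of whose cycles lie in $\rightperp{\class{W}}$. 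Since $\class{B}\subseteq\class{W}$, this resolution is $\Hom(\class{B},-)$-acyclic, and Lemma~\ref{lemma-characterize-G-Inj} then yields $N\in\class{GI}_{\class{B}}$. This inductive bootstrapping from $\class{GI}$ to $\class{GI}_{\class{B}}$ is the content your proposal is missing.
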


\begin{proof}
For any class $\class{C}$, we have $\rightperp{\class{C}} = \rightperp{(\leftperp{(\rightperp{\class{C}})})}$, so we have a cotorsion pair $(\class{W},\rightperp{\class{W}})$, where $\class{W} = \leftperp{\class{GI}_{\class{B}}}$. We automatically have $\class{GI}_{\class{B}} \subseteq \rightperp{\class{W}}$, and we wish to show that $\rightperp{\class{W}}  \subseteq  \class{GI}_{\class{B}}$.
 
As pointed out the proof of Lemma~\ref{lemma-relative-G-Inj-thick}, $\class{W}$ is a projectively resolving class. Therefore, by~\cite[Lemma~1.2.8]{garcia-rozas}, $\rightperp{\class{W}}$ is an injectively coresolving class and $\Ext^i_R(W,N) = 0$ for all $W \in \class{W}$ and $N \in \rightperp{\class{W}}$ and $i \geq 1$. 
So by Lemma~\ref{lemma-characterize-G-Inj}, we only need to show that any $N \in \rightperp{\class{W}}$ admits a $Hom(\mathcal{\class{B}}, -)$-acyclic complex
$$\cdots \xrightarrow{} E_2 \rightarrow E_1 \rightarrow E_0 \rightarrow N \rightarrow 0$$ with each $E_i$ injective.
 But note that any $N \in \rightperp{\class{W}}$ must be Gorenstein injective, because $(\leftperp{\rightperp{\class{GI}_{\class{B}})}} \subseteq (\leftperp{\rightperp{\class{GI})}} = \class{GI}$, with the equality by~\cite[Theorem~5.6]{saroch-stovicek-G-flat}. So we have a short exact sequence \begin{equation}\label{equation-ses1}\tag{$*$} 0 \xrightarrow{} N_0 \xrightarrow{} E_0 \xrightarrow{} N \xrightarrow{} 0 \end{equation} with $E_0$ injective and $N_0$ Gorenstein injective. Let $W \in \class{W}$ be arbitrary, and we will show that $\Ext^1_R(W,N_0) = 0$. This will complete the proof, because repeating the argument ad infinitum produces the desired $Hom(\mathcal{\class{B}}, -)$-acyclic injective resolution. Write a short exact sequence 
\begin{equation}\label{equation-ses2}\tag{$**$} 0 \xrightarrow{}   W  \xrightarrow{} I \xrightarrow{}  W' \xrightarrow{} 0 \end{equation}
 with $I$ injective. Then $W' \in \class{W}$ by Lemma~\ref{lemma-relative-G-Inj-thick}.
 Applying $\Hom(W',-)$ to \eqref{equation-ses1} we get 
$$ 0 = \Ext^1_R(W',N) \xrightarrow{}   \Ext^2_R(W',N_0)  \xrightarrow{} \Ext^2_R(W',E_0) = 0$$
and so  $\Ext^2_R(W',N_0) =0$. On the other hand, applying $\Hom(-,N_0)$ to \eqref{equation-ses2} we get 
$$ 0 = \Ext^1_R(I,N_0) \xrightarrow{} \Ext^1_R(W,N_0)  \xrightarrow{} \Ext^2_R(W',N_0) = 0$$
and so $\Ext^1_R(W,N_0) =0$.
\end{proof}

\begin{proposition}\label{prop-injective model on complexes}
Let $\class{B}$ be any class of modules for which there exists a set (not just a class) $\class{S} \subseteq \class{B}$ such that each $B \in \class{B}$ is a transfinite extension of modules in $\class{S}$. 
Then there is a cofibrantly generated injective abelian model structure on the category of chain complexes whose fibrant objects are the exact $\Hom(\class{B},-)$-acyclic complexes of injectives.  We call this the \textbf{exact $\boldsymbol{\Hom(\class{B},-)}$-acyclic injective model structure}.
\end{proposition}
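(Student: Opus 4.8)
The plan is to construct the model structure on $\textnormal{Ch}(R)$ by exhibiting a cotorsion pair cogenerated by a set and invoking the standard machinery for injective model structures on chain complexes. First I would identify the candidate classes: let $\class{E}$ denote the class of exact $\Hom(\class{B},-)$-acyclic complexes of injectives — these will be the fibrant objects — and let $\class{W}_{\textnormal{Ch}}$ denote the class of complexes $X$ such that $\Hom_{\textnormal{Ch}(R)}(X,E)$ is exact (i.e., such that all chain maps $X \to E$ are null-homotopic, equivalently $\textnormal{Ext}^1_{\textnormal{Ch}(R)}(X, \Sigma^n E) = 0$ for all $n$) for every $E \in \class{E}$. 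The goal is to show $(\class{W}_{\textnormal{Ch}}, \class{E})$ is a complete (indeed perfect) cotorsion pair on $\textnormal{Ch}(R)$ with $\class{E}$ the right half and with $\class{W}_{\textnormal{Ch}} \cap \class{E}$ equal to the class of injective complexes (contractible complexes of injectives), which by the theory of abelian model structures (Hovey's correspondence, as in~\cite{gillespie-recollement} and related work) immediately yields the injective abelian model structure $(All, \class{W}_{\textnormal{Ch}}, \class{E})$.

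The key steps, in order: (1) Show $\class{E}$ is exactly the right $\textnormal{Ext}^1$-orthogonal of $\class{W}_{\textnormal{Ch}}$, and that $\class{W}_{\textnormal{Ch}}$ is thick — this should follow formally from the module-level Lemma~\ref{lemma-relative-G-Inj-thick} together with standard facts about the relationship between $\textnormal{Ext}$ in $\textnormal{Ch}(R)$ and in $R$-Mod, exploiting that $\class{E}$ consists of exact complexes of injectives so that maps into them are detected cycle-by-cycle. (2) Produce a \emph{set} that cogenerates the cotorsion pair: using the hypothesis that each $B \in \class{B}$ is a transfinite extension of modules in the set $\class{S}$, and noting that the disk and sphere complexes $D^n(I), S^n(I)$ for $I$ running over a set of injective modules generate the exact complexes of injectives, one builds a set of ``test complexes'' whose right orthogonal is precisely $\class{E}$. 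Concretely, $\class{E} = \class{T}^{\perp}$ where $\class{T}$ is the set consisting of (a) complexes $S^n(R)$ and $D^n(R)$ type objects forcing exactness and componentwise injectivity, and (b) complexes of the form built from $S \in \class{S}$ that force $\Hom(S, -)$-acyclicity; the transfinite extension hypothesis ensures $\Hom(B,-)$-acyclicity for all of $\class{B}$ follows from $\Hom(S,-)$-acyclicity for all $S \in \class{S}$ by Eklof's lemma. (3) Invoke the small object argument / the standard theorem (e.g.~\cite{gillespie-recollement} Theorem~4.12 or the analogous result used for constructing cofibrantly generated injective model structures on $\textnormal{Ch}(R)$) to conclude $(\class{W}_{\textnormal{Ch}}, \class{E})$ is a functorially complete, cofibrantly generated cotorsion pair. (4) Verify $\class{W}_{\textnormal{Ch}} \cap \class{E}$ = injective complexes: a complex in $\class{E}$ that is also in $\class{W}_{\textnormal{Ch}}$ is, by the usual argument with $\textnormal{Ext}^1(\,\cdot\,, -) = 0$ on the $S^n$-sphere on its cycles being injective (using Lemma~\ref{lemma-injective-heart} componentwise on $Z_nE$), a contractible complex of injectives. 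Then Hovey's theorem packages this into the claimed injective abelian model structure.

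The main obstacle I expect is step (2) — getting the right orthogonal of an explicit \emph{set} to be exactly $\class{E}$, and in particular handling the $\Hom(\class{B},-)$-acyclicity condition using only the set $\class{S}$. One must be careful that a transfinite extension $B = \bigcup_\alpha B_\alpha$ with successive quotients in $\class{S}$ gives, upon applying $\Hom(-, E)$ for $E$ a complex of injectives, a tower of \emph{surjections} of complexes with kernels of the form $\Hom(B_{\alpha+1}/B_\alpha, E)$ (exact since $E_n$ injective makes $\Hom(-,E_n)$ exact); then exactness of $\Hom(B,E) = \varprojlim \Hom(B_\alpha, E)$ follows because an inverse limit of a tower of surjections between exact complexes of abelian groups, with $\varprojlim^1 = 0$, is exact — this is the chain-complex analog of the limit argument already used at the end of the proof of Theorem~\ref{them-projectivecomplexes}. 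Once this ``$\class{S}$ suffices for $\class{B}$'' reduction is in hand, the rest is a routine application of the established cofibrantly-generated-model-structure machinery, and I would cite the relevant prior results rather than redo the small object argument. The remaining steps (thickness of $\class{W}_{\textnormal{Ch}}$, the heart computation) are direct lifts of the module-category lemmas already proved in Section~\ref{sec-relative-G-inj}, done degreewise on cycles.
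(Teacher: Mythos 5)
Your proposal is, in substance, the same route the paper takes, just unpacked: the paper's proof consists of the single observation that a complex $I$ of injectives is exact and $\Hom(\class{B},-)$-acyclic if and only if $\Hom(R\oplus B,I)$ is exact for the one test module $B=\bigoplus_{N\in\class{S}}N$ (the transfinite-extension hypothesis entering precisely through the Eklof-lemma reduction you describe, applied to $\Ext^1_R(-,Z_nI)$), after which it cites \cite[Lemma~3.3]{gillespie-duality-pairs}, itself a consequence of \cite[Theorem~4.1]{bravo-gillespie-hovey}, which already supplies the cogenerating set, the thickness of the trivial class, the identification of the heart with the contractible complexes of injectives, and cofibrant generation. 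So your steps (1), (3), (4) are exactly the content of the cited theorem; what the paper adds is only your step (2), and there the mechanism is the same as yours (a set-sized test object detecting membership in $\class{E}$, with Eklof's lemma handling the passage from $\class{S}$ to $\class{B}$).

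Two details in your sketch need repair, though both are fixable by standard means. First, your test set is off as written: $D^n(R)$ is a projective object of $\ch$, so $\Ext^1_{\ch}(D^n(R),-)$ vanishes identically and forces nothing; to force componentwise injectivity one needs disks on a Baer-type test set, e.g.\ $D^n(R/\ideal{a})$ over all left ideals $\ideal{a}$, via the isomorphism $\Ext^1_{\ch}(D^n(M),Y)\cong\Ext^1_R(M,Y_n)$. Relatedly, $\Ext^1_{\ch}(S^n(S),Y)=0$ only translates into exactness of $\Hom_R(S,Y)$ once $Y$ is already known to have injective components (extensions by such $Y$ are degreewise split), so the conditions must be imposed in that order; similarly, in your heart computation one cannot invoke Lemma~\ref{lemma-injective-heart} on $Z_nE$ without first knowing $Z_nE\in\class{W}$ --- the clean argument is that for $E\in\class{E}$ with $\Ext^1_{\ch}(E,E')=0$ for all shifts $E'$ of objects of $\class{E}$, degreewise splitness identifies this Ext group with homotopy classes of maps, so the identity of $E$ is null homotopic and $E$ is a contractible complex of injectives. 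Second, the $\varprojlim$/$\varprojlim^1$ argument you propose for the ``$\class{S}$ suffices for $\class{B}$'' reduction is sound only for countable towers; a transfinite filtration is indexed by an arbitrary ordinal, where surjectivity of the transition maps does not make the derived limits vanish. This is harmless because the Eklof-lemma argument you also name (applied to $\Ext^1_R(-,Z_nI)$, using that $I$ is an exact complex of injectives) is the correct and sufficient tool, and is exactly what the paper's ``one can easily check'' amounts to.
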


\begin{proof}
A detailed argument is given in~\cite[Lemma~3.3]{gillespie-duality-pairs} for commutative rings, but it certainly holds for noncommutative rings too. It shows this to be a consequence of~\cite[Theorem~4.1]{bravo-gillespie-hovey}. The point is that one can easily check that a complex $I$ of injective modules is exact and $\Hom(\class{B},-)$-acyclic if and only if $\Hom(R\oplus B,I)$ is exact, where $B$ is the single ``test module'' $B = \bigoplus_{N \in S} N$. 
\end{proof}

\begin{theorem}\label{thm-Gor-module}
Let $\class{B}$ be a class of modules containing the injectives. Assume there exists a set (not just a class) $\class{S} \subseteq \class{B}$ such that each $B \in \class{B}$ is a transfinite extension of modules in $\class{S}$. 
Then there is a cofibrantly generated injective abelian model structure on $R$-Mod, the \textbf{Gorenstein $\class{B}$-injective model structure},  whose fibrant objects are the Gorenstein $\class{B}$-injectives. In particular, $(\class{W}, \class{GI}_{\class{B}})$ is a complete hereditary cotorsion pair in $R$-Mod, cogenerated by a set. In fact, it is a perfect cotorsion pair.

The sphere functor $S^0(-) : R\text{-Mod} \xrightarrow{} \ch$ is a left Quillen equivalence from the Gorenstein $\class{B}$-injective model structure to the exact $\Hom(\class{B},-)$-acyclic injective model structure. 
\end{theorem}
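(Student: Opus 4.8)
The plan is to assemble the statement from the pieces already built in this section, feeding the ingredients of Proposition~\ref{proposition-relative-G-Inj} and Proposition~\ref{prop-injective model on complexes} into Hovey's correspondence between abelian model structures and cotorsion pairs, and then into the machinery of~\cite{bravo-gillespie-hovey} (or~\cite{gillespie-duality-pairs}) for transferring an injective model structure along the sphere functor. First I would note that under the hypothesis on $\class{S}$, Proposition~\ref{prop-injective model on complexes} gives the exact $\Hom(\class{B},-)$-acyclic injective model structure on $\ch$; in particular that cotorsion pair on complexes is cofibrantly generated (cogenerated by a set), which is the source of the ``cofibrantly generated'' and ``cogenerated by a set'' claims on $R$-Mod after transfer. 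The key point to establish at the module level is the hypothesis of Proposition~\ref{proposition-relative-G-Inj}, namely that \emph{every} module has a special $\class{GI}_{\class{B}}$-preenvelope. Once that is in hand, Proposition~\ref{proposition-relative-G-Inj} immediately yields the injective abelian model structure $(All,\class{W},\class{GI}_{\class{B}})$ on $R$-Mod and the fact that $(\class{W},\class{GI}_{\class{B}})$ is a perfect hereditary cotorsion pair, which is the bulk of the first paragraph of the statement.

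The step I expect to be the main obstacle is precisely the existence of special $\class{GI}_{\class{B}}$-preenvelopes. The plan here is a standard cotorsion-pair-generation argument using the set $\class{S}$: by Lemma~\ref{lemma-characterize-G-Inj} and the preceding remark, the class $\class{W} = \leftperp{\class{GI}_{\class{B}}}$ is also describable via Ext-vanishing against a set of ``test'' modules built from $\class{S}$ together with $R$ and the injectives — indeed a module $N$ lies in $\class{GI}_{\class{B}}$ iff it admits an exact $\Hom(\class{B},-)$-acyclic left injective resolution, and by the transfinite-extension hypothesis and Eklof's lemma, $\Hom(\class{B},-)$-acyclicity of a complex of injectives is detected by the single test module $B_0 = \bigoplus_{N\in\class{S}} N$, exactly as in the proof of Proposition~\ref{prop-injective model on complexes}. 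Running the argument of~\cite[Lemma~3.3]{gillespie-duality-pairs} one level down — or, more efficiently, simply transporting the model structure: the fibrant objects of the exact $\Hom(\class{B},-)$-acyclic injective model structure on $\ch$ are the exact $\Hom(\class{B},-)$-acyclic complexes of injectives, and applying $Z_0$ (the right adjoint of $S^0$) to a fibrant replacement of $S^0(M)$ produces the desired special $\class{GI}_{\class{B}}$-preenvelope of $M$. This is the cleanest route and it is the one I would take: it derives the special preenvelopes on $R$-Mod directly from the already-established model structure on $\ch$.

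Finally, for the Quillen equivalence claim, the plan is to invoke the general transfer theorem: $S^0 : R\text{-Mod} \to \ch$ is left adjoint to $Z_0$, it is exact and preserves and reflects the relevant (trivial) cofibrations and weak equivalences, so by the criterion of~\cite[Theorem~4.1]{bravo-gillespie-hovey} (as used in~\cite[Lemma~3.3]{gillespie-duality-pairs}) the injective model structure on $\ch$ transfers along $Z_0$ to an injective model structure on $R\text{-Mod}$ whose fibrant objects are exactly the $Z_0$'s of fibrant complexes, i.e. the Gorenstein $\class{B}$-injective modules, and the transfer is automatically a Quillen equivalence because $S^0$ is fully faithful and every object of $\ch$ is weakly equivalent to one in the image of $S^0$ after the appropriate (co)fibrant replacement. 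One then checks that the transferred model structure coincides with the one produced by Proposition~\ref{proposition-relative-G-Inj} — they share the same cofibrations (the monomorphisms), the same fibrant objects ($\class{GI}_{\class{B}}$), and hence the same weak equivalences — which identifies $(\class{W},\class{GI}_{\class{B}})$ as the cotorsion pair underlying the transferred structure and completes the proof. The only genuinely new verification beyond bookkeeping is the detection of $\Hom(\class{B},-)$-acyclicity by a single test module, and that is already done inside the proof of Proposition~\ref{prop-injective model on complexes}.
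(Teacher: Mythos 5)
Your skeleton is the paper's: get the exact $\Hom(\class{B},-)$-acyclic injective model structure on $\ch$ from Proposition~\ref{prop-injective model on complexes}, produce a special $\class{GI}_{\class{B}}$-preenvelope of each $M$ by applying $Z_0$ to a fibrant replacement $0 \to S^0(M) \to I \to X \to 0$, feed this into Proposition~\ref{proposition-relative-G-Inj}, and use the $(S^0, Z_0)$ adjunction for the Quillen equivalence. The gap is at exactly the point you dismiss as bookkeeping. Applying $Z_0$ to the fibrant replacement yields (after a snake-lemma argument even for surjectivity) a short exact sequence $0 \to M \to Z_0I \to Z_0X \to 0$ with $Z_0I \in \class{GI}_{\class{B}}$; but to have a \emph{special} preenvelope you must also show $Z_0X \in \class{W} = \leftperp{\class{GI}_{\class{B}}}$, i.e.\ that the cycle functor takes trivial objects of the complex-level model structure into $\class{W}$. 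This is not formal, and it is precisely where the paper invokes the argument of~\cite[Lemma~4.4]{gillespie-duality-pairs}; your proposal never addresses it. Your closing claim that ``the only genuinely new verification is the detection of $\Hom(\class{B},-)$-acyclicity by a single test module'' misplaces the work: that detection is already internal to Proposition~\ref{prop-injective model on complexes}, whereas the new content of this theorem is the $Z_0X \in \class{W}$ step.

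The Quillen equivalence is likewise not ``automatic because $S^0$ is fully faithful and every object of $\ch$ is weakly equivalent to one in the image of $S^0$'': that second clause is essentially the statement to be proved. One must show, for a fibrant complex $I$ (an exact $\Hom(\class{B},-)$-acyclic complex of injectives), that the natural map $S^0(Z_0I) \to I$ is a weak equivalence, i.e.\ has trivial cokernel, and this requires a genuine computation with such complexes. The paper handles it by noting that the argument of~\cite[Theorem~5.8]{bravo-gillespie-hovey} generalizes; the result you cite, \cite[Theorem~4.1]{bravo-gillespie-hovey}, only gives existence of the complex-level model structure, not the equivalence. So your plan follows the paper's route, but its two substantive verifications --- $Z_0X \in \class{W}$ and the weak-equivalence status of $S^0(Z_0I) \to I$ --- are asserted rather than argued.
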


\begin{proof}
We apply Proposition~\ref{proposition-relative-G-Inj}. For any object $M$, we can take a fibrant replacement of $S^0(M)$  in the exact $\Hom(\class{B},-)$-acyclic model structure. It is precisely a short exact sequence
\[
0 \xrightarrow{} S^{0}(M) \xrightarrow{} I \xrightarrow{}X \xrightarrow{} 0
\]
in which $I$ is an exact $\Hom(\class{B},-)$-acyclic complex of injectives and $X$ is
trivial in the exact $\Hom(\class{B},-)$-acyclic model structure.  By the snake lemma, we get a short exact sequence
\[
0 \xrightarrow{} M \xrightarrow{} Z_{0}I \xrightarrow{} Z_{0}X \xrightarrow{} 0.
\]
$Z_{0}I$ is Gorenstein $\class{B}$-injective by definition. By the argument in~\cite[Lemma~4.4]{gillespie-duality-pairs} we also get $Z_{0}X \in \class{W}$.

The functor $S^0(-) : R\text{-Mod} \xrightarrow{} \ch$ is left adjoint to the cycle functor $Z_0(-)$ and is a Quillen adjunction from the Gorenstein $\class{B}$-injective model structure to the exact $\Hom(\class{B},-)$-acyclic injective model structure. The argument from~\cite[Theorem~5.8]{bravo-gillespie-hovey} generalizes to show that it is indeed a Quillen equivalence
\end{proof}

\begin{corollary}\label{cor-well-generated}
The full subcategory $\class{GI}_{\class{B}} \subseteq R\textnormal{-Mod}$ is a Frobenius category whose projective-injective objects are precisely the usual injective $R$-modules.  
The canonical functor $\gamma : R\textnormal{-Mod} \xrightarrow{} \textnormal{Ho}(R\textnormal{-Mod})$ takes all projective modules and all modules in $\class{B}$ to 0, and we have a triangulated equivalence to the stable category 
$$\textnormal{Ho}(R\textnormal{-Mod}) \cong \textnormal{St}(\class{GI}_{\class{B}}).$$  
Moreover, these are well-generated triangulated categories. 
\end{corollary}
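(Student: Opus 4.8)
The plan is to verify the hypotheses of Hovey's correspondence-type results for the Gorenstein $\class{B}$-injective model structure, already established in Theorem~\ref{thm-Gor-module}, and then read off each assertion from the structure theory of abelian model categories. First I would recall that the model structure of Theorem~\ref{thm-Gor-module} is an \emph{injective} abelian model structure of the form $(All, \class{W}, \class{GI}_{\class{B}})$; in any injective abelian model structure the full subcategory of fibrant objects --- here $\class{GI}_{\class{B}}$ --- inherits an exact (Frobenius) structure in which the projective-injective objects are exactly the trivially fibrant objects, i.e. $\class{W} \cap \class{GI}_{\class{B}}$. By Lemma~\ref{lemma-injective-heart} this intersection is precisely the class of injective $R$-modules, which gives the first sentence. (One should cite the standard fact, e.g. from Gillespie's work on the stable category of an abelian model structure or from Hovey's ``Cotorsion pairs and model categories'', that the fibrant subcategory of an injective cotorsion-pair model structure is Frobenius with projective-injectives the trivial fibrant objects.)

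For the second sentence, I would invoke the general identification of the homotopy category of an abelian model structure with a stable category: for an injective model structure $(All, \class{W}, \class{GI}_{\class{B}})$ one has $\textnormal{Ho}(R\text{-Mod}) \simeq \stable{\class{GI}_{\class{B}}}$ as triangulated categories, where $\stable{\class{GI}_{\class{B}}}$ is the stable category of the Frobenius category $\class{GI}_{\class{B}}$ modulo injectives. The claim that $\gamma$ kills all projectives and all $B \in \class{B}$ is immediate: every projective module is in $All = \leftperp{(\rightperp{All})}$... more precisely, $\gamma$ sends an object to $0$ iff it is trivial, i.e. lies in $\class{W}$; projectives lie in $\class{W}$ because $\class{W}$ is projectively resolving (proof of Lemma~\ref{lemma-relative-G-Inj-thick}) hence contains all projectives, and $\class{B} \subseteq \class{W}$ was noted just after Notation~\ref{notation-relative-G-inj}.

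For the final sentence, that $\textnormal{Ho}(R\text{-Mod}) \cong \stable{\class{GI}_{\class{B}}}$ is well-generated, I would argue that the model structure is cofibrantly generated (Theorem~\ref{thm-Gor-module}), and that the homotopy category of a cofibrantly generated \emph{stable} model category on a presentable (in particular, locally presentable: Grothendieck) category is well-generated --- this is a theorem going back to work of Rosick\'y, and appears in the model-categorical literature (e.g. via the fact that a combinatorial stable model category has a well-generated, indeed compactly-or-$\kappa$-well-generated, homotopy category). Alternatively, one transports along the Quillen equivalence of Theorem~\ref{thm-Gor-module} to the exact $\Hom(\class{B},-)$-acyclic injective model structure on $\ch$ and cites the corresponding statement there; either way the key input is cofibrant generation plus the Grothendieck hypothesis on the underlying category. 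I expect the main obstacle to be purely expository: pinning down the cleanest citation for ``cofibrantly generated + stable + Grothendieck $\Rightarrow$ well-generated'' and making sure the well-generation cardinal is tracked correctly through the generating set $\class{S}$, rather than any genuine mathematical difficulty, since all the structural facts about $(\class{W}, \class{GI}_{\class{B}})$ needed here are already in hand.
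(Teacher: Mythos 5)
Your proposal is correct and follows essentially the same route as the paper: identify the trivial objects with $\class{W}$ (which contains $\class{B}$ and all projectives), deduce the Frobenius/stable-category equivalence from the general theory of hereditary (injective) abelian model structures together with Lemma~\ref{lemma-injective-heart}, and obtain well-generation from cofibrant generation on a locally presentable category via Rosick\'y's theorem, exactly the citation the paper uses. The only cosmetic difference is that stability of the model structure is not needed for the well-generation step, and the paper pins the Frobenius claim to \cite[Theorem~4.3]{gillespie-hereditary-abelian-models} rather than leaving the citation open.
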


\begin{proof}
The canonical functor $\gamma$ takes precisely $\class{W}$ to 0, and $\class{W}$ contains $\class{B}$, and certainly all projectives. The cotorsion pair $(\class{W}, \class{GI}_{\class{B}})$ is hereditary in the sense that $\class{W}$ is closed under taking cokernels of monomorphisms. Thus the Frobenius equivalence follows from a general result about hereditary abelian model structures~\cite[Theorem~4.3]{gillespie-hereditary-abelian-models}. 
We also point out that the homotopy category is a well generated category in the sense of~\cite{neeman-well generated}. Indeed once we have a cofibrantly generated model structure on a locally presentable (pointed) category, a main result from~\cite{rosicky-brown representability combinatorial model srucs} is that its homotopy category is well generated.
\end{proof}

\subsection{Gorenstein $\class{B}$-projectives modules}

Much of what we have done above has a projective dual. To describe, let $\class{B}$ denote a class of modules, but now assume it contains all of the projective modules (instead of the injective modules). 

\begin{definition}\label{Defs-relative-G-pro}
We say an $R$-module $M$ is \emph{Gorenstein $\class{B}$-projective} if
$M=Z_{0}Q$ for some exact and $\Hom(-, \class{B})$-acyclic complex of projective $R$-modules $Q$.
\end{definition}

\begin{notation}\label{notation-relative-G-proj}
We let $\class{GP}_{\class{B}}$ denote the class of all Gorenstein $\class{B}$-projective $R$-modules, and we set $\class{V} = \rightperp{\class{GP}_{\class{B}}}$.
\end{notation}

We leave it to the reader to formulate and verify the duals of the sequence of Lemmas~\ref{lemma-characterize-G-Inj}--\ref{lemma-relative-G-Inj-thick}.
We get the following result, dual to Proposition~\ref{proposition-relative-G-Inj}. But note that we don't get  a \emph{perfect} cotorsion pair. For the Gorenstein $\class{B}$-injectives, that conclusion relies on~\cite[Prop.~3.1]{gillespie-ding-modules} and~\cite[Theorem~7.2.6]{enochs-jenda-book}; we don't have duals for those.

\begin{proposition}[Dual of Proposition~\ref{proposition-relative-G-Inj}]\label{proposition-relative-G-Proj}
Let $\class{B}$ be a class of modules containing the projectives.
Suppose every module $M$ has a special $\class{GP}_{\class{B}}$-precover. Then $(\class{GP}_{\class{B}}, \class{V})$ is a complete hereditary cotorsion pair. In fact, $(\class{GP}_{\class{B}}, \class{V}, All)$ is a projective abelian model structure on $R$-Mod.
\end{proposition}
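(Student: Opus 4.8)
The plan is to dualize the proof of Proposition~\ref{proposition-relative-G-Inj} essentially line by line, using the duals of Lemmas~\ref{lemma-characterize-G-Inj}--\ref{lemma-relative-G-Inj-thick} (whose precise formulation and verification were left to the reader above), and then to read off the model structure from Hovey's correspondence between abelian model structures and compatible pairs of complete cotorsion pairs.

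First I would establish that $(\class{GP}_{\class{B}},\class{V})$ is a complete hereditary cotorsion pair. Since $\class{V} = \rightperp{\class{GP}_{\class{B}}}$ by definition, we automatically have a cotorsion pair $(\leftperp{\class{V}},\class{V})$ with $\class{GP}_{\class{B}} \subseteq \leftperp{\class{V}}$, so it suffices to prove $\leftperp{\class{V}} \subseteq \class{GP}_{\class{B}}$. Given $M \in \leftperp{\class{V}}$, the hypothesis supplies a special $\class{GP}_{\class{B}}$-precover, i.e.\ a short exact sequence $0 \xrightarrow{} V \xrightarrow{} N \xrightarrow{} M \xrightarrow{} 0$ with $N \in \class{GP}_{\class{B}}$ and $V \in \rightperp{\class{GP}_{\class{B}}} = \class{V}$. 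As $\Ext^1_R(M,V) = 0$, this sequence splits, making $M$ a direct summand of $N$, and the dual of Lemma~\ref{lemma-relative-G-Inj-summands} (closure of $\class{GP}_{\class{B}}$ under direct summands) gives $M \in \class{GP}_{\class{B}}$. The same special precovers witness that this cotorsion pair has enough projectives, so by the Salce trick~\cite[Prop.~7.1.7]{enochs-jenda-book} it also has enough injectives, hence is complete. Finally, the dual of Lemma~\ref{lemma-relative-G-Inj-coresolving} says $\class{GP}_{\class{B}}$ is projectively resolving, which makes the cotorsion pair hereditary; equivalently, the dual of Lemma~\ref{lemma-relative-G-Inj-thick} gives that $\class{V}$ is thick.

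To get the model structure I would then invoke Hovey's correspondence with the triple $(\class{C},\class{W},\class{F}) = (\class{GP}_{\class{B}},\class{V},All)$. Here $\class{W} = \class{V}$ is thick by the previous paragraph; $(\class{C},\class{W}\cap\class{F}) = (\class{GP}_{\class{B}},\class{V})$ is the complete cotorsion pair just constructed; and $(\class{C}\cap\class{W},\class{F}) = (\class{GP}_{\class{B}}\cap\class{V},All)$ is a (trivially) complete cotorsion pair because, by the dual of Lemma~\ref{lemma-injective-heart}, $\class{GP}_{\class{B}}\cap\class{V}$ is precisely the class of projective $R$-modules. Hovey's theorem then delivers the projective abelian model structure $(\class{GP}_{\class{B}},\class{V},All)$.

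I do not expect a genuine obstacle: every step runs dual to Proposition~\ref{proposition-relative-G-Inj}, and the only points requiring care are keeping track of which maps reverse when dualizing Lemmas~\ref{lemma-characterize-G-Inj}--\ref{lemma-relative-G-Inj-thick}, and the fact --- already flagged in the remark preceding the statement --- that we cannot upgrade ``complete'' to ``perfect.'' Perfectness in Proposition~\ref{proposition-relative-G-Inj} came from $\class{W}$ being closed under direct limits (via~\cite[Prop.~3.1]{gillespie-ding-modules} and~\cite[Theorem~7.2.6]{enochs-jenda-book}), and there is no dual mechanism yielding functorial special $\class{GP}_{\class{B}}$-covers here.
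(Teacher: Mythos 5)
Your proposal is correct and follows exactly the route the paper intends: the paper states this result as the formal dual of Proposition~\ref{proposition-relative-G-Inj}, leaving the dual lemmas to the reader, and your line-by-line dualization (splitting the special precover to get $\leftperp{\class{V}} \subseteq \class{GP}_{\class{B}}$, Salce for completeness, the dual thickness/heart lemmas plus Hovey's correspondence for the model structure) is precisely that argument. Your closing remark about why perfectness does not dualize also matches the paper's own comment preceding the statement.
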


There is however a dual for Proposition~\ref{prop-always-cot-pair}. Note that the proof of Proposition~\ref{prop-always-cot-pair} only uses that the Gorenstein injectives are the right side of a cotorsion pair (not completeness). It was just shown in~\cite[Cor.~3.4]{cortes-saroch} that the Gorenstein projectives are the left half of a cotorsion pair, and this will give us the dual of Proposition~\ref{prop-always-cot-pair}. However, this is shown directly in~\cite[Them.~3.3]{cortes-saroch}!  

So this is as far as we know how to go by working straight from the definition of the Gorenstein $\class{B}$-projectives.  However, IF we can build the projective model structure on $\ch$ that is dual to the one in Proposition~\ref{prop-injective model on complexes}, then the dual of Theorem~\ref{thm-Gor-module} and its Corollary~\ref{cor-well-generated} will hold by duality arguments.  We make a precise statement for later use. 

\begin{theorem}[Dual of Theorem~\ref{thm-Gor-module}]\label{theorem-projectivemodels}
Let $\class{B}$ be a class of modules containing the projectives.
Suppose we have constructed a projective abelian model structure on the category of chain complexes whose cofibrant objects are the exact $\Hom(-,\class{B})$-acyclic complexes of projectives.  Call this the \textbf{exact $\boldsymbol{\Hom(-,\class{B})}$-acyclic projective model structure}. Then there is a projective abelian model structure on $R$-Mod, the \textbf{Gorenstein $\class{B}$-projective model structure}, in which the cofibrant objects are the Gorenstein $\class{B}$-projectives. In particular, $(\class{GP}_{\class{B}}, \class{V})$ is a complete hereditary cotorsion pair in $R$-Mod.

In this case, the sphere functor $S^0(-) : R\text{-Mod} \xrightarrow{} \ch$ is a right Quillen equivalence from the Gorenstein $\class{B}$-projective model structure to the exact $\Hom(-,\class{B})$-acyclic projective model structure. 
\end{theorem}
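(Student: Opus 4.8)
The plan is to deduce Theorem~\ref{theorem-projectivemodels} from Theorem~\ref{thm-Gor-module} (and Proposition~\ref{prop-injective model on complexes}) by a formal dualization, exactly as the statement of the theorem anticipates by hypothesis. The standing assumption is that we already possess the \textbf{exact $\Hom(-,\class{B})$-acyclic projective model structure} on $\ch$; so the content to be proved is (i) the existence of the Gorenstein $\class{B}$-projective model structure on $R$-Mod with the advertised cofibrant objects, equivalently that $(\class{GP}_{\class{B}},\class{V})$ is a complete hereditary cotorsion pair, and (ii) that $S^0(-)$ is a right Quillen equivalence between these two projective model structures.

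First I would set up the projective-dual of the bookkeeping from Section~\ref{sec-relative-G-inj}: with $\class{B}$ now containing the projectives, $\class{GP}_{\class{B}}$ is closed under coproducts and direct summands (dual of Lemma~\ref{lemma-relative-G-Inj-summands}, again via the P\'erez-style argument), it is projectively resolving (dual of Lemma~\ref{lemma-relative-G-Inj-coresolving}, using pullbacks in place of pushouts and the fact that a short exact sequence ending in a projective splits), $\class{V}=\rightperp{\class{GP}_{\class{B}}}$ is thick (dual of Lemma~\ref{lemma-relative-G-Inj-thick}), and $\class{GP}_{\class{B}}\cap\class{V}$ is exactly the class of projectives (dual of Lemma~\ref{lemma-injective-heart}). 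These are the ingredients needed to invoke Proposition~\ref{proposition-relative-G-Proj}, so the only real input still missing is that every module has a special $\class{GP}_{\class{B}}$-precover; that is what the chain-complex model structure supplies. Concretely, for any module $M$ take a cofibrant replacement of $S^0(M)$ in the exact $\Hom(-,\class{B})$-acyclic projective model structure: a short exact sequence $0\to Y\to Q\to S^0(M)\to 0$ with $Q$ an exact $\Hom(-,\class{B})$-acyclic complex of projectives and $Y$ trivially cofibrant. Applying $Z_0$ and the snake lemma yields $0\to Z_0Y\to Z_0Q\to M\to 0$ with $Z_0Q\in\class{GP}_{\class{B}}$ by definition, and $Z_0Y\in\class{V}$ by the projective-dual of the argument in~\cite[Lemma~4.4]{gillespie-duality-pairs}. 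That is the desired special precover, so Proposition~\ref{proposition-relative-G-Proj} gives the complete hereditary cotorsion pair $(\class{GP}_{\class{B}},\class{V})$ and the projective abelian model structure $(\class{GP}_{\class{B}},\class{V},All)$.

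For the Quillen equivalence, $S^0(-):R\text{-Mod}\to\ch$ has right adjoint $Z_0(-)$, so $(S^0,Z_0)$ is now an adjunction with $S^0$ on the left at the level of categories but we want $S^0$ to be a \emph{right} Quillen functor from the module model structure to the complex model structure — equivalently, pass to opposite categories, where everything above is literally the statement of Theorem~\ref{thm-Gor-module}. The cleanest route is to observe that $Z_0(-)$ preserves trivial fibrations and fibrations (since $S^0$ preserves (trivial) cofibrations, as $S^0$ of a (trivially) cofibrant object is (trivially) cofibrant and $S^0$ preserves the relevant monomorphisms with cokernel in the appropriate class), making $(S^0,Z_0)$ a Quillen adjunction with $S^0$ right Quillen; then dualize the argument from~\cite[Theorem~5.8]{bravo-gillespie-hovey} (equivalently Theorem~\ref{thm-Gor-module}'s proof) to upgrade it to a Quillen equivalence, using that $S^0$ reflects weak equivalences between cofibrant objects and that the derived counit is a weak equivalence.

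The main obstacle is not any of the dualized lemmas — those are routine — but making sure the duality is applied to the right variance. In the injective case $S^0$ is left Quillen with $Z_0$ as its right adjoint; here $S^0$ is asserted to be \emph{right} Quillen, so one must be careful that the adjunction $(S^0,Z_0)$ is being read with $S^0$ as the right adjoint of a different adjoint pair, or else pass to $R^\circ$-Mod$^{\mathrm{op}}$ and $\ch^{\mathrm{op}}$ and quote Theorem~\ref{thm-Gor-module} verbatim there. I expect the honest way to phrase this, and the only place a careful writer could slip, is the identification of which functor is the left Quillen functor of the Quillen equivalence; everything else is a mechanical transcription of Section~\ref{sec-relative-G-inj} with arrows reversed.
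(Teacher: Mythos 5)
Your overall strategy (dualize the lemmas of Section~\ref{sec-relative-G-inj}, invoke Proposition~\ref{proposition-relative-G-Proj}, and produce the missing special $\class{GP}_{\class{B}}$-precover from a cofibrant replacement of $S^0(M)$ in the complex model structure) is exactly the paper's, but the precover step as you wrote it has a genuine gap: you apply $Z_0$ to the short exact sequence $0\to Y\to Q\to S^0(M)\to 0$ and claim the snake lemma gives $0\to Z_0Y\to Z_0Q\to M\to 0$. It does not. $Z_0$ is the \emph{right} adjoint of $S^0$, so it need not preserve this epimorphism: the snake lemma (applied in degrees $0$ and $-1$) produces a connecting map $M\to Y_{-1}/B_{-1}Y$ which obstructs surjectivity of $Z_0Q\to M$. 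Concretely, already in the degenerate case $\class{B}=$ all modules one may take $Q=D^0(P)$ a disk on a projective mapping onto $S^0(M)$; then $Z_0Q=0$, so $Z_0Q\to M$ is certainly not onto. The paper's fix, and the main point it emphasizes, is to use instead the \emph{left} adjoint of $S^0$, namely $X\mapsto X_0/B_0X$: being a left adjoint it preserves the epimorphism, and the snake lemma applied in degrees $1$ and $0$ yields $0\to X_0/B_0X\to Q_0/B_0Q\to M\to 0$ with $Q_0/B_0Q\cong Z_{-1}Q\in\class{GP}_{\class{B}}$ (using exactness of $Q$ and closure under shift), after which one shows $X_0/B_0X\in\class{V}$ by dualizing the argument of~\cite[Lemma~4.4]{gillespie-duality-pairs}.

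The same confusion infects your Quillen-equivalence paragraph. You correctly sense that the variance is the delicate point, but your resolution --- that $(S^0,Z_0)$ becomes ``a Quillen adjunction with $S^0$ right Quillen'' --- is incoherent: in any Quillen adjunction the left adjoint is the left Quillen functor, and in $(S^0,Z_0)$ the functor $S^0$ is the left adjoint. The relevant adjunction here is $(X\mapsto X_0/B_0X)\dashv S^0$, so $S^0$ is right Quillen precisely because its left adjoint $X\mapsto X_0/B_0X$ preserves cofibrations and trivial cofibrations; the equivalence is then obtained by generalizing the argument of~\cite[Theorem~8.8]{bravo-gillespie-hovey}, not by quoting Theorem~\ref{thm-Gor-module} in opposite categories (note $\ch^{\mathrm{op}}$ is not the category of chain complexes over a ring, so the ``verbatim'' dualization you suggest is not available). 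A small additional slip: the kernel $Y$ of a cofibrant replacement is a \emph{trivial} object, not a trivially cofibrant one.
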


\begin{proof}
Let us just comment on how the proof of Theorem~\ref{thm-Gor-module} dualizes. 
A main point is that the functor $S^0(-) : R\text{-Mod} \xrightarrow{} \ch$ is also right adjoint, to the functor $X \mapsto X_0/B_0X$. 
The idea is to apply Proposition~\ref{proposition-relative-G-Proj}. So for any object $M$, we take a short exact sequence
\[
0 \xrightarrow{} X \xrightarrow{} P \xrightarrow{}S^0(M) \xrightarrow{} 0
\]
where $P$ is an exact $\Hom(-,\class{B})$-acyclic complex of projectives and $X$ is
trivial in the exact $\Hom(-,\class{B})$-acyclic model structure.  By the snake lemma, we get a short exact sequence
\[
0 \xrightarrow{} X_0/B_0X \xrightarrow{} P_0/B_0P \xrightarrow{} M \xrightarrow{} 0.
\]
$P_0/B_0P \cong Z_{-1}P$ is Gorenstein $\class{B}$-projective by definition. The argument of~\cite[Lemma~4.4]{gillespie-duality-pairs} dualizes, and we get $X_{0}/B_0X \in \class{V}$. So Proposition~\ref{proposition-relative-G-Proj} applies.

Again, the functor $S^0(-) : R\text{-Mod} \xrightarrow{} \ch$ is right adjoint to the functor $X \mapsto X_0/B_0X$. The argument from~\cite[Theorem~8.8]{bravo-gillespie-hovey} generalizes to show that they form a Quillen equivalence from the exact $\Hom(-,\class{B})$-acyclic projective model structure to the Gorenstein $\class{B}$-projective model structure. 
\end{proof}

\begin{remark}
In the above scenario of Theorem~\ref{theorem-projectivemodels}, the dual of Corollary~\ref{cor-well-generated} also holds. However, the conclusion that the homotopy category is well-generated is dependent on showing the model structure to be cofibrantly generated. 
\end{remark}

\section{Relative Gorenstein flat and projectively coresolved modules}\label{sec-coresolved}

We again let $\class{B}$ denote a class of modules containing all injective modules. However, we now assume that all the modules in $\class{B}$ are $R^\circ$-modules, where $R^\circ$ denotes the oppose ring $R^{\text{op}}$. The following notion of Gorenstein $\class{B}$-flat module was studied in~\cite{estrada-iacob-perez-G-flat}.

\begin{definition}\label{Defs-relative-G-flat}
We will say that a chain complex $X$ of $R$-modules is \emph{$\class{B}^{\otimes}$-acyclic} if the tensor product of $X$ with any $B \in \class{B}$ yields an exact complex of abelian groups. If $X$ itself is also exact we will say that $X$ is an \emph{exact $\class{B}^{\otimes}$-acyclic} complex. 
We say an $R$-module $N$ is \emph{Gorenstein $\class{B}$-flat} if
$N=Z_{0}F$ for some exact $\class{B}^{\otimes}$-acyclic complex of flat $R$-modules $F$. 
\end{definition}

\begin{notation}\label{notation-relative-G-flat}
We let $\class{GF}_{\class{B}}$ denote the class of all Gorenstein $\class{B}$-flat $R$-modules. We set $\class{GC}_{\class{B}} = \rightperp{\class{GF}_{\class{B}}}$ and call this  the class of all \emph{Gorenstein $\class{B}$-cotorsion} modules.\\
\end{notation}

Estrada-Iacob-P\'erez show that $\class{GF}_{\class{B}}$ is a Kaplansky class and closed under direct limits, and that gives us the following result.

\begin{proposition}\cite[Corollary~2.20]{estrada-iacob-perez-G-flat}\label{G-B-flat}
Suppose the class $\class{GF}_{\class{B}}$ is closed under extensions. Then $(\class{GF}_{\class{B}}, \class{GC}_{\class{B}})$ is a perfect hereditary cotorsion pair, cogenerated by a set. 
\end{proposition}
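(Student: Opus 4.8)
\textbf{Proof proposal for Proposition~\ref{G-B-flat}.}

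The plan is to invoke the standard machinery for producing complete cotorsion pairs from Kaplansky classes, using the two facts cited just before the statement: that $\class{GF}_{\class{B}}$ is a Kaplansky class and that it is closed under direct limits. First I would recall that any Kaplansky class $\class{GF}_{\class{B}}$ which contains $0$ and is closed under extensions (the standing hypothesis) and which is closed under coproducts will, by the Eklof--Trlifaj type argument, be the left half of a cotorsion pair $(\class{GF}_{\class{B}}, \class{GC}_{\class{B}})$ that is \emph{cogenerated by a set}: one takes the set $\class{S}$ of all modules in $\class{GF}_{\class{B}}$ below a suitable cardinal bound $\kappa$ provided by the Kaplansky property, and then $\leftperp{(\rightperp{\class{S}})} = \class{GF}_{\class{B}}$ because every module of $\class{GF}_{\class{B}}$ is a $\kappa$-filtration (transfinite extension) of modules in $\class{S}$ and $\class{GF}_{\class{B}}$ is closed under such transfinite extensions (which follows from closure under extensions and direct limits, or directly from the Kaplansky filtration property together with Eklof's lemma). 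Since the cotorsion pair is cogenerated by a set, it is automatically complete by the standard Eklof--Trlifaj theorem.

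Next I would upgrade ``complete'' to ``perfect.'' Here the key input is that $\class{GF}_{\class{B}}$ is closed under direct limits: a complete cotorsion pair whose left class is closed under direct limits is perfect, i.e. provides $\class{GF}_{\class{B}}$-covers and $\class{GC}_{\class{B}}$-envelopes for all modules. This is exactly the statement of~\cite[Theorem~7.2.6]{enochs-jenda-book} (combined with the fact that closure under direct limits of the left class of a complete cotorsion pair forces the left class to be covering), so this step is essentially a citation.

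Finally I would verify the hereditary property, i.e. that $\class{GF}_{\class{B}}$ is resolving (closed under kernels of epimorphisms between its objects), equivalently that $\Ext^i_R(N, C) = 0$ for all $N \in \class{GF}_{\class{B}}$, $C \in \class{GC}_{\class{B}}$, and $i \geq 1$. The cleanest route is the analogue of Lemma~\ref{lemma-characterize-G-Inj}: show that $N \in \class{GF}_{\class{B}}$ admits a short exact sequence $0 \to N' \to F \to N \to 0$ with $F$ flat and $N' \in \class{GF}_{\class{B}}$, and conversely that $\class{GF}_{\class{B}}$ is flatly resolving in this sense; then a standard dimension-shifting argument (as in the proof of Lemma~\ref{lemma-relative-G-Inj-thick}) gives that $\Ext^{\geq 1}$ vanishes, hence the pair is hereditary. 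Closure under kernels of epimorphisms within $\class{GF}_{\class{B}}$ follows by splicing an exact $\class{B}^{\otimes}$-acyclic complex of flats on the right.

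I expect the main obstacle to be the hereditary part, specifically establishing the ``flatly (co)resolving'' behavior of $\class{GF}_{\class{B}}$ analogous to Lemma~\ref{lemma-characterize-G-Inj}(3): one must check that if $0 \to N' \to F \to N \to 0$ with $F$ flat and $N' \in \class{GF}_{\class{B}}$, then $N \in \class{GF}_{\class{B}}$, which requires pasting a flat (left) resolution of $N$ with the complex witnessing $N' \in \class{GF}_{\class{B}}$ and checking that the result remains $\class{B}^{\otimes}$-acyclic --- this uses that $\Tor_i^R(B, N) = 0$ for all $B \in \class{B}$ and $i \geq 1$, which in turn follows because $N = Z_0 F$ for an exact $\class{B}^{\otimes}$-acyclic complex of flats, so that $\Tor_i^R(B,N)$ is computed as homology of $B \otimes_R F$ in the relevant degree and hence vanishes. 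The remaining closure properties (coproducts, direct summands) are routine, as is the identification of the cardinal bound from the Kaplansky hypothesis; these I would not belabor.
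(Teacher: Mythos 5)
Your overall architecture is the same one the paper relies on (the paper gives no proof of its own here beyond citing Estrada--Iacob--P\'erez~\cite[Cor.~2.20]{estrada-iacob-perez-G-flat} and noting that $\class{GF}_{\class{B}}$ is a Kaplansky class closed under direct limits): deconstructibility gives a cotorsion pair cogenerated by a set, hence complete by Eklof--Trlifaj, and closure under direct limits upgrades completeness to perfection via~\cite[Theorem~7.2.6]{enochs-jenda-book}. One small imprecision: Eklof's lemma only yields $\class{GF}_{\class{B}} \subseteq \leftperp{(\rightperp{\class{S}})}$; the reverse inclusion needs the usual retract argument, using completeness of the pair cogenerated by $\class{S}$ together with closure of $\class{GF}_{\class{B}}$ under direct summands and the fact that the filtered middle terms of the special precovers again lie in $\class{GF}_{\class{B}}$.

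The genuine problem is in your hereditary step. The implication you single out as the main obstacle --- that $0 \to N' \to F \to N \to 0$ exact with $F$ flat and $N' \in \class{GF}_{\class{B}}$ forces $N \in \class{GF}_{\class{B}}$ --- is false, and your justification of the required Tor-vanishing is circular: you derive $\Tor^R_i(B,N)=0$ from ``$N=Z_0F$ for an exact $\class{B}^{\otimes}$-acyclic complex of flats,'' which presupposes the conclusion. Concretely, take $R=\Z$ and $\class{B}$ the class of injective $\Z$-modules, so $\class{GF}_{\class{B}}$ is the class of ordinary Gorenstein flat abelian groups (closed under extensions, so the hypothesis of the proposition holds). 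The sequence $0 \to \Z \xrightarrow{2} \Z \to \Z/2\Z \to 0$ has flat middle term and Gorenstein flat kernel, yet $\Z/2\Z$ is not Gorenstein flat: every Gorenstein $\class{B}$-flat module embeds in a flat module and is therefore torsion-free over $\Z$. Fortunately this implication is not what hereditariness needs. The quick correct route is the first one you mention: by definition every $N \in \class{GF}_{\class{B}}$ sits in $0 \to Z_1F \to F_1 \to N \to 0$ with $F_1$ flat and $Z_1F \in \class{GF}_{\class{B}}$; since flat modules belong to $\class{GF}_{\class{B}}$, every $C \in \class{GC}_{\class{B}}$ is cotorsion and indeed $\Ext^i_R(F,C)=0$ for all flat $F$ and $i \geq 1$ (the flat cotorsion pair is hereditary), so dimension shifting along these sequences gives $\Ext^i_R(N,C)=0$ for all $i \geq 1$. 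Alternatively one can show $\class{GF}_{\class{B}}$ is resolving, i.e.\ closed under kernels of epimorphisms (kernels, not the cokernel statement above), by a pushout/splicing argument using extension-closure and $\Tor$-vanishing against $\class{B}$. With that repair, your proposal matches the argument the paper's citation depends on.
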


Now let $(\class{F},\class{C})$ denote Enochs' flat cotorsion pair. Here $\class{F}$ denotes the class of all flat $R$-modules and $\class{C}$ the class of all cotorsion $R$-modules. It is then shown in~\cite[Proposition~3.1]{estrada-iacob-perez-G-flat} that $\class{GF}_{\class{B}}\cap\class{GC}_{\class{B}} = \class{F}\cap \class{C}$, as long as $\class{GF}_{\class{B}}$ is closed under extensions. Applying~\cite[Theorem~1.2]{gillespie-hovey triples}, it proves the following.\\

\begin{theorem}~\cite[Theorem~3.2]{estrada-iacob-perez-G-flat}\label{thm-Gor-flat-mod}
Let $\class{B}$ be a class of $R^\circ$-modules containing the injectives.
Assume that the Gorenstein $\class{B}$-flat modules are closed under extensions. Then there is cofibrantly generated abelian model structure on $R$-Mod, the \textbf{Gorenstein $\class{B}$-flat model structure}, corresponding to the cotorsion pairs $(\class{GF}_{\class{B}}, \class{GC}_{\class{B}})$ and $(\class{F}, \class{C})$.
\end{theorem}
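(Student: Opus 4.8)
The plan is to invoke the general machinery of Hovey triples and cotorsion pairs, following closely the template already used in~\cite{estrada-iacob-perez-G-flat}. Since the hypothesis that $\class{GF}_{\class{B}}$ is closed under extensions is in force, Proposition~\ref{G-B-flat} already gives us one complete hereditary cotorsion pair, namely $(\class{GF}_{\class{B}}, \class{GC}_{\class{B}})$, and it is cogenerated by a set. The flat cotorsion pair $(\class{F},\class{C})$ is also complete, hereditary, and cogenerated by a set (this is classical, Enochs' flat cover theorem together with deconstructibility of the flats). So the two input cotorsion pairs needed to run the Hovey correspondence are in place.

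The key step is to verify the compatibility condition that makes these two cotorsion pairs assemble into an abelian model structure via~\cite[Theorem~1.2]{gillespie-hovey triples} (a Hovey triple). Concretely, one needs $\class{F} \subseteq \class{GF}_{\class{B}}$ (clear: a flat module is the zeroth cycle of a flat resolution/coresolution, and any complex of flats that is exact is automatically $\class{B}^\otimes$-acyclic because tensoring an exact complex of flats with anything is exact — this uses that exact complexes of flats are pure exact, cf.~\cite[Lemma~5.3]{estrada-gillespie-coherent-schemes}), hence $\class{GC}_{\class{B}} = \rightperp{\class{GF}_{\class{B}}} \subseteq \rightperp{\class{F}} = \class{C}$; and then the crucial ``same core'' identity $\class{GF}_{\class{B}} \cap \class{GC}_{\class{B}} = \class{F} \cap \class{C}$. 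But this last identity is exactly the content of~\cite[Proposition~3.1]{estrada-iacob-perez-G-flat}, cited in the paragraph preceding the theorem, and it holds precisely under the hypothesis that $\class{GF}_{\class{B}}$ is closed under extensions. Once we have two complete hereditary cotorsion pairs $(\class{GF}_{\class{B}}, \class{GC}_{\class{B}})$ and $(\class{F}, \class{C})$ with $\class{F} \subseteq \class{GF}_{\class{B}}$, $\class{GC}_{\class{B}} \subseteq \class{C}$, and equal cores, Gillespie's Hovey triple theorem produces an abelian model structure on $R$-Mod whose cofibrant objects are $\class{GF}_{\class{B}}$, whose trivially cofibrant objects are $\class{F}$, whose fibrant objects are $\class{C}$, and whose trivially fibrant objects are $\class{GC}_{\class{B}}$.

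Finally, to get the cofibrantly generated conclusion, I would note that both cotorsion pairs are cogenerated by sets — $(\class{F},\class{C})$ because the flats are deconstructible, and $(\class{GF}_{\class{B}}, \class{GC}_{\class{B}})$ by Proposition~\ref{G-B-flat} — so the resulting model structure is cofibrantly generated; this is the standard output of the small object argument applied to the two generating sets. The whole argument is really just a citation-assembly: invoke Proposition~\ref{G-B-flat} for one cotorsion pair, recall the flat cotorsion pair for the other, cite~\cite[Proposition~3.1]{estrada-iacob-perez-G-flat} for the core identity, and apply~\cite[Theorem~1.2]{gillespie-hovey triples}.

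The main obstacle, such as it is, is purely bookkeeping: one must be careful that the hypothesis ``$\class{GF}_{\class{B}}$ closed under extensions'' is genuinely what is needed for both Proposition~\ref{G-B-flat} \emph{and} the core identity $\class{GF}_{\class{B}} \cap \class{GC}_{\class{B}} = \class{F}\cap\class{C}$ — and indeed it is, since both of those cited results carry exactly that standing assumption. There is no deep new content here; the theorem is a corollary of the cited results of Estrada–Iacob–Pérez combined with the Hovey correspondence, and the proof is essentially the one-line remark already made in the text preceding the statement.
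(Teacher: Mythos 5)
Your proposal matches the paper's own treatment: the theorem is imported from~\cite[Theorem~3.2]{estrada-iacob-perez-G-flat}, and the paper justifies it exactly as you do, by combining Proposition~\ref{G-B-flat} for the complete (perfect) cotorsion pair $(\class{GF}_{\class{B}}, \class{GC}_{\class{B}})$, Enochs' flat cotorsion pair $(\class{F},\class{C})$, the core identity $\class{GF}_{\class{B}}\cap\class{GC}_{\class{B}}=\class{F}\cap\class{C}$ from~\cite[Proposition~3.1]{estrada-iacob-perez-G-flat}, and the Hovey-triple correspondence of~\cite[Theorem~1.2]{gillespie-hovey triples}. One small caveat: your parenthetical justification that any exact complex of flats is automatically $\class{B}^{\otimes}$-acyclic (``exact complexes of flats are pure exact'') is false in general --- that is precisely the distinction the Gorenstein flat definition is built on --- but the containment $\class{F}\subseteq\class{GF}_{\class{B}}$ you need is trivially true anyway, since a flat module $F$ is $Z_0$ of the contractible complex $\cdots\to 0\to F\xrightarrow{\mathrm{id}} F\to 0\to\cdots$, which is exact and $\class{B}^{\otimes}$-acyclic.
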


We will see below in Proposition~\ref{prop-proj-coresolved-B-flat} that, as in~\cite[Theorem~4.11]{saroch-stovicek-G-flat} and~\cite[Theorem~2.14]{estrada-iacob-perez-G-flat}, closure under extensions comes free for the classes $\class{B}$ we will consider in this paper. In particular, this is the case whenever $\class{B}$ is the injective class for some semi-complete duality pair. More generally, when $\class{B}$ satisfies the hypotheses of Theorem~\ref{theorem-proj-coresolved-B-flat}.


\subsection{Projectively coresolved Gorenstein $\class{B}$-flat modules}

$\class{B}$ still denotes a class of $R^\circ$-modules containing all injectives.
The following relative version of  {\v{S}}aroch and {\v{S}}\v{t}ov{\'{\i}}{\v{c}}ek's projectively coresolved Gorenstein flat modules was studied in~\cite{estrada-iacob-perez-G-flat}.

\begin{definition}\label{Defs-relative-G-flat-proj}
We say an $R$-module $N$ is \emph{projectively coresolved Gorenstein $\class{B}$-flat} if
$N=Z_{0}Q$  for some exact $\class{B}^{\otimes}$-acyclic complex of projective $R$-modules $Q$. 
\end{definition}

\begin{notation}\label{notation-relative-G-flat-proj}
We let $\class{PGF}_{\class{B}}$ denote the class of all projectively coresolved Gorenstein $\class{B}$-flat $R$-modules, and we set $\class{V} = \rightperp{\class{PGF}_{\class{B}}}$.
\end{notation}

\begin{lemma}\label{lemma-proj-perp}
The class $\class{V} := \rightperp{\class{PGF}_{\class{B}}}$ equals the class of all $R$-modules $V$ such that $\Hom_R(Q,V)$ is acyclic for every exact and $\class{B}^{\otimes}$-acyclic complex of projectives $Q$. Equivalently, $\Ext^1_{\ch}(Q,S^0V) = 0$ for all such $Q$. 
\end{lemma}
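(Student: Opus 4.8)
The plan is to unwind the definition of $\class{PGF}_{\class{B}}$ and reduce the Ext-vanishing condition, module by module, to the acyclicity of a $\Hom$-complex. First I would recall that $N \in \class{PGF}_{\class{B}}$ means $N = Z_0 Q$ for some exact $\class{B}^\otimes$-acyclic complex of projectives $Q$, and that, since such a $Q$ is exact, all of its cycle modules $Z_n Q$ are again of this form (the ``shifted'' complexes $Q[n]$ are still exact, still consist of projectives, and tensoring commutes with the shift, so they remain $\class{B}^\otimes$-acyclic). Hence $V \in \class{V}$ if and only if $\Ext^1_R(Z_n Q, V) = 0$ for \emph{all} $n \in \Z$ and all such $Q$.

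The second step is the standard translation between simultaneous vanishing of $\Ext^1$ on all cycles of an exact complex of projectives and exactness of a $\Hom$-complex. Given an exact complex of projectives $Q$, for each $n$ we have a short exact sequence $0 \to Z_n Q \to Q_n \to Z_{n-1} Q \to 0$ with $Q_n$ projective; applying $\Hom_R(-,V)$ and using $\Ext^1_R(Q_n, V) = 0$ gives $\Ext^1_R(Z_{n-1}Q, V) \cong \cok\bigl(\Hom_R(Q_n,V) \to \Hom_R(Z_n Q, V)\bigr)$, while the image of $\Hom_R(Q_{n+1},V) \to \Hom_R(Q_n, V)$ is $\Hom_R(Z_{n-1}Q, V)$ precisely when $\Hom_R(Z_{n-1}Q,-) $ sees the sequence above as short exact, i.e. again when $\Ext^1_R(Z_n Q, V) = 0$. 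Splicing these identifications over all $n$ shows that the homology of $\Hom_R(Q,V)$ in each degree vanishes if and only if $\Ext^1_R(Z_n Q, V) = 0$ for all $n$. This is a routine diagram chase, so I would only sketch it. Combining with the first step yields the first stated equivalence: $V \in \class{V}$ iff $\Hom_R(Q,V)$ is acyclic for every exact $\class{B}^\otimes$-acyclic complex of projectives $Q$.

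For the ``Equivalently'' clause, I would invoke the standard computation of $\Ext^1$ in the category of chain complexes: $\Ext^1_{\ch}(Q, S^0 V)$ classifies degreewise-split (since $Q$ has projective, hence in particular degreewise-projective, components, every extension by $S^0 V$ is degreewise split) extensions $0 \to S^0 V \to E \to Q \to 0$, and a short computation — or the known formula $\Ext^1_{\ch}(Q, S^0 V) \cong H_0\bigl(\homcomplex_R(Q, S^0 V)\bigr)$ together with $H_n(\homcomplex_R(Q,S^0V)) \cong H_n(\Hom_R(Q,V))$ up to reindexing, plus the fact that all shifts $Q[n]$ are again admissible test complexes — shows that $\Ext^1_{\ch}(Q, S^0 V) = 0$ for all such $Q$ is equivalent to $\Hom_R(Q,V)$ being acyclic for all such $Q$. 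The main (only) obstacle is bookkeeping: being careful that the class of admissible test complexes $Q$ is closed under the shifts needed to promote ``$H_0 = 0$ for all $Q$'' to ``$H_n = 0$ for all $n$ and all $Q$,'' which holds because exactness, having projective components, and $\class{B}^\otimes$-acyclicity are all shift-stable. No genuinely hard step is involved; this lemma is essentially the complex-level reformulation of the defining Ext-orthogonality.
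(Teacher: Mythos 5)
Your proposal is correct and follows essentially the same route as the paper's proof: use shift-stability of the class of exact $\class{B}^{\otimes}$-acyclic complexes of projectives to reduce membership in $\class{V}$ to $\Ext^1_R(Z_nQ,V)=0$ for all $n$, translate this into exactness of $\Hom_R(Q,V)=\homcomplex(Q,S^0V)$, and identify the latter (since $Q$ has projective components, so every extension by $S^0V$ is degreewise split) with vanishing of $\Ext^1_{\ch}(Q,S^0V)$ over all such $Q$. The only cosmetic difference is that you spell out the splicing argument and the (reindexing-sensitive) identification of $\Ext^1_{\ch}$ with a homology group of the Hom-complex, which the paper simply asserts.
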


\begin{proof}
Note that the class of all exact $\class{B}^{\otimes}$-acyclic complexes of projectives $Q$, is closed under suspensions. So we have that $V \in \rightperp{\class{PGF}_{\class{B}}}$ if and only if we have $\Ext^1_R(Z_nQ,V) = 0$ for all such $Q$. Since $Q$ is an exact complex of projectives,  this happens if and only if $\Hom_R(Q,V)$ is exact for all such $Q$. But $\Hom_R(Q,V)=\homcomplex(Q,S^0V)$, and since $Q$ is a complex of projectives this complex is exact if and only if $\Ext^1_{\ch}(Q,S^0V) = 0$ for all such $Q$.
\end{proof}

Next we have an analog of Proposition~\ref{proposition-relative-G-Inj} for the class of $\mathcal{PGF}_{\mathcal{B}}$ modules:

\begin{proposition}[Analog of Proposition~\ref{proposition-relative-G-Inj} ]\label{prop-proj-coresolved-B-flat}
Let $\class{B}$ be a class of $R^\circ$-modules containing the injectives.
Suppose every module $M$ has a special $\mathcal{PGF}_{\mathcal{B}}$-precover. Then $(\mathcal{PGF}_{\mathcal{B}}, \mathcal{V})$ is a complete hereditary cotorsion pair. In fact, $(\class{PGF}_{\class{B}}, \class{V}, All)$ is a projective abelian model structure on $R$-Mod. 

Moreover,  Gorenstein $\class{B}$-flat modules are closed under extensions and $(\class{GF}_{\class{B}}, \class{V}, \class{C})$ is a cofibrantly generated abelian model structure on $R$-Mod. That is, the \textbf{Gorenstein $\class{B}$-flat model structure} of Theorem~\ref{thm-Gor-flat-mod} exists and shares the same class of trivial objects as the projective model structure.  
\end{proposition}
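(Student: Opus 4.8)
The plan is to prove the first assertion by dualizing the proof of Proposition~\ref{proposition-relative-G-Inj}, and then to obtain the ``moreover'' statement by feeding the resulting projective model structure into Theorem~\ref{thm-Gor-flat-mod} via Enochs' flat cotorsion pair $(\class{F},\class{C})$. For the first part, I would first record the duals of Lemmas~\ref{lemma-characterize-G-Inj}--\ref{lemma-relative-G-Inj-thick} for the class $\class{PGF}_{\class{B}}$: it is closed under direct sums and direct summands (the argument of Marco P\'erez carries over verbatim), it is projectively resolving, $\class{PGF}_{\class{B}}\cap\class{V}$ is precisely the class of projective modules, and $\class{V}$ is thick; here Lemma~\ref{lemma-proj-perp} plays the role of the characterization of $\class{W}$ used in Section~\ref{sec-relative-G-inj}. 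Granting a special $\class{PGF}_{\class{B}}$-precover $0\to V\to N\to M\to 0$ of an arbitrary $M\in\leftperp{\class{V}}$, the sequence splits, so $M$ is a summand of $N\in\class{PGF}_{\class{B}}$, whence $M\in\class{PGF}_{\class{B}}$; thus $(\class{PGF}_{\class{B}},\class{V})$ is a cotorsion pair with enough projectives, the Salce trick supplies enough injectives, and it is complete and hereditary. Since $\class{V}$ is thick with $\class{PGF}_{\class{B}}\cap\class{V}$ the projectives, Hovey's correspondence yields the projective abelian model structure $(\class{PGF}_{\class{B}},\class{V},All)$. (Unlike the injective case, we do \emph{not} claim this cotorsion pair is perfect: the argument in Proposition~\ref{proposition-relative-G-Inj} used~\cite[Prop.~3.1]{gillespie-ding-modules} and~\cite[Theorem~7.2.6]{enochs-jenda-book}, which have no available duals.)

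For the ``moreover'' part, I would first establish closure of $\class{GF}_{\class{B}}$ under extensions. This is exactly the phenomenon isolated by {\v{S}}aroch--{\v{S}}\v{t}ov{\'{\i}}{\v{c}}ek~\cite[Theorem~4.11]{saroch-stovicek-G-flat} and, in the relative setting, by~\cite[Theorem~2.14]{estrada-iacob-perez-G-flat}: their argument needs only the complete cotorsion pair $(\class{PGF}_{\class{B}},\class{V})$ just obtained, together with the elementary observations that every $N\in\class{GF}_{\class{B}}$ sits in a short exact sequence $0\to N\to F\to N'\to 0$ with $F$ flat and $N'\in\class{GF}_{\class{B}}$ (take $N=Z_0F$ for an exact $\class{B}^{\otimes}$-acyclic complex of flats and use $0\to Z_0F\to F_0\to Z_{-1}F\to 0$), and that such an $N$ satisfies $\Tor_i^{R^\circ}(B,N)=0$ for all $B\in\class{B}$, $i\geq 1$. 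Once extension closure is in hand, Theorem~\ref{thm-Gor-flat-mod} produces the Gorenstein $\class{B}$-flat model structure, cofibrantly generated and built from the cotorsion pairs $(\class{GF}_{\class{B}},\class{GC}_{\class{B}})$ and $(\class{F},\class{C})$; write $\class{W}'$ for its class of trivial objects, which is the unique thick class with $\class{GF}_{\class{B}}\cap\class{W}'=\class{F}$ and $\class{W}'\cap\class{C}=\class{GC}_{\class{B}}$.

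It then remains to show $\class{W}'=\class{V}$, equivalently that $(\class{GF}_{\class{B}},\class{V},\class{C})$ is a Hovey triple; by uniqueness of the trivial class given the cofibrant and fibrant classes, and since $(\class{F},\class{C})$ and $(\class{GF}_{\class{B}},\class{GC}_{\class{B}})$ are already complete, this comes down to the two set equalities $\class{GF}_{\class{B}}\cap\class{V}=\class{F}$ and $\class{V}\cap\class{C}=\class{GC}_{\class{B}}$. The inclusions ``$\supseteq$'' are routine: $\class{GC}_{\class{B}}=\rightperp{\class{GF}_{\class{B}}}\subseteq\rightperp{\class{PGF}_{\class{B}}}\cap\rightperp{\class{F}}=\class{V}\cap\class{C}$ since $\class{PGF}_{\class{B}},\class{F}\subseteq\class{GF}_{\class{B}}$, while a flat module $F$ lies in $\class{V}$ because $F^+$ is injective, hence in $\class{B}$, so $\Hom_R(Q,F^{++})\cong(F^+\otimes_R Q)^+$ is exact for every exact $\class{B}^{\otimes}$-acyclic complex of projectives $Q$ (the adjunction used in the proof of Theorem~\ref{them-projectivecomplexes}), and one descends from $F^{++}$ to $F$ along the pure sequence $0\to F\to F^{++}\to F^{++}/F\to 0$, using that flat-cotorsion modules lie in $\class{V}$ (the identity $\class{GF}_{\class{B}}\cap\class{GC}_{\class{B}}=\class{F}\cap\class{C}$ forces $\class{F}\cap\class{C}\subseteq\class{GC}_{\class{B}}\subseteq\class{V}$) and that flats are deconstructed by flat-cotorsion approximations. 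I expect the reverse inclusions $\class{GF}_{\class{B}}\cap\class{V}\subseteq\class{F}$ and $\class{V}\cap\class{C}\subseteq\class{GC}_{\class{B}}$ to be the main obstacle: here one genuinely has to combine the complex-of-projectives description of $\class{V}$ (Lemma~\ref{lemma-proj-perp}) with the $\class{PGF}_{\class{B}}$-precover machinery of the first part and Gorenstein-flat homological algebra in the style of~\cite{saroch-stovicek-G-flat, estrada-iacob-perez-G-flat} --- the idea being that an object of $\class{GF}_{\class{B}}\cap\class{V}$ admits a projective resolution whose syzygies stay in $\class{GF}_{\class{B}}\cap\class{V}$, which one splices against a coproper flat coresolution to force flatness, with a dual Ext-shifting argument for the cotorsion side. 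Cofibrant generation of $(\class{GF}_{\class{B}},\class{V},\class{C})$ is then inherited from Theorem~\ref{thm-Gor-flat-mod}.
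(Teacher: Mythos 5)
There is a genuine gap, and it sits at the two places your argument needs to do real work. First, your claim that $\class{PGF}_{\class{B}}\cap\class{V}$ is precisely the class of projectives (needed already for the projective model structure in the first assertion), and later that every flat module lies in $\class{V}=\rightperp{\class{PGF}_{\class{B}}}$, is not a formal dual of Lemma~\ref{lemma-injective-heart}: in the injective case $\class{B}\subseteq\class{W}$ is automatic because the acyclicity condition is $\Hom(\class{B},-)$-based, whereas here acyclicity is tensor-based against $R^\circ$-modules and says nothing directly about $\Ext^1_R(-,P)$ for $P$ projective or flat. Your proposed descent --- $F^{++}\in\class{V}$ via $\Hom_R(Q,F^{++})\cong(F^+\otimes_R Q)^+$, then descend along $0\to F\to F^{++}\to F^{++}/F\to 0$ --- does not go through: over a general ring $F^{++}$ need not be flat (that requires coherence), there is no reason $F^{++}/F$ lies in the thick class $\class{V}$, and flat modules are not filtered by flat-cotorsion modules, so no deconstruction of the indicated kind is available. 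The statement ``flats lie in $\rightperp{\class{PGF}}$'' is exactly the content of {\v{S}}aroch--{\v{S}}\v{t}ov{\'{\i}}{\v{c}}ek's Theorem~4.4/Cor.~4.5 (Corollary~\ref{cor-ss}), a genuinely deep result; the paper does not reprove it but invokes it, first to get the projectives into $\class{V}$ (so that \cite[Proposition~3.4]{bravo-gillespie-hovey} yields the Hovey triple $(\class{PGF}_{\class{B}},\class{V},All)$) and then to get the flats into $\class{V}$. If your purity argument worked, their theorem that PGF modules are Gorenstein projective would be an easy exercise.

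Second, the inclusion $\class{GF}_{\class{B}}\cap\class{V}\subseteq\class{F}$, which you correctly flag as the main obstacle, is left as a sketch (``splice against a coproper flat coresolution\dots''), and that is the other step the paper actually has to prove. Its argument: write $M=Z_0F$ with $F$ an exact $\class{B}^{\otimes}$-acyclic complex of flats, use the complete cotorsion pair $(\dwclass{P},\rightperp{(\dwclass{P})})$ in $\ch$ to produce $0\to F\to W\to P\to 0$ with $P\in\dwclass{P}$; Neeman's theorem \cite{neeman-flat} forces $W\in\tilclass{F}$, the degreewise splitness shows $P$ is exact and $\class{B}^{\otimes}$-acyclic, so $Z_0P\in\class{PGF}_{\class{B}}$, and the snake-lemma sequence $0\to Z_0F\to Z_0W\to Z_0P\to 0$ splits since $Z_0F\in\class{V}$, exhibiting $M$ as a summand of the flat module $Z_0W$. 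Note also that the paper never verifies $\class{V}\cap\class{C}=\class{GC}_{\class{B}}$ directly as you plan to; once $\class{GF}_{\class{B}}\cap\class{V}=\class{F}$ and $\class{F}\subseteq\class{V}$ are in hand, it identifies $\class{V}$ with the trivial class of Theorem~\ref{thm-Gor-flat-mod} by citing \cite[Proposition~3.2]{gillespie-recollement} together with \cite[Lemma~2.3(1)]{gillespie-models-for-hocats-of-injectives}. The rest of your outline (splitting plus Salce, hereditariness by dimension shifting, thickness of $\class{V}$ via Lemma~\ref{lemma-proj-perp}, and extension closure of $\class{GF}_{\class{B}}$ via the relative Theorem~4.11 characterization $\class{GF}_{\class{B}}=\leftperp{(\class{C}\cap\class{V})}$) does match the paper, but as it stands the proposal is missing the two essential ingredients above.
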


\begin{proof}
We show that $^\bot \class{V} \subseteq \mathcal{PGF}_{\mathcal{B}}$, and therefore $(\mathcal{PGF}_{\mathcal{B}}, \class{V})$ is a cotorsion pair. Let $M \in \leftperp{\class{V}}$. Consider an exact sequence $0 \rightarrow A \rightarrow D \rightarrow M \rightarrow 0$ with $D \in \mathcal{PGF}_{\mathcal{B}}$ and $A \in \class{V} = \mathcal{PGF}_{\mathcal{B}}^\bot$. Since $\Ext^1_R(M,A) =0$ we have $D \cong A \oplus M$, so $M \in \mathcal{PGF}_{\mathcal{B}}$. Indeed $\mathcal{PGF}_{\mathcal{B}}$ is closed under direct summands for the following reason.  It is shown in~\cite[Theorem~2.10]{estrada-iacob-perez-G-flat} that $\mathcal{PGF}_{\mathcal{B}}$ is a resolving class, (as long as $\class{B}$ contains all the injective modules). It is clearly closed under direct sums as well. Therefore, $\mathcal{PGF}_{\mathcal{B}}$ is closed under direct summands by~\cite[Prop.~1.4]{holm}. Therefore, $(\mathcal{PGF}_{\mathcal{B}}, \mathcal{V})$ is a cotorsion pair with enough projectives. Therefore, it also has enough injectives by the Salce trick~\cite[Prop.~7.1.7]{enochs-jenda-book}. So $(\mathcal{PGF}_{\mathcal{B}}, \mathcal{V})$ is a complete cotorsion pair. 

The pair is hereditary: if $N \in  \mathcal{PGF}_{\mathcal{B}}$ then, by definition, there is an exact sequence $0 \rightarrow N' \rightarrow P \rightarrow N \rightarrow 0$ with $P$ projective and $N' \in  \mathcal{PGF}_{\mathcal{B}}$. Then for any $V \in \class{V}$, the exact sequence $0 = \Ext^1_R(N',V) \rightarrow \Ext^2_R(N,V) \rightarrow \Ext^2_R(P,V)=0$ gives that $\Ext^2_R(N,V)=0$. Similarly, $\Ext^i_R(N,V)=0$ for all $i \ge 1$, and all $V \in \class{V}$.

Any right orthogonal class, in particular $\mathcal{V}$, is closed under direct summands. The fact that the class $\mathcal{V}$ has the 2 out of 3 property on short exact sequences follows from Lemma~\ref{lemma-proj-perp}: For every exact and $\class{B}^{\otimes}$-acyclic complex of projectives $Q$, apply the functor $Hom_R(Q,-)$ to any short exact sequence of $R$-modules. The 2 out of 3 property for exactness of cochain complexes gives the result.

Since we have $(\mathcal{PGF}_{\mathcal{B}}, \mathcal{V})$ is a complete cotorsion pair and $\mathcal{V}$ is thick, we will get the projective abelian model structure  $(\class{PGF}_{\class{B}}, \class{V}, All)$ by applying~\cite[Proposition~3.4]{bravo-gillespie-hovey}, once we see that $\mathcal{V}$ contains all projective modules. But every module in $\mathcal{PGF}_{\mathcal{B}}$ is a projectively coresolved Gorenstein flat module in the sense of  {\v{S}}aroch and {\v{S}}\v{t}ov{\'{\i}}{\v{c}}ek~\cite{saroch-stovicek-G-flat}, because we are assuming $\class{B}$ contains all injectives. A key result they show in~\cite[Theorem~4.4]{saroch-stovicek-G-flat} (see Corollary~\ref{cor-ss}) is that every such module is Gorenstein projective. It follows that $\class{V}$ contains all projective modules. So $(\class{PGF}_{\class{B}}, \class{V}, All)$ is a projective abelian model structure on $R$-Mod. 

In fact, it follows from {\v{S}}aroch and {\v{S}}\v{t}ov{\'{\i}}{\v{c}}ek's~\cite[Theorem~4.4]{saroch-stovicek-G-flat} (see Corollary~\ref{cor-ss}) that $\class{V}$ contains all flat modules. Therefore, the claim that $\class{V}$ is also the class of trivial objects in the Gorenstein $\class{B}$-flat model structure will follow immediately from~\cite[Proposition~3.2]{gillespie-recollement} combined with~\cite[Lemma~2.3(1)]{gillespie-models-for-hocats-of-injectives}, once we show $\mathcal{GF}_{\mathcal{B}} \cap \class{V} = \class{F}$, where $\class{F}$ is the class of all flat modules. Below we do this by adapting the argument from~\cite[Proposition~5.2]{estrada-gillespie-coherent-schemes}.

From the above comments we have $\class{F} \subseteq \mathcal{GF}_{\mathcal{B}} \cap \class{V}$, so we focus on showing the reverse containment $\mathcal{GF}_{\mathcal{B}} \cap \class{V} \subseteq \class{F}$.  So let $M \in \mathcal{GF}_{\mathcal{B}} \cap \class{V}$, and write it as $M = Z_0F$ where $F$ is an exact $\class{B}^{\otimes}$-acyclic complex of flat modules.  From~\cite[Cor.~6.4]{bravo-gillespie-hovey} or~\cite[Them.~4.2(1)/Prop.~1.7]{stovicek-deconstructible} we have a complete cotorsion pair $(\dwclass{P}, \rightperp{(\dwclass{P})})$, where $\dwclass{P}$ is the class of all complexes of projectives. So we may write a short exact sequence $$0 \xrightarrow{} F  \xrightarrow{} W  \xrightarrow{} P  \xrightarrow{} 0 $$ with $W \in  \rightperp{(\dwclass{P})}$ and $P \in \dwclass{P}$. But then using Neeman's result from~\cite{neeman-flat} (a statement in the notation we are using is also given in~\cite[Lemma~4.3]{estrada-gillespie-coherent-schemes}), one easily argues that $W \in \tilclass{F}$, the class of all exact complexes with all cycle modules flat. Since $F$ and $W$ are each exact, we see that $P$ is exact too. Moreover,  the short exact sequence is split in each degree, so tensoring with any $B \in \class{B}$, yields another short exact sequence. So since  $F$ and $W$ are each exact and $\class{B}^{\otimes}$-acyclic complexes, it follows that $P$ is an exact $\class{B}^{\otimes}$-acyclic complex too. Therefore $Z_0P$ is a projectively coresolved Gorenstein $\class{B}$-flat module. Note that by the snake lemma we get a short exact sequence $0 \xrightarrow{} Z_0F  \xrightarrow{} Z_0W  \xrightarrow{} Z_0P  \xrightarrow{} 0$. By the hypothesis, $M = Z_0F \in \class{V}$, and so we conclude  that this sequence splits. Since $Z_0W$ is flat, so is the direct summand $Z_0F$, proving $\mathcal{GF}_{\mathcal{B}} \cap \class{V} \subseteq \class{F}$.

It remains to see that the Gorenstein $\class{B}$-flat modules are closed under extensions. The reader can verify that  {\v{S}}aroch and {\v{S}}\v{t}ov{\'{\i}}{\v{c}}ek's characterizations of Gorenstein flat modules given in~\cite[Theorem~4.11]{saroch-stovicek-G-flat} generalize to any class $\class{B}$ containing the injectives and such that $(\mathcal{PGF}_{\mathcal{B}}, \mathcal{V})$ is a complete cotorsion pair. See also~\cite[Theorem~2.14]{estrada-iacob-perez-G-flat}; the proof of Estrada-Iacob-P\'erez also illustrates that the characterizations hold whenever $\class{B}$ contains the injectives and $(\mathcal{PGF}_{\mathcal{B}}, \mathcal{V})$ is a complete cotorsion pair. We state these characterizations in a Remark below. One of the characterizations that carry over is that a module $M$ is Gorenstein $\class{B}$-flat if and only if it is in the class $\leftperp{(\class{C} \cap \class{V})}$, where $\class{C}$ is the class of cotorsion modules. This class is closed under extensions, so Theorem~\ref{thm-Gor-flat-mod} applies. 
\end{proof}

Here is the promised Remark concerning~\cite[Theorem~4.11]{saroch-stovicek-G-flat}.
\begin{remark}
In addition to our blanket assumption that $\class{B}$ contains all injectives, suppose we know $(\mathcal{PGF}_{\mathcal{B}}, \mathcal{V})$ is a complete cotorsion pair. 
Then the following conditions are equivalent for an $R$-module $M$.
\begin{enumerate}
\item$M$ is Gorenstein $\mathcal{B}$-flat.
\item There is a short exact sequence of modules
\[
0 \to  F \to L \to M \to 0
\]
with $F$ flat and $L \in \mathcal{PGF}_{\mathcal{B}}$, which is also $\Hom_R(-,\mathcal{C})$-acyclic, where $\mathcal{C}$ is the class of cotorsion modules.
\item $\Ext^1_R(M,C) = 0$ for every $C \in \mathcal{C} \cap \mathcal{V}$. That is, $M \in \leftperp{(\mathcal{C} \cap \mathcal{V})}$. 
\item There is a short exact sequence of modules
\[
0 \to M \to F \to L \to 0
\]
with $F$ flat and $L \in \mathcal{PGF}_{\mathcal{B}}$.
\end{enumerate}
\end{remark}

\begin{proposition}[Analog of Proposition~\ref{prop-injective model on complexes}]\label{prop-proj-coresolved model on complexes}
Let $\class{B}$ be any class of $R^\circ$-modules for which there exists a set (not just a class) $\class{S} \subseteq \class{B}$ such that each $B \in \class{B}$ is a transfinite extension of modules in $\class{S}$. 
Then there is a cofibrantly generated projective abelian model structure on the category of chain complexes whose cofibrant objects are the 
exact $\class{B}^{\otimes}$-acyclic complexes of projectives. We call this the \textbf{exact $\boldsymbol{\class{B}^{\otimes}}$-acyclic projective model structure}.
\end{proposition}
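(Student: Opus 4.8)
The plan is to realize the desired model structure as the projective dual of the exact $\Hom(\class{B},-)$-acyclic injective model structure of Proposition~\ref{prop-injective model on complexes}, transcribing the detailed construction of~\cite[Lemma~3.3]{gillespie-duality-pairs} with ``injective'' replaced by ``projective'' throughout. Write $\class{C}$ for the class of exact $\class{B}^{\otimes}$-acyclic complexes of projective $R$-modules, write $\tilclass{P}$ for the class of contractible complexes of projectives --- equivalently, the projective objects of $\ch$ --- and set $\class{W} := \rightperp{\class{C}}$, orthogonality taken in $\ch$. The target is the Hovey triple $(\class{C}, \class{W}, \textnormal{All})$, whose two cotorsion pairs are $(\class{C}, \class{W})$ and the trivial pair $(\tilclass{P}, \textnormal{All})$; so by Gillespie's recipe for abelian model structures (\cite[Theorem~1.2]{gillespie-hovey triples}, or~\cite[Prop.~3.4]{bravo-gillespie-hovey}) it suffices to verify: $\tilclass{P} \subseteq \class{C}$; $(\class{C}, \class{W})$ is a complete hereditary cotorsion pair cogenerated by a \emph{set}; $\class{W}$ is thick; and $\class{C} \cap \class{W} = \tilclass{P}$. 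Cofibrant generation is then automatic, both cotorsion pairs being cogenerated by sets.

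First I would reduce $\class{B}^{\otimes}$-acyclicity of a complex of projectives to a single test module. Put $t := R \oplus \bigoplus_{N \in \class{S}} N$, regarded as an $R^\circ$-module. For any complex $X$ of projective $R$-modules I claim $X \in \class{C}$ if and only if $t \tensor_R X$ is exact. Since $t \tensor_R X \cong X \oplus \bigoplus_{N \in \class{S}} (N \tensor_R X)$, the content is that ``$X$ exact and $N \tensor_R X$ exact for all $N \in \class{S}$'' already forces $B \tensor_R X$ exact for every $B \in \class{B}$. Given such $B$, write it as a transfinite extension $(B_\alpha)_{\alpha \le \lambda}$ of modules in $\class{S}$. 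Each short exact sequence $0 \to B_\alpha \to B_{\alpha+1} \to B_{\alpha+1}/B_\alpha \to 0$ stays exact after applying $- \tensor_R X$, as every component of $X$ is flat (no $\Tor_1$ obstruction), and filtered colimits are preserved at limit stages; hence $(B_\alpha \tensor_R X)$ is a transfinite extension of the exact complexes $(B_{\alpha+1}/B_\alpha) \tensor_R X$. Since extensions of exact complexes are exact (homology long exact sequence) and filtered colimits of exact complexes are exact, $B \tensor_R X = B_\lambda \tensor_R X$ is exact. (The summand $R$ of $t$ serves only to encode exactness of $X$ itself, via $R \tensor_R X \cong X$.)

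Next I would show $\class{C}$ --- now described as the class of complexes of projectives $X$ with $t \tensor_R X$ exact --- is deconstructible. It is closed under transfinite extensions: a short exact sequence of complexes of projectives is split in each degree, so it stays exact after tensoring with any $B \in \class{B}$, and the successor and limit steps are handled exactly as in the previous paragraph; the same degreewise splitting shows $\class{C}$ is closed under kernels of epimorphisms, so $\class{C}$ is resolving (it visibly contains $\tilclass{P}$). Combining closure under transfinite extensions with the deconstructibility of the class $\dwclass{P}$ of all complexes of projectives (\cite[Cor.~6.4]{bravo-gillespie-hovey}, \cite[Them.~4.2(1)/Prop.~1.7]{stovicek-deconstructible}), a Kaplansky/Hill-lemma argument steered by the single module $t$ --- the projective transcription of~\cite[Lemma~3.3]{gillespie-duality-pairs}, and the point at which one imports Stovicek's work on complexes of projectives (as in the proof of Theorem~\ref{them-projectivecomplexes}) --- produces a set $\class{S}' \subseteq \class{C}$ of small complexes such that every object of $\class{C}$ is a transfinite extension of members of $\class{S}'$. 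By the standard Eklof--Trlifaj argument (see e.g.~\cite[Prop.~1.7]{stovicek-deconstructible}) this yields a complete cotorsion pair $(\class{C}, \rightperp{\class{C}})$ in $\ch$, cogenerated by a set, with $\class{C} = \leftperp{(\rightperp{\class{C}})}$; and $\class{C}$ is closed under direct summands, either directly from the filtration or via~\cite[Prop.~1.4]{holm}. Heredity of the pair is immediate from $\class{C}$ being resolving.

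It remains to pin down the core and thickness, then assemble. Contractible complexes of projectives are exact and stay contractible (hence exact) after tensoring with any module, so $\tilclass{P} \subseteq \class{C}$; and since every $X \in \class{C}$ has projective components, $\Ext^1_{\ch}(X, D) = 0$ for every $D \in \tilclass{P}$, so $\tilclass{P} \subseteq \rightperp{\class{C}} = \class{W}$. Conversely, given $X \in \class{C} \cap \class{W}$, embed $X$ via the canonical degreewise-split monomorphism into a contractible complex of projectives $P'$, with cokernel $Q$; as the embedding is split in each degree, $Q$ has projective components, is exact (both $X$ and $P'$ being exact), and is $\class{B}^{\otimes}$-acyclic (tensoring the degreewise-split sequence with any $B$ keeps it exact), so $Q \in \class{C}$; then $\Ext^1_{\ch}(Q, X) = 0$ since $X \in \rightperp{\class{C}}$, so $0 \to X \to P' \to Q \to 0$ splits and $X$ is a direct summand of $P'$, hence contractible. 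Thus $\class{C} \cap \class{W} = \tilclass{P}$. Thickness of $\class{W} = \rightperp{\class{C}}$: closure under summands and extensions is automatic, and the $2$-out-of-$3$ property follows as in Lemma~\ref{lemma-proj-perp} --- $\class{W}$ is exactly the class of complexes $V$ with $\homcomplex(X, V)$ acyclic for all $X \in \class{C}$, using that $\class{C}$ is closed under suspension and consists of exact complexes of projectives, and $2$-out-of-$3$ for acyclicity of Hom-complexes transfers. Feeding the cotorsion pairs $(\class{C}, \rightperp{\class{C}})$ and $(\tilclass{P}, \textnormal{All})$, the equality $\class{C} \cap \rightperp{\class{C}} = \tilclass{P}$, and thickness of $\rightperp{\class{C}}$ into Gillespie's recipe produces the cofibrantly generated projective abelian model structure $(\class{C}, \rightperp{\class{C}}, \textnormal{All})$ on $\ch$. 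The step I expect to be the main obstacle is the deconstructibility of $\class{C}$ --- equivalently, getting $(\class{C}, \rightperp{\class{C}})$ cogenerated by a set rather than a proper class --- which is exactly where the hypothesis that each $B \in \class{B}$ is a transfinite extension of members of the fixed \emph{set} $\class{S}$ is indispensable, and where Stovicek's deconstructibility results for complexes of projectives must be invoked, just as Proposition~\ref{prop-injective model on complexes} relied on~\cite[Theorem~4.1]{bravo-gillespie-hovey}; everything else is a routine transcription of the injective case and of the $\class{PGF}_{\class{B}}$ argument in Proposition~\ref{prop-proj-coresolved-B-flat}.
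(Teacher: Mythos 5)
Your reduction to a single test module is exactly the paper's argument: the paper's proof consists of observing that a complex $P$ of projectives is exact and $\class{B}^{\otimes}$-acyclic if and only if $(R\oplus B)\otimes_R P$ is exact, where $B=\bigoplus_{N\in\class{S}}N$, and then citing~\cite[Theorem~6.1]{bravo-gillespie-hovey}, which is precisely the statement that for a single test module there is a cofibrantly generated projective abelian model structure on $\ch$ whose cofibrant objects are these complexes. Your first paragraph carries out that reduction correctly (componentwise flatness kills the $\Tor_1$ obstruction at successor stages, exactness passes to filtered colimits), and your verifications that the contractible complexes of projectives form the core $\class{C}\cap\rightperp{\class{C}}$ and that $\rightperp{\class{C}}$ is thick are fine.

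The genuine gap is in the middle: everything after the reduction is a re-derivation of~\cite[Theorem~6.1]{bravo-gillespie-hovey} rather than a citation of it, and the one step you do not actually prove --- that $\class{C}$ is deconstructible, equivalently that $(\class{C},\rightperp{\class{C}})$ is a complete cotorsion pair cogenerated by a set --- is the entire technical content of that theorem. ``Closure of $\class{C}$ under transfinite extensions plus deconstructibility of $\dwclass{P}$ plus a Hill-lemma argument steered by $t$'' is not a proof: deconstructibility of an ambient class does not pass to a subclass closed under transfinite extensions, and the Hill family attached to a $\dwclass{P}$-filtration of some $X\in\class{C}$ gives no control, by itself, over exactness of $t\otimes_R Y$ and $t\otimes_R(X/Y)$ for the selected subcomplexes $Y$; arranging that simultaneously is exactly the cardinality/purity filtration argument carried out in~\cite{bravo-gillespie-hovey} (and in {\v{S}}\v{t}ov{\'{\i}}{\v{c}}ek's work) for complexes of projectives. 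You flag this yourself as ``the main obstacle,'' but flagging it does not close it. The repair is immediate: after your test-module reduction, invoke \cite[Theorem~6.1]{bravo-gillespie-hovey} as the paper does, which also renders your core and thickness computations unnecessary (though correct).
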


\begin{proof}
This follows from~\cite[Theorem~6.1]{bravo-gillespie-hovey}.
One can check that a complex $P$ of projective modules is exact and $\class{B}^{\otimes}$-acyclic if and only if it is exact upon tensoring with $R\oplus B$, where $B$ is the single ``test module'' $B = \bigoplus_{N \in S} N$. Therefore, we get from~\cite[Theorem~6.1]{bravo-gillespie-hovey}, a cofibrantly generated abelian model structure on $\ch$, where the cofibrant objects are the exact $\class{B}^{\otimes}$-acyclic complexes of projectives. 
\end{proof}

\begin{theorem}[Analog of Theorem~\ref{thm-Gor-module}]\label{theorem-proj-coresolved-B-flat}
Let $\class{B}$ be a class of $R^\circ$-modules containing the injectives.  Assume there exists a set (so again, not just a class) $\class{S} \subseteq \class{B}$ such that each $B \in \class{B}$ is a transfinite extension of modules in $\class{S}$. 
Then there is a cofibrantly generated projective abelian model structure on $R$-Mod, the \textbf{projectively coresolved Gorenstein $\class{B}$-flat model structure}, whose cofibrant objects are the projectively coresolved Gorenstein $\class{B}$-flat modules
In particular, $(\mathcal{PGF}_{\mathcal{B}}, \mathcal{V})$ is a complete hereditary cotorsion pair, cogenerated by a set. 

Moreover, the Gorenstein $\class{B}$-flat model structure of Theorem~\ref{thm-Gor-flat-mod} exists and shares the same class $\class{V}$ of trivial objects as the projective model structure.

Finally, the sphere functor $S^0(-) : R\text{-Mod} \xrightarrow{} \ch$ is a right Quillen equivalence from the Gorenstein $\class{B}$-flat (resp. projectively coresolved) model structure to the exact $\class{B}^{\otimes}$-acyclic flat (resp. projective) model structure.
\end{theorem}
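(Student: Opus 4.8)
The plan is to assemble Theorem~\ref{theorem-proj-coresolved-B-flat} from the machinery already built: Proposition~\ref{prop-proj-coresolved-B-flat} handles everything on the module side, Proposition~\ref{prop-proj-coresolved model on complexes} provides the complex-level model structure, and the dual transfer argument of Theorem~\ref{theorem-projectivemodels} upgrades the latter into the former. So the first step is to observe that the hypothesis of Theorem~\ref{theorem-proj-coresolved-B-flat} --- the existence of a set $\class{S}\subseteq\class{B}$ with every $B\in\class{B}$ a transfinite extension of modules in $\class{S}$ --- is exactly the hypothesis of Proposition~\ref{prop-proj-coresolved model on complexes}. Applying that proposition yields the exact $\class{B}^\otimes$-acyclic projective model structure on $\ch$, cofibrantly generated, whose cofibrant objects are the exact $\class{B}^\otimes$-acyclic complexes of projectives. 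This is precisely the ``exact $\Hom(-,\class{B})$-acyclic projective model structure'' hypothesized in Theorem~\ref{theorem-projectivemodels}, with the tensor condition replacing the Hom condition.

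Next I would run the dual transfer argument. The functor $S^0(-)\colon R\text{-Mod}\to\ch$ is right adjoint to $X\mapsto X_0/B_0X$, as recorded in the proof of Theorem~\ref{theorem-projectivemodels}. For any module $M$, I take a short exact sequence $0\to X\to P\to S^0(M)\to 0$ with $P$ an exact $\class{B}^\otimes$-acyclic complex of projectives and $X$ trivial in the complex model structure; applying $(-)_0/B_0(-)$ and the snake lemma gives $0\to X_0/B_0X\to P_0/B_0P\to M\to 0$ with $P_0/B_0P\cong Z_{-1}P$ projectively coresolved Gorenstein $\class{B}$-flat and, by the dualized Lemma~4.4 of~\cite{gillespie-duality-pairs} (exactly as in the proof of Theorem~\ref{theorem-projectivemodels}), $X_0/B_0X\in\class{V}$. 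This exhibits special $\class{PGF}_{\class{B}}$-precovers, so Proposition~\ref{prop-proj-coresolved-B-flat} applies and delivers the complete hereditary cotorsion pair $(\class{PGF}_{\class{B}},\class{V})$, cofibrant generation coming from the cofibrant generation of the complex model structure via the adjunction. The Quillen equivalence between the projectively coresolved Gorenstein $\class{B}$-flat model structure and the exact $\class{B}^\otimes$-acyclic projective model structure then follows from the generalization of~\cite[Theorem~8.8]{bravo-gillespie-hovey}, again as in Theorem~\ref{theorem-projectivemodels}.

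For the Gorenstein $\class{B}$-flat half of the statement, I note that Proposition~\ref{prop-proj-coresolved-B-flat} already proves, once we have the complete cotorsion pair $(\class{PGF}_{\class{B}},\class{V})$, that the Gorenstein $\class{B}$-flat modules are closed under extensions and that $(\class{GF}_{\class{B}},\class{V},\class{C})$ is a cofibrantly generated abelian model structure sharing the trivial class $\class{V}$ with the projective one. So this part requires nothing new beyond invoking Proposition~\ref{prop-proj-coresolved-B-flat} with the precover hypothesis now verified. The Quillen equivalence for the Gorenstein $\class{B}$-flat model structure to the exact $\class{B}^\otimes$-acyclic flat model structure is proved in parallel: one needs the flat analog of Proposition~\ref{prop-proj-coresolved model on complexes}, which comes from~\cite[Theorem~6.1]{bravo-gillespie-hovey} applied to the class of complexes of flats rather than projectives, together with the same $S^0/(-)_0/B_0(-)$ adjunction and the Quillen equivalence criterion.

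The main obstacle is the verification that $Z_{-1}P\in\class{PGF}_{\class{B}}$ and $X_0/B_0X\in\class{V}$ in the transfer step, i.e.\ correctly dualizing~\cite[Lemma~4.4]{gillespie-duality-pairs} in the tensor setting --- one must check that the trivial objects $X$ of the exact $\class{B}^\otimes$-acyclic projective model structure have $X_0/B_0X$ right-orthogonal to every exact $\class{B}^\otimes$-acyclic complex of projectives, which uses Lemma~\ref{lemma-proj-perp}. Everything else is a matter of checking that the hypotheses of the already-established Propositions and Theorems are met and citing them; the conceptual content is entirely in the earlier results, so the proof should be short, essentially a pointer to Proposition~\ref{prop-proj-coresolved model on complexes}, Proposition~\ref{prop-proj-coresolved-B-flat}, and the dualization pattern of Theorem~\ref{theorem-projectivemodels}.
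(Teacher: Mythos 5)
Your overall architecture matches the paper's: get the exact $\class{B}^{\otimes}$-acyclic projective model structure on complexes from Proposition~\ref{prop-proj-coresolved model on complexes}, transfer along the adjunction $X \mapsto X_0/B_0X \dashv S^0(-)$ via the snake lemma to produce special $\class{PGF}_{\class{B}}$-precovers, then invoke Proposition~\ref{prop-proj-coresolved-B-flat} and prove the Quillen equivalences. But there is a genuine gap exactly at the step you flag as ``the main obstacle'' and then dispose of by saying it follows ``by the dualized Lemma~4.4 of~\cite{gillespie-duality-pairs}, exactly as in the proof of Theorem~\ref{theorem-projectivemodels}.'' That dualization is \emph{not} available verbatim here. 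In Theorem~\ref{theorem-projectivemodels} the class $\class{B}$ contains the projectives, so for any exact $\Hom(-,\class{B})$-acyclic complex of projectives $Q$ the complex $\Hom(Q,P)$ is exact for every projective $P$; hence sphere complexes $S^n(P)$ on projectives are automatically trivial, and the dualized argument goes through. In the tensor setting, $\class{B}^{\otimes}$-acyclicity gives no such Hom-exactness for free: the statement that $S^n(P)$ is trivial (equivalently, by Lemma~\ref{lemma-proj-perp}, that $\class{V}$ contains all projective modules) is precisely {\v{S}}aroch--{\v{S}}\v{t}ov{\'{\i}}{\v{c}}ek's theorem that projectively coresolved Gorenstein flat modules are Gorenstein projective (Corollary~\ref{cor-ss}). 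The paper's proof makes this the centerpiece: it reduces, via Lemma~\ref{lemma-proj-perp}, to showing $S^0(X_0/B_0X)$ is trivial, proves that bounded below exact complexes and bounded above complexes of projectives are trivial (the latter using Corollary~\ref{cor-ss}), and then realizes $S^0(X_0/B_0X)$ as an iterated subquotient of the trivial complex $X$ by such complexes, using thickness of the trivial class. Your proposal names the target but supplies neither this argument nor its essential ingredient, so as written the precover step is unproved.

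Two smaller discrepancies: cofibrant generation does not come ``via the adjunction'' from the complex-level model structure; the paper gets it by noting that the set-filtration of the exact $\class{B}^{\otimes}$-acyclic complexes of projectives (from~\cite[Theorem~6.1]{bravo-gillespie-hovey}) descends to the cycle modules, so $(\class{PGF}_{\class{B}},\class{V})$ is cogenerated by a set, and then cites Hovey. And for the flat half of the Quillen-equivalence claim, the exact $\class{B}^{\otimes}$-acyclic flat model structure on complexes comes from~\cite[Theorem~4.2]{estrada-gillespie-coherent-schemes} (not from applying~\cite[Theorem~6.1]{bravo-gillespie-hovey} to flats), and the equivalence is proved by adapting~\cite[Prop.~5.5]{estrada-gillespie-coherent-schemes}; both of its verification steps (their conditions (i) and (ii)) again lean on the fact that $\class{V}$ contains all flat modules, which is the same {\v{S}}aroch--{\v{S}}\v{t}ov{\'{\i}}{\v{c}}ek input your sketch omits.
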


\begin{proof}
By Proposition~\ref{prop-proj-coresolved-B-flat} we only need to show every module $M$ has a special $\mathcal{PGF}_{\mathcal{B}}$-precover.
But by Proposition~\ref{prop-proj-coresolved model on complexes} we have the exact $\class{B}^{\otimes}$-acyclic projective model structure on chain complexes.
So for any object $M$, we can find a short exact sequence
\[
0 \xrightarrow{} X \xrightarrow{} Q \xrightarrow{}S^0(M) \xrightarrow{} 0
\]
where $Q$ is an exact $\class{B}^{\otimes}$-acyclic complex of projectives and $X$ is
trivial in the exact $\class{B}^{\otimes}$-acyclic projective model structure.  By the snake lemma, we get a short exact sequence
\[
0 \xrightarrow{} X_0/B_0X \xrightarrow{} Q_0/B_0Q \xrightarrow{} M \xrightarrow{} 0
\]
and $Q_0/B_0Q \cong Z_{-1}Q$ is projectively coresolved Gorenstein $\class{B}$-flat, by definition. 
So our goal is to show $X_0/B_0X \in \class{V}$. 
It follows from Lemma~\ref{lemma-proj-perp} that $X_0/B_0X \in \class{V}$ if and only if $S^0(X_0/B_0X)$ is trivial in the exact $\class{B}^{\otimes}$-acyclic projective model structure. So the plan is to show below that $S^0(X_0/B_0X)$ is trivial.

But first we note that any bounded above complex of projective modules is trivial in the exact $\class{B}^{\otimes}$-acyclic projective model structure, and, any bounded below exact complex is also trivial. Indeed for any projective module $P$, we deduce that $S^n(P)$ is trivial from {\v{S}}aroch and {\v{S}}\v{t}ov{\'{\i}}{\v{c}}ek's~\cite[Theorem~4.4]{saroch-stovicek-G-flat} (see Corollary~\ref{cor-ss}) combined with the above Lemma~\ref{lemma-proj-perp}. 
It follows that any bounded above complex of projective modules must also be trivial; for example, see~\cite[Lemma~2.3]{gillespie-AC-proj-complexes}. On the other hand, one easily verifies that for any module $N$, the disk complex $D^n(N)$ is also trivial. So~\cite[Lemma~2.3]{gillespie-AC-proj-complexes} also tells us that any bounded below exact complex is trivial.

With these observations we will argue that $S^0(X_0/B_0X)$ is trivial.
Indeed one can see that the complex $X$ has a subcomplex $A \subseteq X$, where $A$ is the shown bounded below exact complex: $\cdots \xrightarrow{} X_2 \xrightarrow{} X_1 \xrightarrow{} B_0X \xrightarrow{} 0$. As noted above, this complex is trivial, and since $X$ is trivial the quotient $X/A$ is trivial too. We note that this quotient is the complex $0 \xrightarrow{} X_{0}/B_{0}X \xrightarrow{} X_{-1} \xrightarrow{} X_{-2} \xrightarrow{} \cdots$, which in turn has another obvious subcomplex  $0 \xrightarrow{} 0 \xrightarrow{} X_{-1} \xrightarrow{} X_{-2} \xrightarrow{} \cdots$. This is a bounded above complex of projective modules and therefore it too is  trivial. This in turn implies the corresponding quotient complex, which is $S^0(X_{0}/B_{0}X)$, is trivial. This completes the proof that the short exact sequence
\[
0 \xrightarrow{} X_0/B_0X \xrightarrow{} Q_0/B_0Q \xrightarrow{} M \xrightarrow{} 0
\]
is a special $\mathcal{PGF}_{\mathcal{B}}$-precover of $M$, and gives  us the projective model structure corresponding to the Hovey triple $(\mathcal{PGF}_{\mathcal{B}}, \mathcal{V}, All)$. The construction from~\cite[Theorem~6.1]{bravo-gillespie-hovey} shows that the class of all  exact $\class{B}^{\otimes}$-acyclic complexes of projectives is filtered by a set of such complexes. The filtrations descend to a filtration on the cycles and it follows that $(\mathcal{PGF}_{\mathcal{B}}, \mathcal{V})$ is cogenerated by a set. This in turn translates to a cofibrantly generated model structure by~\cite[Section~6]{hovey}.

Again, the functor $S^0(-) : R\text{-Mod} \xrightarrow{} \ch$ is right adjoint to the functor $X \mapsto X_0/B_0X$. By~\cite[Theorem~4.2]{estrada-gillespie-coherent-schemes} we have the exact $\class{B}^{\otimes}$-acyclic flat model structure on chain complexes. We can adapt the proof of~\cite[Prop.~5.5]{estrada-gillespie-coherent-schemes} to show that these functors provide a Quillen equivalence between the flat model structures. (The proof for the projective model structures is similar. In fact, the proof for the flat case is more difficult and the proof in~\cite[Prop.~5.5]{estrada-gillespie-coherent-schemes} \emph{relies} on the existence of projective models.) Indeed the argument there shows that $X \mapsto X_0/B_0X$ preserves cofibrations and trivial cofibrations, making it a left Quillen functor. 
To show that $X \mapsto X_0/B_0X$ is a Quillen equivalence in the flat case boils down to showing the following: (i)  If $X \xrightarrow{f} Y$ is a chain map between two exact $\class{B}^{\otimes}$-acyclic complexes of flats for which the induced map $X_0/B_0X \xrightarrow{\bar{f}} Y_0/B_0Y$ is a weak equivalence, then $f$ itself must be a weak equivalence. (ii) For all cotorsion modules $C$, and short exact sequences
$0 \xrightarrow{} X \xrightarrow{} F \xrightarrow{} S^0C \xrightarrow{} 0$ with $F$ in the class ${}_{\class{B}}\tilclass{F}$ of all exact $\class{B}^{\otimes}$-acyclic complexes of flats, and $X  \in \rightperp{{}_{\class{B}}\tilclass{F}}$, then the induced short exact sequence
 $0 \xrightarrow{} X_0/B_0X \xrightarrow{} F_0/B_0F \xrightarrow{} C \xrightarrow{} 0$ must have $X_0/B_0X \in \class{V}$. 
Note that what is required to be shown for (ii) is exactly the same type of argument we did above where we showed that each module $M$ has a  special $\mathcal{PGF}_{\mathcal{B}}$-precover. In fact, the argument above will work, even with $C$ not assumed to be cotorsion, by again using   
{\v{S}}aroch and {\v{S}}\v{t}ov{\'{\i}}{\v{c}}ek's nontrivial fact from~\cite[Theorem~4.4]{saroch-stovicek-G-flat} (see Corollary~\ref{cor-ss}) that $\class{V}$ contains all flat modules. (The projective and flat models share the same class of trivial objects and each sphere complex $S^n(F)$ is trivial whenever $F$ is flat, by their result.)
To prove the above statement (i), the proof given in~\cite[Proposition~5.5]{estrada-gillespie-coherent-schemes} will readily adapt and yet again uses this fact that the thick class $\class{V}$ of trivial objects contains all flat modules. 
\end{proof}

Note that Theorems~\ref{thm-Gor-module} and~\ref{theorem-proj-coresolved-B-flat} combine to prove Theorem~\ref{them-models} from the Introduction.

\section{Gorenstein modules relative to a complete duality pair}\label{sec-D-modules}

In this section we let $\mathfrak{D} = (\class{L},\class{A})$ denote a semi-complete duality pair with $R$-modules in the projective class $\class{L}$ and $R^\circ$-modules in the injective class $\class{A}$.

\begin{corollary}\label{corollary-models}
The following abelian model structures are induced by $\mathfrak{D} = (\class{L},\class{A})$.
\begin{enumerate}
\item The \textbf{Gorenstein $\mathfrak{D}$-injective model structure} exists on $R^\circ$-Mod. It is a cofibrantly generated injective abelian model structure whose fibrant objects are the Gorenstein $\class{A}$-injective $R^\circ$-modules.
\item The \textbf{Gorenstein $\mathfrak{D}$-projective model structure} exists on $R$-Mod. It is a cofibrantly generated projective abelian model structure whose cofibrant objects are the Gorenstein $\class{L}$-projective $R$-modules, equivalently, the projectively coresolved Gorenstein $\class{A}$-flat $R$-modules.  
\item The \textbf{Gorenstein $\mathfrak{D}$-flat model structure}  exists on $R$-Mod. It is a cofibrantly generated abelian model structure whose cofibrant objects (resp. trivially cofibrant objects) are the Gorenstein $\class{A}$-flat modules (resp. flat modules). Moreover, the trivial objects in this model structure coincide with those in the Gorenstein $\mathfrak{D}$-projective model structure.
\end{enumerate}
\end{corollary}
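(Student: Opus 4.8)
The plan is to obtain all three model structures by running the machinery of Sections~\ref{sec-relative-G-inj} and~\ref{sec-coresolved}, in the packaged form of Theorem~\ref{them-models} (equivalently Theorems~\ref{thm-Gor-module} and~\ref{theorem-proj-coresolved-B-flat}), with the single choice $\class{B} := \class{A}$, the injective class of $\mathfrak{D}$, which is a class of $R^\circ$-modules. Two hypotheses must be checked before those theorems apply. First, $\class{A}$ must contain all injective $R^\circ$-modules; this is exactly the content of the Remark following Definition~\ref{def-complete duality pair}. Second, there must exist a \emph{set} $\class{S} \subseteq \class{A}$ such that every $A \in \class{A}$ is a transfinite extension of modules in $\class{S}$. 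This second point is the only genuinely non-formal ingredient, and the step I expect to be the main obstacle: since $\{\class{L},\class{A}\}$ is a symmetric duality pair, $(\class{A},\class{L})$ is itself a duality pair, so by Theorem~\ref{them-duality pair purity}(1) the class $\class{A}$ is closed under pure submodules and pure quotients, and the work of Holm-J\o rgensen~\cite{holm-jorgensen-duality} then shows $\class{A}$ is deconstructible, i.e.\ filtered by a set $\class{S}$ of its own members, as required. Once this is in place, both governing theorems apply verbatim with $\class{B} = \class{A}$.

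Granting that, parts (1) and (2) follow at once. Part (1) is Theorem~\ref{them-models}(1) for $\class{B} = \class{A}$: a cofibrantly generated injective abelian model structure on $R^\circ$-Mod whose fibrant objects are the Gorenstein $\class{A}$-injective $R^\circ$-modules of Definition~\ref{Defs-relative-G-inj}, which by definition is the Gorenstein $\mathfrak{D}$-injective model structure. Part (2) is Theorem~\ref{them-models}(2) for $\class{B} = \class{A}$: a cofibrantly generated projective abelian model structure on $R$-Mod whose cofibrant objects are the projectively coresolved Gorenstein $\class{A}$-flat modules $\class{PGF}_{\class{A}}$ of Definition~\ref{Defs-relative-G-flat-proj}. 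The remaining task is to identify $\class{PGF}_{\class{A}}$ with the Gorenstein $\class{L}$-projective modules $\class{GP}_{\class{L}}$ of Definition~\ref{Defs-relative-G-pro} — a legitimate class since $\class{L}$ contains all projective $R$-modules, again by the Remark following Definition~\ref{def-complete duality pair}. This identification is precisely Theorem~\ref{them-projectivecomplexes}(1): projective modules are pure-projective, so a complex $Q$ of projectives is $\class{A}^{\otimes}$-acyclic if and only if it is $\Hom_R(-,\class{L})$-acyclic, whence $Z_0 Q \in \class{PGF}_{\class{A}}$ exactly when $Z_0 Q \in \class{GP}_{\class{L}}$. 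Thus the two descriptions of the cofibrant objects coincide, and this is the Gorenstein $\mathfrak{D}$-projective model structure.

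For part (3), Theorem~\ref{them-models}(3) (equivalently Proposition~\ref{prop-proj-coresolved-B-flat}) for $\class{B} = \class{A}$ asserts that the Gorenstein $\class{A}$-flat model structure of Theorem~\ref{thm-Gor-flat-mod} exists, is cofibrantly generated, and has the \emph{same} class $\class{V}$ of trivial objects as the projective model structure of part (2). Its cofibrant objects are the Gorenstein $\class{A}$-flat modules $\class{GF}_{\class{A}}$, and its trivially cofibrant objects are $\class{GF}_{\class{A}} \cap \class{V} = \class{F}$, the class of flat modules, as computed in the proof of Proposition~\ref{prop-proj-coresolved-B-flat}. Since the trivial objects of this model structure form the class $\class{V}$ already appearing in part (2), the last assertion follows. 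In summary, apart from the deconstructibility of $\class{A}$ noted in the first paragraph, the whole argument amounts to unwinding definitions and transporting Theorem~\ref{them-models} along the choice $\class{B} = \class{A}$, using Theorem~\ref{them-projectivecomplexes}(1) for the identification in part (2).
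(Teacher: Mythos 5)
Your proposal is correct and follows essentially the same route as the paper: check that $\class{A}$ contains the injectives and, using closure under pure submodules and quotients from Theorem~\ref{them-duality pair purity}, admits a filtering set $\class{S}$, then apply Theorems~\ref{thm-Gor-module} and~\ref{theorem-proj-coresolved-B-flat} with $\class{B}=\class{A}$, identifying the Gorenstein $\class{L}$-projectives with the projectively coresolved Gorenstein $\class{A}$-flats via Theorem~\ref{them-projectivecomplexes}. The only slight imprecision is attributing the deconstructibility itself to Holm--J\o rgensen; their result gives the purity closure, and the passage to a filtering set is the standard argument of~\cite[Prop.~2.8]{bravo-gillespie-hovey}, exactly as the paper cites.
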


\begin{remark}
Each model structure is Quillen equivalent to a model structure on chain complexes as described in Theorems~\ref{thm-Gor-module} and~\ref{prop-proj-coresolved model on complexes}.
\end{remark}

\begin{proof}
Gorenstein $\class{L}$-projective $R$-modules are equivalent to projectively coresolved Gorenstein $\class{A}$-flat $R$-modules by Theorem~\ref{them-projectivecomplexes}. So considering what we have shown in Theorem~\ref{thm-Gor-module}, Theorem~\ref{theorem-projectivemodels}, Theorem~\ref{prop-proj-coresolved model on complexes}, and Theorem~\ref{thm-Gor-flat-mod}, we only need to show that the injective class  $\class{A}$ contains a set $\class{S}$ for which every module in $\class{A}$ is built up as a transfinite extension of modules in $\class{S}$. But $\class{A}$ is closed under pure submodules and pure quotients by Holm and J\o rgensen's Theorem~\ref{them-duality pair purity}.
It follows from a standard argument that there exists a set $\class{S}$ as desired. For example, see~\cite[Prop.~2.8]{bravo-gillespie-hovey}.
\end{proof}

We noted in Theorem~\ref{thm-Gor-module} that $(\mathcal{W}, \mathcal{GI}_{\mathcal{B}})$ is always a perfect cotorsion pair. 
On the other hand, in the context of Proposition~\ref{prop-proj-coresolved-B-flat}, it follows from~\cite[Prop.~2.19]{estrada-iacob-perez-G-flat}  that $(\mathcal{GF}_{\mathcal{B}}, \mathcal{GC}_{\mathcal{B}})$ is always a perfect cotorsion pair. In particular, we get the following corollary. 

\begin{corollary}
Whenever $\mathfrak{D} = (\class{L},\class{A})$ is a semi-complete duality pair, then we have $(\mathcal{W}, \mathcal{GI}_{\mathcal{A}})$ and $(\mathcal{GF}_{\mathcal{A}}, \mathcal{GC}_{\mathcal{A}})$ are each perfect cotorsion pairs. 
\end{corollary}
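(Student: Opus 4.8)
\emph{Proof plan.} The statement is a specialization of the two model-structure theorems already proved in Sections~\ref{sec-relative-G-inj} and~\ref{sec-coresolved}, applied with $\class{B} = \class{A}$, so the plan is simply to check that their hypotheses are met. Both Theorem~\ref{thm-Gor-module} and Theorem~\ref{theorem-proj-coresolved-B-flat} ask for two things: that $\class{A}$ contain all injective $R^\circ$-modules, and that there be a \emph{set} $\class{S} \subseteq \class{A}$ such that each $A \in \class{A}$ is a transfinite extension of modules in $\class{S}$. The first is immediate from the Remark following Definition~\ref{def-complete duality pair}: the injective class of a semi-complete duality pair contains even all FP-injective $R^\circ$-modules, hence all injectives. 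For the second, I would invoke exactly the argument already run in the proof of Corollary~\ref{corollary-models}: since $(\class{A},\class{L})$ is a duality pair, Theorem~\ref{them-duality pair purity}(1) makes $\class{A}$ closed under pure submodules and pure quotients, and a standard deconstructibility argument (for instance \cite[Prop.~2.8]{bravo-gillespie-hovey}) then produces the desired set $\class{S}$.

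Granting this, the first assertion is immediate: Theorem~\ref{thm-Gor-module} with $\class{B} = \class{A}$ states that $(\mathcal{W}, \mathcal{GI}_{\mathcal{A}})$ is a complete hereditary cotorsion pair, and in fact a perfect one. (Internally the perfectness comes from $\mathcal{W}$ being thick, hence closed under direct limits by \cite[Prop.~3.1]{gillespie-ding-modules}, together with \cite[Theorem~7.2.6]{enochs-jenda-book}, but all of this is already packaged inside Theorem~\ref{thm-Gor-module}.)

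For the second pair, I would feed the same deconstructibility of $\class{A}$ into Theorem~\ref{theorem-proj-coresolved-B-flat} with $\class{B} = \class{A}$; one of its stated conclusions is that the Gorenstein $\class{A}$-flat modules are closed under extensions (equivalently, this is the content of Proposition~\ref{prop-proj-coresolved-B-flat} once the special precover hypothesis is supplied). With that closure in hand, Proposition~\ref{G-B-flat} --- i.e.\ \cite[Cor.~2.20]{estrada-iacob-perez-G-flat}, or alternatively \cite[Prop.~2.19]{estrada-iacob-perez-G-flat} for the perfectness directly --- gives that $(\mathcal{GF}_{\mathcal{A}}, \mathcal{GC}_{\mathcal{A}})$ is a perfect hereditary cotorsion pair, cogenerated by a set. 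This finishes both claims.

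There is no genuine obstacle here: the corollary is bookkeeping on top of the Section~\ref{sec-relative-G-inj} and Section~\ref{sec-coresolved} machinery. If any step is the crux, it is the verification that $\class{A}$ admits the set $\class{S}$, because that is the one place where being a semi-complete duality pair --- and in particular Holm and J\o rgensen's purity theorem --- is actually used; everything downstream is a black-box application of results proved earlier in the paper.
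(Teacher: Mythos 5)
Your proposal is correct and follows essentially the same route as the paper: establish that $\class{A}$ contains the injectives and is deconstructible into a set $\class{S}$ (via Holm--J{\o}rgensen purity and the argument of Corollary~\ref{corollary-models}), then quote Theorem~\ref{thm-Gor-module} for the perfectness of $(\class{W},\class{GI}_{\class{A}})$ and the Estrada--Iacob--P\'erez results (Proposition~\ref{G-B-flat}, resp. their Prop.~2.19) together with the extension-closure supplied by Proposition~\ref{prop-proj-coresolved-B-flat}/Theorem~\ref{theorem-proj-coresolved-B-flat} for $(\class{GF}_{\class{A}},\class{GC}_{\class{A}})$. This matches the paper's own (very brief) justification, so nothing further is needed.
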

 
By applying Corollary~\ref{corollary-models} to the duality pair $\mathfrak{D} = (\langle Flat\rangle, \langle Inj \rangle)$ from Proposition~\ref{prop-ding-thing} we get the following theorem.
 
 \begin{theorem}\label{them-dings}
The Ding injective cotorsion pair is a perfect cotorsion pair over any ring $R$. The Ding injectives form the class of fibrant objects of a cofibrantly generated injective abelian model structure on the category of modules over a ring. Therefore its homotopy category is a well-generated triangulated category.     
 \end{theorem}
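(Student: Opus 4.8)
The plan is to obtain Theorem~\ref{them-dings} as a direct application of the general machinery developed in Sections~\ref{sec-relative-G-inj}--\ref{sec-coresolved}, specialized to the semi-complete duality pair $\mathfrak{D} = (\langle Flat\rangle, \langle Inj \rangle)$ of Example~\ref{ques-G-flat}. First I would take $\class{B} = \langle Inj \rangle$, the class of $R^\circ$-modules sitting in a short exact sequence $0 \to A \to B \to M \to 0$ with $A, B$ FP-injective. This class contains all injectives, so it is a legitimate input for the theory. The key point to verify is the set-theoretic hypothesis of Theorem~\ref{thm-Gor-module}: that there is a \emph{set} $\class{S} \subseteq \class{B}$ such that every $B \in \class{B}$ is a transfinite extension of modules in $\class{S}$. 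Since $\mathfrak{D}$ is a semi-complete duality pair, $\class{B} = \langle Inj \rangle$ is closed under pure submodules and pure quotients by Theorem~\ref{them-duality pair purity}(1), and the standard argument (cf.~\cite[Prop.~2.8]{bravo-gillespie-hovey}, as used in the proof of Corollary~\ref{corollary-models}) produces the desired set $\class{S}$. This is exactly Corollary~\ref{corollary-models}(1) applied to $\mathfrak{D}$.

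Next I would invoke Theorem~\ref{thm-Gor-module}: with this $\class{B}$, there is a cofibrantly generated injective abelian model structure on $R^\circ$-Mod whose fibrant objects are the Gorenstein $\langle Inj \rangle$-injective modules, and the cotorsion pair $(\class{W}, \class{GI}_{\class{B}})$ is a \emph{perfect} cotorsion pair. To translate this into the language of Ding injectives, I would apply Proposition~\ref{prop-ding-thing}(1), which says an $R^\circ$-module $N = Z_0E$ is Ding injective if and only if it is Gorenstein $\langle Inj\rangle$-injective. Hence $\class{GI}_{\langle Inj\rangle}$ is precisely the class of Ding injective modules, and the cotorsion pair in question is exactly the Ding injective cotorsion pair. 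Perfectness of a cotorsion pair means in particular that the right-hand class is enveloping, so the Ding injectives form an enveloping class over any ring $R$, and the fibrant objects of the model structure are the Ding injectives. Strictly the theory was stated for ``$R$-modules'' with $\class{B}$ a class of $R^\circ$-modules; here one simply reads the roles with the sides swapped, which is harmless given the conventions of the paper.

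Finally, for the statement about the homotopy category being well-generated, I would cite Corollary~\ref{cor-well-generated}: once we have a cofibrantly generated abelian model structure on the locally presentable pointed category $R^\circ$-Mod, its homotopy category is well-generated by the result of Rosick\'y~\cite{rosicky-brown representability combinatorial model srucs}, and in fact $\textnormal{Ho}(R^\circ\textnormal{-Mod}) \cong \textnormal{St}(\class{GI}_{\langle Inj\rangle})$ is a triangulated equivalence with the stable category of the Frobenius category of Ding injectives. Since the Gorenstein $\langle Inj\rangle$-injective model structure has just been shown to be cofibrantly generated, this applies verbatim.

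I do not anticipate a serious obstacle: the only non-formal ingredient is the verification that $\langle Inj\rangle$ has the required generating set $\class{S}$, and that is already handled by the purity closure properties of duality pairs together with the standard Kaplansky-class argument. The remaining work is bookkeeping: identifying $\class{GI}_{\langle Inj\rangle}$ with the Ding injectives via Proposition~\ref{prop-ding-thing}, and quoting the well-generatedness from Corollary~\ref{cor-well-generated}. If anything is delicate it is making sure the left-right conventions are applied consistently, since the relative theory of Section~\ref{sec-relative-G-inj} puts $\class{B}$ on one side and the modules on the other; but this is purely notational.
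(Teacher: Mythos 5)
Your proposal is correct and follows essentially the same route as the paper: Theorem~\ref{them-dings} is obtained by applying Corollary~\ref{corollary-models} (equivalently Theorem~\ref{thm-Gor-module} with $\class{B}=\langle Inj\rangle$, whose set-theoretic hypothesis is supplied by the purity closure of Theorem~\ref{them-duality pair purity} and the argument of~\cite[Prop.~2.8]{bravo-gillespie-hovey}) to the semi-complete duality pair $(\langle Flat\rangle,\langle Inj\rangle)$, identifying the Gorenstein $\langle Inj\rangle$-injectives with the Ding injectives via Proposition~\ref{prop-ding-thing}(1), and quoting Corollary~\ref{cor-well-generated} for well-generatedness. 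No gaps; the side-convention remark is indeed purely notational.
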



In fact we have proved the following. 

\begin{corollary}\label{cor-n-duality}
Let $R$ be any ring and $n \geq 1$ be a natural number. We have the following special cases of interest, where all classes of modules mentioned are parts of complete cotorsion pairs, and the injective and flat pairs are perfect cotorsion pairs. 
\begin{enumerate}
\item Set $\mathfrak{D}_{\infty} :=(\class{L},\class{A})$ to be the level-absolutely clean duality pair of Example~\ref{example-level}. Then the classes of modules in Corollary~\ref{corollary-models} correspond to the Gorenstein AC-injectives, the Gorenstein AC-projectives, and the Gorenstein AC-flats.

\item For $n \geq 2$, set $\mathfrak{D}_n :=(\class{FP}_n\text{-Flat},\class{FP}_n\text{-Inj})$ to be the Bravo and P\'erez duality pairs of Example~\ref{example-BP}. Then the classes of modules in Corollary~\ref{corollary-models} correspond to what we call Gorenstein $\class{FP}_n$-injective, Gorenstein $\class{FP}_n$-projective, and Gorenstein $\class{FP}_n$-flat modules. 

\item For $n = 1$, set $\mathfrak{D}_1 := (\langle Flat\rangle, \langle Inj \rangle)$ to be the duality pair of Example~\ref{ques-G-flat}.
Then the classes of modules in Corollary~\ref{corollary-models} correspond to the Ding injectives, the projectively coresolved Gorenstein flats, and the usual Gorenstein flats.
\end{enumerate}
\end{corollary}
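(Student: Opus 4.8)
The plan is to feed each of the three duality pairs $\mathfrak{D}_1$, $\mathfrak{D}_n$ ($2 \le n < \infty$), and $\mathfrak{D}_\infty$ into Corollary~\ref{corollary-models}, and then to match the relative Gorenstein classes it produces against the classically named ones. First I would record that each $\mathfrak{D}_n$ is a semi-complete duality pair: for $n = \infty$ this is Example~\ref{example-level}, for $2 \le n < \infty$ it is Example~\ref{example-BP} (via~\cite[Cor.~3.7]{bravo-perez}), and for $n = 1$ it is Example~\ref{ques-G-flat}. Hence Corollary~\ref{corollary-models} applies verbatim in each case, producing the Gorenstein $\mathfrak{D}_n$-injective, Gorenstein $\mathfrak{D}_n$-projective, and Gorenstein $\mathfrak{D}_n$-flat model structures, together with the associated complete cotorsion pairs $(\class{W},\class{GI}_{\class{A}})$, $(\class{GP}_{\class{L}},\class{V}) = (\class{PGF}_{\class{A}},\class{V})$, and $(\class{GF}_{\class{A}},\class{GC}_{\class{A}})$. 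The perfectness of the injective pair is part of Theorem~\ref{thm-Gor-module} (via Proposition~\ref{proposition-relative-G-Inj}), and the perfectness of the flat pair follows from Proposition~\ref{G-B-flat}, once $\class{GF}_{\class{A}}$ is known to be closed under extensions, which is guaranteed by Proposition~\ref{prop-proj-coresolved-B-flat}; see also the corollary immediately following Corollary~\ref{corollary-models}.

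It then remains only to identify the classes. For $n = \infty$ and $2 \le n < \infty$ there is nothing to prove beyond unwinding definitions: taking $\class{A}$ to be the absolutely clean $R^\circ$-modules and $\class{L}$ the level $R$-modules in Definitions~\ref{Defs-relative-G-inj}, \ref{Defs-relative-G-pro}, and~\ref{Defs-relative-G-flat} reproduces exactly the Gorenstein AC-injective, Gorenstein AC-projective, and Gorenstein AC-flat modules of~\cite{bravo-gillespie-hovey}; while for $2 \le n < \infty$ the corresponding classes attached to the $\mathrm{FP}_n$-injective and $\mathrm{FP}_n$-flat classes of~\cite{bravo-perez} are precisely what we are, by fiat, naming the Gorenstein $\class{FP}_n$-injective, Gorenstein $\class{FP}_n$-projective, and Gorenstein $\class{FP}_n$-flat modules.

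For $n = 1$ a short argument is needed. The injective side is handled by Proposition~\ref{prop-ding-thing}(1): the Gorenstein $\langle Inj \rangle$-injective $R^\circ$-modules are precisely the Ding injectives, so Corollary~\ref{corollary-models}(1) yields the perfect Ding injective cotorsion pair and the Ding injective model structure of Theorem~\ref{them-dings} (with well-generated homotopy category by Corollary~\ref{cor-well-generated}). The flat side is Proposition~\ref{prop-ding-thing}(2): the Gorenstein $\langle Inj \rangle$-flat modules are the ordinary Gorenstein flat modules. For the middle, projective term I would observe that a complex of projectives is in particular a complex of flats, so by Proposition~\ref{prop-ding-thing}(2) it is $\langle Inj \rangle^{\otimes}$-acyclic as soon as it remains exact upon tensoring with every injective $R^\circ$-module; thus the \emph{same} complex witnesses that the projectively coresolved Gorenstein $\langle Inj \rangle$-flat modules of Definition~\ref{Defs-relative-G-flat-proj} coincide with {\v{S}}aroch and {\v{S}}\v{t}ov{\'{\i}}{\v{c}}ek's projectively coresolved Gorenstein flat modules, and --- through the identification in Corollary~\ref{corollary-models}(2), which rests on Theorem~\ref{them-projectivecomplexes} --- also with the Gorenstein $\langle Flat \rangle$-projective modules.

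I expect the only genuinely delicate point to be this last identification in case (3), since it relies on the Ding--Mao lemma (\cite[Lemma~2.8]{ding and mao 08}, used through Proposition~\ref{prop-ding-thing}(2)) together with the purity of the defining sequence $0 \to A \to B \to M \to 0$ for $M \in \langle Inj \rangle$; the key is to notice that no new complex of projectives has to be built, since the one already present does the job on both sides. Everything else reduces to bookkeeping: verifying semi-completeness of each $\mathfrak{D}_n$ and invoking Corollary~\ref{corollary-models} together with the perfectness statements already in hand.
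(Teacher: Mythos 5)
Your proposal is correct and follows essentially the same route as the paper, which presents this corollary as a direct assembly of Corollary~\ref{corollary-models}, the perfectness statements (Theorem~\ref{thm-Gor-module} and the corollary following Corollary~\ref{corollary-models}), and the identifications in Proposition~\ref{prop-ding-thing} and Corollary~\ref{cor-ss} for the case $n=1$. Your explicit remark that the same complex of projectives witnesses both the relative and the {\v{S}}aroch--{\v{S}}\v{t}ov{\'{\i}}{\v{c}}ek notions is exactly the content already recorded in Proposition~\ref{prop-ding-thing}(2), so nothing further is needed.
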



\end{document}